\newcommand{\NN}{\mathbb N}
\newcommand{\ZZ}{\mathbb Z}
\def\cA{\mathcal A}
\def\cE{\mathcal E}
\def\cH{\mathcal H}
\def\cO{\mathcal O}
\def\cX{\mathcal X}
\def\l{\ell}
\def\Aut{\mbox{\rm Aut}}
\def\K{\mathbb{K}}
\def\PG{{\rm{PG}}}
\def\det{\mbox{\rm det}}
\def\Aut{\mbox{\rm Aut}}
\def\fqq{{\mathbb F}_{q^2}}
\def\fqs{{\mathbb F}_{q^2}}
\def\fls{{\mathbb F}_{\ell^2}}
\def\fl{{\mathbb F}_{\ell}}
\newcommand{\PGL}{\mbox{\rm PGL}}
\newcommand{\PGU}{\mbox{\rm PGU}}
\newcommand{\SU}{\mbox{\rm SU}}
\newcommand{\U}{\mbox{\rm U}}
\newcommand{\GL}{\mbox{\rm GL}}
\newcommand{\aut}{\mbox{\rm Aut}}
\newcommand{\ha}{{\textstyle\frac{1}{2}}}
\begin{document}

\title{Quotient curves of the GK curve}

\thanks{{\bf Keywords}: Maximal curves, rational points, quotient curve}

\thanks{{\bf Mathematics Subject Classification (2000)}: 11G20}


\author{Stefania Fanali   and
        Massimo Giulietti 
}


\newtheorem{theorem}{Theorem}[section]
\newtheorem{proposition}[theorem]{Proposition}
\newtheorem{lemma}[theorem]{Lemma}
\newtheorem{corollary}[theorem]{Corollary}
\newtheorem{scholium}[theorem]{Scholium}
\newtheorem{remark}[theorem]{Remark}
\newtheorem{observation}[theorem]{Observation}
\newtheorem{assertion}[theorem]{Assertion}
\newtheorem{result}[theorem]{Result}
{\theoremstyle{definition}
\newtheorem*{definition*}{Definition}
\newtheorem{example}[theorem]{Example}
\newtheorem{rem}[theorem]{Remark}
\newtheorem*{proposition*}{Proposition}
\newtheorem*{corollary*}{Corollary}
\newtheorem*{lemma*}{Lemma}

\begin{abstract}
For every $q=\ell^3$
with $\ell$ a prime power greater than $2$, the GK curve $\cX$ is an $\fqq$-maximal curve  that is not
$\fqq$-covered by any $\fqq$-maximal Deligne-Lusztig curve.
Interestingly, $\cX$ has a very
large $\fqq$-automorphism group with respect to its genus.
In this paper we compute the genera of a large variety of curves that are Galois-covered by the GK curve, thus providing several new values in the spectrum of genera of $\fqs$-maximal curves.

\end{abstract}

\maketitle

\section{Introduction}\label{s1}

Let $\fqs$ be a finite field  with $q^2$ elements where $q$ is a
power of a prime $p$. An $\fqq$-rational curve, that is a
projective, geometrically
absolutely irreducible, non-singular algebraic curve defined over
$\fqq$, is called $\fqq$-maximal if the number of its
$\fqq$-rational points attains the  Hasse-Weil upper bound
$$q^2+1+2gq,$$
where $g$ is the genus of the curve. Maximal curves have
interesting properties and have also been investigated for their
applications in Coding theory. Surveys on maximal curves are found
in
\cite{garcia2001sur,garcia2002,garcia-stichtenoth1995ieee,geer2001sur,geer2001nato}
and \cite[Chapter 10]{hirschfeld-korchmaros-torres2008}; see also
\cite{fgt,ft,garcia-stichtenoth2007book,r-sti,sti-x}.

One of the most important problems on maximal curves is the determination of the possible genera of maximal curves over $\fqs$, see e.g. \cite{garcia2001sur}.
For a given $q$, the highest value of $g$ for which an $\fqs$-maximal curve of genus $g$ exists is $q(q-1)/2$ \cite{ihara}, and equality holds if and only if 
the curve is the Hermitian curve with equation $X^{q+1}=Y^q+Y$, up to $\fqs$-birational equivalence \cite{r-sti}. 

By a result of Serre, see \cite[Prop. 6]{lachaud1987}, any $\fqs$-rational curve which is $\fqs$-covered by an $\fqs$-maximal curve is also $\fqs$-maximal.
This has made it possible to obtain several genera of
$\fqs$-maximal curves by applying the Riemann-Hurwitz formula, especially from the Hermitian curve, see   \cite{abdon-quoos2004,abdon-torres1999, abdon-garcia2004, abdon-torres2005, cossidente-korchmaros-torres1999, cossidente-korchmaros-torres2000, garcia-kawakita-miura2006, garciaozbudak2007, garcia-stichtenoth-xing2001, garcia-stichtenoth1999bis,  giulietti-hirschfeld-korchmaros-torres2006, giulietti-hirschfeld-korchmaros-torres2006bis}. Others have been obtained from the
DLS and DLR curves, see  \cite{giulietti-korchmaros-torres2004,cakcak-ozbudak2004,cakcak-ozbudak2005,pasticci2006}.

The problem of the existence of $\fqs$-maximal curves other than  $\fqs$-subcovers of the Hermitian curve, the DLS curve, and the DLR curve  was solved in \cite{segovia1}, where for every $q=\ell^3$ with $\ell=p^r>2$, $p$ prime, an $\fqs$-maximal curve $\cX$ that is not $\fqs$-covered by any $\fqs$-maximal Deligne-Lusztig curve was described. Throughout the paper we will refer to $\cX$ as to the GK curve. It should be noted that the construction in \cite{segovia1} has been recently generalized in \cite{ggs}; it is still an open problem to determine whether these generalizations of the GK curve are $\fqs$-subcovers of a Deligne-Lusztig curve or not.

One of the most interesting features of the GK curve $\cX$ is its very large automorphism group with respect to its genus.
In this paper we consider quotient curves $\cX/L$ under the action of a large variety of subgroups $L$ of $\aut(\cX)$. By applying the Riemann-Hurwitz formula to the covering $\cX\to \cX/L$ a large number of genera of maximal curves is obtained, see Theorems \ref{thmA1}, \ref{thmA2}, \ref{thmB1}, \ref{thmB2}, \ref{thmB3},
\ref{thmC1}, \ref{thmC2}, \ref{thmC3}, \ref{thmC4}, \ref{thmC5}, \ref{7punto1}, \ref{7punto2}, \ref{7punto3}.
It should be noted that when $L$ is tame and contains the centrum $\Lambda$ of $\Aut(\cX)$, then the quotient curve $\cX/L$ has the same genus of the quotient curve of the Hermitian curve $\cH$ over $\fls$ under the action of the factor group $L/\Lambda$, see Corollary \ref{diellecor2}. Apart from these cases, formulas for genera of quotient curves $\cX/L$ appear to provide new values in the spectrum of genera of $\fqs$-maximal curve, cf. Section \ref{elle5}. One of our main tools for the investigation of the tame case  is a relationship between the genus of $\cX/L$ and that of the quotient curve of $\cH$ with respect to the factor group $L/(L\cap \Lambda)$, see Section \ref{section3}.

\section{The GK curve and its automorphism group}
Throughout this paper, $p$ is a prime, $\ell=p^h$ and $q=\ell^3$ with
$h\geq 1$, $\ell>2$. 
Furthermore, $\K$ denotes the algebraic closure of
$\fqq$.

Let
\begin{equation}
\label{segoviaeq} h(X)=\sum_{i=0}^n (-1)^{i+1}X^{i(n-1)}.
\end{equation}

In the three--dimensional projective space $\PG(3,q^2)$ over
$\fqq$, consider the algebraic curve $\cX$ defined to be the
complete intersection of the surface with affine equation
\begin{equation}
\label{segovia2} Z^{\ell^2-\ell+1}=Xh(Y),
\end{equation}
and the Hermitian cone with affine equation
\begin{equation}
\label{segovia4} Y^\ell+Y=X^{\ell+1}.
\end{equation}
Note that $\cX$ is defined over $\fqq$ but it is viewed as a curve
over $\K$. Moreover, $\cX$ has degree $\ell^3+1$ and possesses a
unique infinite point, namely the infinite point $P_\infty$ of the
$Y$-axis.

\begin{theorem}[\cite{segovia1}]
\label{segoviath} $\cX$ is an $\fqq$-maximal curve with genus
  $g=\ha\,(\ell^3+1)(\ell^2-2)+1$.
\end{theorem}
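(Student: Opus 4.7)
The plan is to realize $\cX$ as a tame cyclic cover of degree $m:=\ell^2-\ell+1$ of the Hermitian curve $\cH\colon Y^\ell+Y=X^{\ell+1}$ over $\fls$, deduce $g_{\cX}$ from Riemann--Hurwitz, and then establish $\fqq$-maximality by a fibrewise count of $\fqq$-rational points.

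I would first set up the cover. The projection $\pi\colon\cX\to\cH$, $(X,Y,Z)\mapsto(X,Y)$, corresponds to the function-field extension $\K(\cX)=\K(\cH)(Z)$ with $Z^{m}=Xh(Y)$. The key identity, verified by telescoping from the definition of $h$, is
\[
(Y^\ell+Y)\,h(Y)\;=\;Y^{\ell^2}\mp Y,
\]
and it pins down the divisor of $Xh(Y)$ on $\cH$ from the standard divisors of $X$, $Y^\ell+Y$, and $Y^{\ell^2}\mp Y$. Since $\gcd(m,p)=1$ the extension is a tame Kummer cover, and exhibiting any place of $\cH$ at which $v(Xh(Y))$ is coprime to $m$ both forces $[\K(\cX):\K(\cH)]=m$ and gives geometric irreducibility.

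Next I would apply the Kummer criterion $e_{P'}=m/\gcd(m,v_{P}(Xh(Y)))$ to enumerate the ramified places of $\cH$ in $\pi$. They fall into three families: the $\ell$ affine places with $X=0$, where $v_{P}(X)=1$ and $h(Y)$ is a unit; the $\ell+1$ affine places above each of the $\ell(\ell-1)$ roots of $h$ in $\fls$ (which are precisely $\fls\setminus\{b\colon b^\ell+b=0\}$), where $v_{P}(h(Y))=1$ and $v_{P}(X)=0$; and the unique place $P_\infty$ at infinity, where the standard valuations $v_{P_\infty}(X)=-\ell$ and $v_{P_\infty}(Y)=-(\ell+1)$ give $v_{P_\infty}(Xh(Y))=-\ell^{3}$. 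In each case $v_{P}(Xh(Y))$ is coprime to $m$, so every ramified place is totally ramified with $e=m$. The total number of ramified places is $\ell+\ell(\ell^2-1)+1=\ell^3+1$, and Riemann--Hurwitz together with $g_\cH=\ell(\ell-1)/2$ yields
\[
2g_{\cX}-2\;=\;m(2g_\cH-2)+(\ell^3+1)(m-1),
\]
which simplifies to $g_{\cX}=\tfrac12(\ell^3+1)(\ell^2-2)+1$.

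For $\fqq$-maximality, I would count $|\cX(\fqq)|$ fibrewise over $\cH(\fqq)$: the $\fqq$-rational preimages of $(a,b)\in\cH(\fqq)$ number $1$ when $ah(b)=0$ and otherwise $m$ or $0$ according as $ah(b)$ is or is not an $m$-th power in $\fqq^\times$ (note $\gcd(m,q^{2}-1)=m$, since $\ell^{3}\equiv-1\pmod m$). Summing with the help of the $\fls$-maximality of $\cH$ and a careful accounting of how $ah(b)$ distributes across the cosets of $(\fqq^\times)^{m}$ in $\fqq^\times$ should give exactly $|\cX(\fqq)|=q^{2}+1+2g_{\cX}q$. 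The hard part will be the analysis at $P_\infty$, where all coordinates blow up at different rates and one must verify smoothness of $\cX$ on the chosen projective model while simultaneously extracting the correct ramification index; the $m$-th-power residue bookkeeping in the point count is the other intricate piece, though it cleans up once the fibrewise description is in hand.
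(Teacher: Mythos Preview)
The paper does not give its own proof of this theorem; it is quoted from \cite{segovia1}. Your approach via the degree-$m$ Kummer cover $\pi\colon\cX\to\cH$ is the natural one and is essentially what is done there, and your Riemann--Hurwitz computation of the genus is correct. Two remarks sharpen it. First, the sign in your telescoping identity is determined: $(Y^\ell+Y)h(Y)=Y^{\ell^2}-Y$ in every characteristic. Second, and more usefully, combining this with the Hermitian equation gives, on $\cH$,
\[
Xh(Y)=\frac{Y^{\ell^2}-Y}{X^{\ell}}=\frac{(X^{\ell+1})^{\ell}-X^{\ell+1}}{X^{\ell}}=X^{\ell^2}-X=\prod_{\alpha\in\fls}(X-\alpha),
\]
so the cover is simply $Z^{m}=X^{\ell^2}-X$. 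The ramification locus is then visibly $\cH(\fls)$, each of its $\ell^3+1$ points totally ramified, with no case split needed; this also dissolves your worry about $P_\infty$, since only the function field matters for the genus and $v_{P_\infty}(X^{\ell^2}-X)=-\ell^3$ is coprime to $m$.

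For maximality your outline is right but the decisive step is not there. With the simplification above the fibrewise count reduces to a single claim: for every $a\in\fqq\setminus\fls$ the element $a^{\ell^2}-a$ lies in $(\fqq^{\times})^{m}$. Granting this, one gets at once
\[
|\cX(\fqq)|=(\ell^3+1)+m\bigl(|\cH(\fqq)|-(\ell^3+1)\bigr)=q^{2}+1+2gq,
\]
using the $\fqq$-maximality of $\cH$. But this $m$-th-power statement is not a matter of distribution across cosets---every nonzero value must land in the \emph{trivial} coset---and proving it is exactly the substance of the maximality argument. That is what you still need to supply, not a residue-bookkeeping argument.
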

 For notation,
terminology and basic results on automorphism groups of curves,
we refer to \cite[Chapter 11]{hirschfeld-korchmaros-torres2008}.

For every $u\in \K$, with $u\neq 0$, consider the collineation
$\alpha_u$ of $\PG(3,\fqq)$ defined by
\begin{equation}
\label{lam} \alpha_{u}:\,\,(X,Y,Z,T)\to (uX,uY,Z,uT).
\end{equation}
For $u^{\ell^2-\ell+1}=1$, $\alpha_u$ defines an $\fqq$-automorphism of $\cX$.
For $u\neq 1$, the fixed points of $\alpha_{u}$ are exactly the points
of the plane $\pi_0$ with equation $Z=0$. Since $\pi_0$ contains
no tangent to $\cX$, the number of fixed points of $\alpha_{u}$ with
$u\neq 1$ is independent from $u$ and equal to $\ell^3+1$.
Let $\Lambda=\{\alpha_u|u^{\ell^2-\ell+1}=1\}$.

\begin{theorem}[\cite{segovia1}]
The group $\Lambda$ is a central subgroup of $\aut(\cX)$. The quotient curve $\cX/\Lambda$ is the Hermitian curve $\cH$ over $\fls$ with equation $X^{\ell+1}=Y^\ell+Y$.
The factor group $\aut(\cX)/\Lambda$ is isomorphic to $\PGU(3,\ell)$. 
\end{theorem}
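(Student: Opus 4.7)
The plan is to prove the three claims in order: first identify $\cX/\Lambda$ with $\cH$ by an explicit computation in the function field; then exhibit lifts of the full group $\PGU(3,\ell)=\aut(\cH)$ to $\aut(\cX)$ that visibly commute with $\Lambda$, producing a central extension; and finally argue that no automorphisms of $\cX$ lie outside this extension. The first two steps are nearly formal; the third is where the real work lies.

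For the quotient, I would pass to projective coordinates: $\alpha_u$ sends $(X:Y:Z:T)$ to $(uX:uY:Z:uT)=(X:Y:u^{-1}Z:T)$, so on the affine function field $\K(\cX)=\K(x,y,z)$ the map fixes $x,y$ and sends $z\mapsto u^{-1}z$. Hence $\K(\cX)^{\Lambda}\supseteq\K(x,y)$, and the degree inequality $[\K(x,y,z):\K(x,y)]\le\ell^2-\ell+1=|\Lambda|$ forces equality. The relation satisfied by $x$ and $y$, inherited from \eqref{segovia4}, is precisely the Hermitian equation, so $\cX/\Lambda$ is $\fqq$-isomorphic to $\cH$.

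For the lifts, each $\tau\in\PGU(3,\ell)$ acts $\fls$-linearly on $(X,Y,T)$ and automatically preserves \eqref{segovia4}. To preserve \eqref{segovia2} one checks that the pull-back $\tau^*(Xh(Y))$ equals $\psi_\tau^{\ell^2-\ell+1}\cdot Xh(Y)$ for some $\psi_\tau\in\K(\cH)^*$; the lift $\tilde\tau$ then acts by $\tau$ on $(X,Y,T)$ combined with $Z\mapsto\psi_\tau Z$. Because $\psi_\tau$ depends only on $(X,Y,T)$, each $\tilde\tau$ commutes with every $\alpha_u$, and so the subgroup $G:=\langle\Lambda,\,\tilde\tau:\tau\in\PGU(3,\ell)\rangle\le\aut(\cX)$ is a central extension $1\to\Lambda\to G\to\PGU(3,\ell)\to 1$.

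The main obstacle is to show $\aut(\cX)=G$. The approach I would take is to characterize $\Lambda$ intrinsically: the nontrivial elements of $\Lambda$ fix exactly the set $\cX\cap\{Z=0\}$ of $\ell^3+1$ points, which is the support of the ramification divisor of the Galois cover $\cX\to\cH$. An arbitrary $\gamma\in\aut(\cX)$ must then permute this distinguished point set (either through stabilizer-order considerations, or by appealing to a positive-characteristic bound on $|\aut(\cX)|$ of Henn--Stichtenoth/Nakajima type), so $\gamma\Lambda\gamma^{-1}$ also fixes it pointwise. A detailed analysis of pointwise stabilizers then forces $\gamma\Lambda\gamma^{-1}=\Lambda$, so $\Lambda\trianglelefteq\aut(\cX)$ and $\aut(\cX)/\Lambda$ embeds in $\aut(\cH)=\PGU(3,\ell)$; combined with the surjection from the preceding step, this yields $\aut(\cX)=G$, and centrality of $\Lambda$ in $\aut(\cX)$ follows from its centrality in $G$. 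The technical heart of the proof is precisely this intrinsic characterization of $\Lambda$ together with the accompanying bound on $|\aut(\cX)|$.
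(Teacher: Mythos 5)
First, a point of reference: the present paper does not prove this theorem at all --- it is imported verbatim from \cite{segovia1} --- so your proposal can only be measured against the argument in that source.

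Your first two steps are sound and are essentially the standard ones. The Kummer-theoretic identification $\K(\cX)^{\Lambda}=\K(x,y)$ is correct, and the lifting criterion in step two does work: on $\cH$ the divisor of $xh(y)$ is $\sum_{P\in\cH(\fls)}P-(\ell^3+1)P_\infty$, so for $\tau\in\PGU(3,\ell)$ the divisor of $\tau^{*}(xh(y))/xh(y)$ equals $(\ell^3+1)\bigl(P_\infty-\tau^{-1}(P_\infty)\bigr)$; since $\ell^3+1=(\ell+1)(\ell^2-\ell+1)$ and $(\ell+1)(P-Q)$ is principal for $\fls$-rational $P,Q$ (quotient of tangent lines), the function $\psi_\tau$ you need exists. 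This legitimately produces a central extension $G$ of $\PGU(3,\ell)$ by $\Lambda$ inside $\aut(\cX)$.

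The genuine gap is in step three, and it is not a detail. The claim that every $\gamma\in\aut(\cX)$ permutes $\mathcal{O}_1=\cX\cap\{Z=0\}$ is essentially equivalent to the normality of $\Lambda$, which is in turn the whole content of the determination of $\aut(\cX)$ in \cite{segovia1}; neither device you gesture at delivers it. Henn's classification requires $|\aut(\cX)|\ge 8g^3$, which fails here ($|\aut(\cX)|$ is of order $\ell^{10}$ while $8g^3$ is of order $\ell^{15}$), and the Stichtenoth--Nakajima bounds merely cap $|\aut(\cX)|$ without saying which point sets are preserved; ``stabilizer-order considerations'' presuppose knowledge of the stabilizers, i.e.\ of the group. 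What actually makes the argument work in \cite{segovia1} is the $\fqq$-maximality of $\cX$: it forces every automorphism to commute with the $\fqq$-Frobenius (so $\cX(\fqq)$ is preserved) and forces every non-trivial $p$-element to fix exactly one point; a Sylow analysis then shows that these fixed points form a single $\aut(\cX)$-invariant set of size $\ell^3+1$ coinciding with $\mathcal{O}_1$, and only at that stage does your conjugation argument become available. (Once invariance of $\mathcal{O}_1$ is granted, the rest of your step does close up: the pointwise stabilizer $H$ of $\mathcal{O}_1$ is tame by the unique-fixed-point property, Riemann--Hurwitz gives $|H|\le(\ell^3+1)(\ell^2-1)/(\ell^3-1)<2(\ell^2-\ell+1)$, and $\Lambda\le H$ forces $H=\Lambda$.) As written, your step three restates the difficulty rather than resolving it; the maximality-based fixed-point analysis, or some substitute for it, must be supplied.
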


If the non-degenerate Hermitian form in the three dimensional
vector space $V(3,\ell^2)$ over $\fls$ is given by
$Y^\ell T+YT^\ell-X^{\ell+1}$ then the unitary group $\U(3,\ell)$ is the
subgroup of $\GL(3,\ell^2)$ whose elements $U=(u_{ij})$ are
determined by the condition $U^{t}D\sigma(U)=D$ where $$D=\left(%
\begin{array}{ccc}
  -1 & 0 & 0 \\
  0 & 0 & 1 \\
  0 & 1 & 0 \\
\end{array}%
\right)
$$
and $\sigma(U)=(u_{ij}^\ell)$. $\U(3,\ell)$ has order
$(\ell+1)(\ell^3+1)\ell^2(\ell^2-1)$.
A diagonal matrix $[u,u,u]$ is in $\U(3,\ell)$ if and only if
$u^{\ell+1}=1$, and such matrices form a cyclic subgroup $C$ of
$\U(3,\ell)$.

The (normal) subgroup $\SU(3,\ell)$ is the subgroup of $\U(3,\ell)$ of
index $\ell+1$ consisting of all matrices with determinant $1$.
A set of generators of $\SU(3,\ell)$ are given by the following
matrices:

For $b,c\in \fls$ such that $c^\ell+c-b^{\ell+1}=0$, and for $a\in
\fls,\, a\neq 0$,
    $$ Q_{(b,c)}=
\left( \begin{array}{ccccc} 1 & 0 & b
\\ b^\ell & 1 &   c \\
0 & 0 & 1
\end{array} \right),\,R_a=
\left( \begin{array}{ccccc} a^{-n} & 0 & 0 \\ 0 & a^{n-1} & 0 \\
0 & 0 & a
\end{array} \right),\,
\, S=\left(\begin{array}{ccccc}  0 & 0 & 1 \\ 0 & -1& 0\\
1& 0 & 0
\end{array} \right)\, .
   $$
$\SU(3,\ell)\cap C$
is either trivial or is a subgroup of order $3$, according
as $\gcd(3,\ell+1)=1$ or $3$. 
The center
$Z(\U(3,\ell))$ coincides with $C$, and
$Z(\SU(3,\ell))=\SU(3,\ell)\cap C$. In this context,
$\PGU(3,\ell)=U(3,\ell)/C$.
A
treatise on unitary groups can be found in \cite[Section
10]{taylor}.

{}From each $U\in \U(3,\ell)$ a $(4\times 4)$-matrix $\widetilde{U}$
arises by adding $0,0,1,0$ as a third row and as a third column.
Obviously, these matrices $\widetilde{U}$ with $U\in \SU(3,\ell)$
form a subgroup $\Gamma$ of $\GL(4,\ell^2)$ isomorphic to $\SU(3,\ell)$.
Since the identity matrix is the only scalar matrix in $\Gamma$,
we can regard $\Gamma$ as a projective group in $\PGL(4,\fqq)$.

It is shown in \cite{segovia1} that the group $\Gamma$ preserves $\cX$,
$\Lambda$ centralizes $\Gamma$, and $\Gamma\cap \Lambda$ is
trivial when $\gcd(3,\ell+1)=1$ while it has order $3$ when
$\gcd(3,\ell+1)=3$.
Let $\Lambda_3$ be the unique subgroup of $\Lambda$ of order $\frac{\ell^2-\ell+1}{3}$.
Then by \cite[Lemma 8]{segovia1} $\aut(\cX)$ has a subgroup $\Xi$ with
\begin{equation}
\label{subgroupM} \Xi=
\begin{cases} \Gamma \times \Lambda,\qquad {\mbox{when}}\quad \gcd(3,\ell+1)=1;\\
\Gamma \times \Lambda_3,\qquad {\mbox{when}}\quad
\gcd(3,\ell+1)=3.\\
\end{cases}
\end{equation}
When $\gcd(3,\ell+1)=1$, $\aut(\cX)=\Xi$ holds (see \cite[Thm. 6 (i)]{segovia1}), whereas
for $\gcd(3,\ell+1)=3$, $\cX$ has further $\fqq$-automorphisms. 
Let $\rho$ be a primitive $(\ell^3+1)$-st root of unity in
$\fqq$, and let $\widetilde{E}$ be the diagonal matrix
$[\rho^{-1},\rho^{\ell^2-\ell},1,\rho^{-1}]$. Then $\widetilde{E}$
preserves $\cX$, normalizes $\Gamma$ and commutes with $\Lambda$. Moreover, $\widetilde{E}\notin \Xi$ but 
$\widetilde{E}^3\in \Xi$.
By \cite[Thm. 6 (ii)]{segovia1}, if $\gcd(3,n+1)=3$ then 
 $[\aut(\cX):\Xi]=3$ and $\Xi$ is a
normal subgroup of $\aut(\cX)$. Moreover, $\Gamma$ is a normal subgroup of $\aut(\cX)$.


Both $\Gamma$ and
$\Lambda$ preserve the set of points lying in the plane of
equation $Z=0$.
\begin{theorem}[\cite{segovia1}]
\label{orbits} The set of $\fqq$-rational points of $\cX$ splits
into two orbits under the action of $\aut(\cX)$, one is non-tame,
 has size $\ell^3+1$, and consists of the $\fls$-rational  points on $\cX$; the other is tame of size
$\ell^3(\ell^3+1)(\ell^2-1).$ Furthermore, $\aut(\cX)$ acts on the non-tame
orbit as $\PGU(3,\ell)$ in its doubly transitive permutation
representation.
\end{theorem}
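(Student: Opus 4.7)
The plan is to exhibit two disjoint $\Aut(\cX)$-invariant subsets of $\cX(\fqq)$ of the prescribed sizes whose union is all of $\cX(\fqq)$, and then to deduce transitivity, tameness, and the doubly transitive action. A direct computation using Theorem~\ref{segoviath} yields
\[
|\cX(\fqq)|=q^2+1+2gq=(\ell^3+1)+\ell^3(\ell^3+1)(\ell^2-1),
\]
so once both orbits have been identified they must exhaust $\cX(\fqq)$ and no third orbit can exist.

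For the non-tame orbit I take $\cN:=\cX\cap\{Z=0\}$. By the preamble to the theorem, $|\cN|=\ell^3+1$ and $\Lambda$ fixes $\cN$ pointwise (these are precisely the fixed points of every non-identity $\alpha_u$). Since the Galois cover $\pi:\cX\to\cH=\cX/\Lambda$ has degree $\ell^2-\ell+1$ coprime to $p$ and is therefore tame, $\cN$ must be its full ramification locus, and by cardinality $\pi(\cN)$ coincides with $\cH(\fls)$; moreover a direct check of the defining equations shows that the coordinates of each point of $\cN$ lie in $\fls$. Since $\Aut(\cX)$ preserves the ramification locus, the induced action on $\cN$ factors through $\Aut(\cX)/\Lambda\cong\PGU(3,\ell)$ and coincides with the classical doubly transitive action of $\PGU(3,\ell)$ on the $\ell^3+1$ $\fls$-rational points of the Hermitian unital. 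The stabilizer of such a point in $\PGU(3,\ell)$ is a Borel subgroup whose order is divisible by $\ell^3$, so the $\Aut(\cX)$-stabilizer of any point of $\cN$ contains a nontrivial $p$-subgroup, establishing non-tameness.

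The complementary set $\cX(\fqq)\setminus\cN$ has cardinality $\ell^3(\ell^3+1)(\ell^2-1)$, and its image under $\pi$ lies in $\cH(\fqq)\setminus\cH(\fls)$. The $L$-polynomial of $\cH$ has all inverse roots equal to $-\ell$, so $\cH$ remains maximal over every odd-degree extension of $\fls$, giving $|\cH(\fqq)|=\ell^6+1+\ell^5-\ell^4$ and hence $|\cH(\fqq)\setminus\cH(\fls)|=\ell^3(\ell+1)^2(\ell-1)$, which equals the expected orbit size divided by $|\Lambda|=\ell^2-\ell+1$. Using the classical transitivity of $\PGU(3,\ell)$ on $\cH(\fqq)\setminus\cH(\fls)$, together with the fact that $\Lambda$ acts freely and transitively on each fibre of $\pi$ above $\cH\setminus\cH(\fls)$, one concludes that $\cX(\fqq)\setminus\cN$ is a single $\Aut(\cX)$-orbit, with point-stabilizers of order $\ell^2-\ell+1$ coprime to $p$, hence tame. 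The main obstacle is the transitivity of $\PGU(3,\ell)$ on $\cH(\fqq)\setminus\cH(\fls)$; I would attack this either by invoking the orbit decomposition of unitary groups on their Hermitian unitals, or by directly exhibiting a point-stabilizer as a non-split cyclic torus of order $\ell^2-\ell+1$ and confirming the orbit size via orbit-stabilizer. Once this transitivity is secured, the remaining tameness and cardinality checks are routine.
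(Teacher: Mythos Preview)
The paper does not give its own proof of this theorem: it is quoted verbatim from \cite{segovia1} (the original Giulietti--Korchm\'aros paper) and used as a black box, so there is no argument in the present paper to compare against.

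That said, your outline is sound and essentially reconstructs the proof one would find in \cite{segovia1}. The identification $\cN=\cX\cap\{Z=0\}=\cX(\fls)$, the fact that $\Lambda$ is central so $\Aut(\cX)$ preserves $\cN$, and the factorization of the action through $\Aut(\cX)/\Lambda\cong\PGU(3,\ell)$ are exactly the ingredients used there. Your cardinality bookkeeping via the Hasse--Weil bound and the maximality of $\cH$ over $\fqq$ is correct.

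The one point you flag as the ``main obstacle'' --- transitivity of $\PGU(3,\ell)$ on $\cH(\fqq)\setminus\cH(\fls)$ --- is real but standard, and your proposed fix via the non-split torus is the right one: the stabilizer in $\PGU(3,\ell)$ of a point of $\cH(\fqq)\setminus\cH(\fls)$ is a cyclic Singer-type subgroup of order $\ell^2-\ell+1$ (this is exactly the setup of the groups of type~(B) in the present paper; see \cite[Prop.~4.6]{cossidente-korchmaros-torres1999} or \cite[Lemma~4.1]{cossidente-korchmaros-torres2000}), and orbit--stabilizer then forces a single orbit of the required size. Once that is in hand, your tameness argument via the stabilizer order $\ell^2-\ell+1$ is immediate.
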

Henceforth, the orbit of size $\ell^3+1$ will be denoted as ${\mathcal O}_1$, whereas the orbit of size $\ell^3(\ell^3+1)(\ell^2-1)$ by ${\mathcal O}_2$. Moreover, the natural projection 
from $\aut(\cX)$ to $PGU(3,\ell)$ will be denoted by $\pi$. Let $\phi$ be the rational map $\phi:\cX\to \cH$ defined by $\phi(1:x:y:z)=(1:x:y)$.

For a subgroup $L$ of $\aut(\cX)$, let ${\bar L}$ be the subgroup $\pi(L)$ of $PGU(3,\ell)$. 

Throughout the paper we will refer to the following maximal subgroups, defined up to conjugacy, of the group $PGU(3,\ell)$, viewed as the group of the projectivities of  ${\mathbb P}^2({\mathbb K})$ preserving the Hermitian curve $\cH$.
\begin{itemize}
\item[(A)]  The stabilizer of an $\fls$-rational point, of size $\ell^3(\ell^2-1)$.

\item[(B)] The normalizer of a Singer group, of size $3(\ell^2-\ell+1)$. Here a Singer group of $PGU(3,\ell)$ is a cyclic group of size $\ell^2-\ell+1$ stabilizing a point in $\cH(\fqq)\setminus \cH(\fls)$.

\item[(C)] The self-conjugate triangle stabilizer, of  size $6(\ell+1)^2$.

\item[(D)] The non-tangent line stabilizer, of size $\ell(\ell+1)(\ell^2-1)$.

\end{itemize}

\section{Preliminary results}\label{section3}

Let $L$ be a subgroup of $\aut(\cX)$. Let $\cX/L$ be the quotient curve of $\cX$ with respect to $L$, and let $g_L$ be its genus. If $L$ is tame, that is $p$ does not divide the order of $L$, then the Hurwitz genus formula for Galois extensions gives
\begin{equation}\label{HurTame}
(\ell^3+1)(\ell^2-2)=|L|(2g_L-2)+e_L
\end{equation}
with
\begin{equation}\label{HurTame2}
e_L=\sum_{P\in \cX}(|L_P|-1),
\end{equation}
where $L_P$ is the stabilizer of $P$ in $L$. The aim of this section is to provide a formula which relates $e_L$ to the action of ${\bar L}$ on the Hermitian curve $\cH$, see Proposition \ref{dielleprop} below.
Let $L_\Lambda=L\cap \Lambda$. The factor group $L/L_\Lambda$ is isomorphic to ${\bar L}$, and the action of $L/L_\Lambda$ on the orbits of $\cX$ under $\Lambda$ is isomorphic to that of ${\bar L}$ on $\cH$.

As to the relation between $e_L$ and the analogous value $
\sum_{\bar P \in \cH}(|\bar L_{\bar P}|-1),
$ for $\bar L$, 
by standard arguments from permutation group theory it is not difficult to prove that
\begin{equation}\label{bru}
\sum_{P\in \cX}(|L_P|-1)\cdot m_P=|L_\Lambda|\left(\sum_{\bar P \in \cH}(|\bar L_{\bar P}|-1)\cdot |\phi^{-1}(\bar P)|\right)-\sum_{P\in \cX}(m_P-|L_\Lambda|),
\end{equation}
where $m_P$ denotes the size of the orbit of $P$ under the action of the subgroup of $L$ stabilizing the set $\phi^{-1}(\phi(P))$.
However, we will not use \eqref{bru}, as this would require involved computations on $m(P)$. As it has emerged from the literature, a more adequate approach is based on the equality
\begin{equation}\label{dielle}
e_L=\sum_{h\in L,h\neq id}N_{h},\qquad \text{where } N_h=|\{P\in \cX\mid h(P)=P\}|
\end{equation}
(cf. \cite[Eq. 4.7]{garcia-stichtenoth-xing2001}).

The ramification points of the morphism $\phi:\cX\to \cH$ are exactly the points in ${\mathcal O}_1$. At these points $\phi$ is fully ramified. The set ${\bar {\mathcal O}_1}$ of the images of the points in ${\mathcal O}_1$ by $\phi$ in $\cH$ is precisely the set $\cH(\fls)$ of the $\fls$-rational points of $\cH$, whereas the image ${{\bar \cO}}_2$  of  ${\mathcal O}_2$ by $\phi$
coincides with  $ \cH(\fqs)\setminus \cH(\fls)$. Any point of $\cH$ fixed by some non-trivial element in $PGU(3,\ell)$ lies in ${{\bar \cO}}_1\cup {{\bar \cO}}_2$, see e.g. \cite[Prop. 2.2]{garcia-stichtenoth-xing2001}.

In order to compute $e_L$ as in \eqref{dielle}, it is convenient to write
\begin{equation}\label{separ}
N_{h}=N_{h}^{(1)}+N_{h}^{(2)}
\end{equation}
 with 
$$
 N_{h}^{(1)}=|\{P\in {\mathcal O}_1\mid h(P)=P\}|,\quad N_{h}^{(2)}=|\{P\in {\mathcal O}_2\mid h(P)=P\}|.
$$
\begin{proposition}\label{dielleprop} Let $L$ be a subgroup of $\aut(\cX)$, and let $L_\Lambda=L\cap \Lambda$. Let $\bar {\mathcal O}_1=\cH(\fls)$ and 
$\bar {\mathcal O}_2=\cH(\fqs)\setminus \cH(\fls)$.
Then
$$
e_L=(|L_\Lambda|-1)(\ell^3+1)+|L_\Lambda|n_1+|L_\Lambda|n_2
$$
where 
\begin{itemize}
\item $n_1$ counts the non-trivial relations $\bar h(\bar P)=\bar P$ with $\bar h \in \bar L$ when $\bar P$ varies in $\bar O_1$, namely $$n_1=\sum_{{\bar h}\in {\bar L}, {\bar h}\neq id}|\{{\bar P}\in \bar {\mathcal O}_1\mid {\bar h}({\bar P})={\bar P} \}|;$$
\item $n_2$ counts the non-trivial relations $\bar h(\bar P)=\bar P$ with $\bar h \in \bar L$ when $\bar P$ varies in $\bar O_2$, each counted with a multiplicity
$l_{{\bar h},\bar P}$ defined as the number of orbits of $\phi^{-1}(\bar P)$ under the action of $L_\Lambda$ that are fixed by an element $h\in \pi^{-1}(\bar h) $. That is, 
$$n_2=|L_\Lambda|\sum_{{\bar h}\in {\bar L}, {\bar h}\neq id}\,\,\,\sum_{{\bar P}\in \bar {\mathcal O}_2, {\bar h}({\bar P})={\bar P}}\,\,\,l_{{\bar h},{\bar P}}.$$
\end{itemize}
\end{proposition}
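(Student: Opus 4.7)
The plan is to compute $e_L=\sum_{h\in L, h\neq id} N_h=\sum_h(N_h^{(1)}+N_h^{(2)})$ by partitioning $L\setminus\{id\}$ into $L_\Lambda\setminus\{id\}$ and $L\setminus L_\Lambda$, then summing contributions from each of $\cO_1$ and $\cO_2$.

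First I would dispose of the diagonal contribution. Every non-trivial $\alpha_u\in\Lambda$ fixes exactly the $\ell^3+1$ points of $\cX$ lying in $\pi_0\colon Z=0$, which are precisely the points of $\cO_1$ (they agree in cardinality and $\cO_1$ consists of the $\fls$-rational points that lie in $\pi_0$). Hence for $h\in L_\Lambda\setminus\{id\}$ we have $N_h^{(1)}=\ell^3+1$ and $N_h^{(2)}=0$, contributing $(|L_\Lambda|-1)(\ell^3+1)$.

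Next I handle $\cO_1$ for $h\in L\setminus L_\Lambda$. Since $\phi$ is totally ramified over $\bar\cO_1$, the restriction $\phi\colon\cO_1\to\bar\cO_1$ is a bijection; and since $\Lambda$ is central in $\aut(\cX)$, elements of $L$ descend to an action on $\bar\cO_1$ agreeing with that of $\bar L$ via $\pi$. Therefore $h(P)=P$ for $P\in\cO_1$ is equivalent to $\bar h(\bar P)=\bar P$, giving $N_h^{(1)}=|\{\bar P\in\bar\cO_1:\bar h(\bar P)=\bar P\}|$. Grouping the elements of $L\setminus L_\Lambda$ into the $|L_\Lambda|$-sized fibers of $\pi$ over $\bar L\setminus\{id\}$ yields the contribution $|L_\Lambda|\,n_1$.

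The technical core is the $\cO_2$ contribution for $h\in L\setminus L_\Lambda$, where I switch the order of summation to write
\begin{equation*}
\sum_{h\in L\setminus L_\Lambda} N_h^{(2)}=\sum_{\bar h\in \bar L,\,\bar h\neq id}\,\sum_{\bar P\in\bar\cO_2,\,\bar h(\bar P)=\bar P}\,\sum_{h\in \pi^{-1}(\bar h)\cap L}\bigl|\{P\in\phi^{-1}(\bar P):h(P)=P\}\bigr|.
\end{equation*}
Here I use that no non-trivial element of $\Lambda$ fixes a point of $\cO_2$ (since the fixed locus of any $\alpha_u\neq id$ is $\pi_0\cap\cX=\cO_1$), so $L_\Lambda$ acts freely on each fiber $\phi^{-1}(\bar P)$ with $\bar P\in\bar\cO_2$, partitioning it into $L_\Lambda$-orbits of size $|L_\Lambda|$. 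The key observation, exploiting the centrality of $\Lambda$, is that if $h(P_0)=P_0$ then $h(\lambda P_0)=\lambda h(P_0)=\lambda P_0$ for every $\lambda\in L_\Lambda$, so the $h$-fixed locus inside $\phi^{-1}(\bar P)$ is a union of entire $L_\Lambda$-orbits.

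The main obstacle is to count, for fixed $\bar h$ and $\bar P$, how the innermost double sum over $h\in\pi^{-1}(\bar h)\cap L$ and $P$ combines. For a given $L_\Lambda$-orbit $O\subset\phi^{-1}(\bar P)$ preserved setwise by some $h\in\pi^{-1}(\bar h)\cap L$, write $h(P_0)=\lambda_1 P_0$; then $\lambda_1^{-1}h\in\pi^{-1}(\bar h)\cap L$ fixes $P_0$, hence fixes $O$ pointwise. Conversely, the free $L_\Lambda$-action forces such a fixing element to be unique within $\pi^{-1}(\bar h)\cap L$: if $h_1,h_2$ both fix $P_0$, then $h_1h_2^{-1}\in L_\Lambda$ fixes $P_0$ and so equals $id$. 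Moreover, whether $O$ is preserved setwise is a property of the coset $\pi^{-1}(\bar h)\cap L$ itself, since $L_\Lambda$ preserves every orbit. Consequently each setwise-preserved orbit $O$ contributes exactly $|L_\Lambda|$ to the innermost double sum (one element $h$ fixing $|L_\Lambda|$ points of $O$) and each non-preserved orbit contributes $0$. This gives
\begin{equation*}
\sum_{h\in\pi^{-1}(\bar h)\cap L}\bigl|\{P\in\phi^{-1}(\bar P):h(P)=P\}\bigr|=|L_\Lambda|\,l_{\bar h,\bar P},
\end{equation*}
and summing over $\bar h$ and $\bar P$ produces the term $|L_\Lambda|\,n_2$. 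Adding the three contributions yields the stated formula.
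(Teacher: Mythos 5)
Your proposal is correct and follows essentially the same route as the paper: the same splitting of the sum over $L\setminus\{id\}$ into the $L_\Lambda$-part and the cosets over $\bar L\setminus\{id\}$, the same treatment of the $\cO_1$ and $\cO_2$ contributions. Your careful analysis of the coset action on the $L_\Lambda$-orbits of a fiber over $\bar\cO_2$ is just a fully spelled-out version of the step the paper dispatches with a one-line appeal to the orbit-stabilizer theorem.
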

\begin{proof}
Note that
\begin{equation}\label{mah}
\sum_{h\in L,h\neq id}N_{h}=\sum_{k\in L_\Lambda, k\neq id}N_k+\sum_{\bar h \in \bar L, \bar h \neq id}\,\,\,\sum_{k\in L_\Lambda}N_{hk},
\end{equation}
where $h\in L$ is an element such that $\pi(h)=\bar h$.
For each non-trivial element $k\in L_\Lambda$,  $N_k=|{\mathcal O}_1|=(\ell^3+1)$ holds. Therefore 
\begin{equation}\label{boh}
\sum_{k\in L_\Lambda, k\neq id}N_k=(|L_\Lambda|-1)(\ell^3+1).
\end{equation}
As to the second term in the right hand side of \eqref{mah}, 
write 
$
N_{hk}=N_{hk}^{(1)}+N_{hk}^{(2)},
$
with $N_{hk}^{(i)}$ as in \eqref{separ}.
As $k(P)=P$ for each $P\in {\mathcal O}_1$, we have
\begin{equation}\label{prim}
\sum_{\bar h \in \bar L, \bar h \neq id}\,\,\,\sum_{k\in L_\Lambda} N_{hk}^{(1)}= |L_\Lambda|\sum_{\bar h \in \bar L, \bar h \neq id}|\{\bar P \in \bar {\mathcal O}_1 \mid \bar h(\bar P)=\bar P\}|.
\end{equation}
It remains to compute $\sum_{\bar h \in \bar L, \bar h \neq id}\sum_{k\in L_\Lambda} N_{hk}^{(2)}$. Since $\phi(k(P))=\phi(P)$ for each $P\in \cX$, condition $(hk)(P)=P$ yields that $\bar h(\phi(P))=\phi(P)$. Therefore, 
$$
\sum_{\bar h \in \bar L, \bar h \neq id}\,\,\,\sum_{k\in L_\Lambda} N_{hk}^{(2)}=\sum_{\bar h \in \bar L, \bar h \neq id}\,\,\,\sum_{\bar P \in \bar {\mathcal O}_2,\bar h(\bar P)=\bar P}\,\,\,\sum_{k\in L_\Lambda}m_{k,\bar h,\bar P}
$$
where $m_{k,\bar h,\bar P}=|\{P \in \cX, \pi(P)=\bar P, (hk)(P)=P\}|$. By the orbit-stabilizer theorem
$
\sum_{k\in L_\Lambda}m_{k,\bar h,\bar P}=|L_\Lambda|l_{\bar h,\bar P},
$
whence
$$\sum_{\bar h \in \bar L, \bar h \neq id}\,\,\,\sum_{k\in L_\Lambda} N_{hk}^{(2)}=|L_\Lambda|\sum_{{\bar h}\in {\bar L}, {\bar h}\neq id}\,\,\,\sum_{{\bar P}\in \bar {\mathcal O}_2, {\bar h}({\bar P})={\bar P}}l_{{\bar h},{\bar P}}.$$
Taking into account \eqref{dielle},\eqref{mah},\eqref{boh},\eqref{prim}, this finishes the proof.
\end{proof}
The following corollary to Proposition \ref{dielleprop} will be useful in the sequel.
\begin{proposition}\label{dielleprop2}
Let $L$ be a tame subgroup of $\aut(\cX)$. Assume that no non-trivial element in $\bar L$ fixes a point in $\cH\setminus \bar {\mathcal O}_1$. Then
$$
g_L=g_{\bar L}+\frac{(\ell^3+1)(\ell^2-|L_\Lambda|-1)-|L_\Lambda|(\ell^2-\ell-2)}{2| L|},
$$
where $g_{\bar L}$ is the genus of the quotient curve $\cH/{\bar L}$.
\end{proposition}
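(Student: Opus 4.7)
The plan is to combine two applications of the Hurwitz genus formula — one for the covering $\cX\to \cX/L$ and one for $\cH\to \cH/\bar L$ — after using Proposition \ref{dielleprop} to express $e_L$ entirely in terms of data for $\bar L$.

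First, I would apply \eqref{HurTame} to the tame covering $\cX\to\cX/L$ to get
$$(\ell^3+1)(\ell^2-2)=|L|(2g_L-2)+e_L,$$
and, since $\cH$ has genus $\ell(\ell-1)/2$ and $\bar L$ is automatically tame (its order divides $|L|$), apply the analogous formula to $\cH\to\cH/\bar L$ to obtain
$$\ell^2-\ell-2=|\bar L|(2g_{\bar L}-2)+e_{\bar L},$$
where $e_{\bar L}=\sum_{\bar h\in \bar L,\ \bar h\neq id}|\{\bar P\in\cH\mid \bar h(\bar P)=\bar P\}|$ by the standard double-counting argument underlying \eqref{dielle}.

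Next I would invoke the hypothesis: since no non-trivial element of $\bar L$ fixes a point of $\cH\setminus\bar{\mathcal O}_1$, and since by the excerpt any $\cH$-point fixed by a non-trivial element of $PGU(3,\ell)$ lies in $\bar{\mathcal O}_1\cup\bar{\mathcal O}_2$, all fixed points of non-trivial elements of $\bar L$ on $\cH$ lie in $\bar{\mathcal O}_1$. This immediately gives $e_{\bar L}=n_1$ and $n_2=0$ in the notation of Proposition \ref{dielleprop}. Substituting into that proposition yields
$$e_L=(|L_\Lambda|-1)(\ell^3+1)+|L_\Lambda|\,e_{\bar L}.$$

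Finally, I would plug this identity into the Hurwitz formula for $\cX\to\cX/L$, use $|L|=|\bar L|\cdot|L_\Lambda|$, and eliminate $e_{\bar L}$ via the Hurwitz formula for $\cH\to\cH/\bar L$. A direct computation gives
$$g_L-g_{\bar L}=\frac{(\ell^3+1)(\ell^2-2)-e_L-|L_\Lambda|\bigl(\ell^2-\ell-2-e_{\bar L}\bigr)}{2|L|},$$
and after substituting the expression for $e_L$ the $|L_\Lambda|\,e_{\bar L}$ terms cancel, leaving exactly the stated numerator $(\ell^3+1)(\ell^2-|L_\Lambda|-1)-|L_\Lambda|(\ell^2-\ell-2)$. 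The only conceptual step is checking that $e_{\bar L}=n_1$ (i.e.\ that the hypothesis forces \emph{all} non-trivial $\bar L$-fixed points of $\cH$ into $\bar{\mathcal O}_1$); the rest is bookkeeping, so I do not anticipate a serious obstacle once Proposition \ref{dielleprop} is in hand.
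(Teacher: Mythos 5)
Your proposal is correct and follows essentially the same route as the paper: both combine the tame Hurwitz formula for $\cX\to\cX/L$ with Proposition \ref{dielleprop} (where the hypothesis kills the $n_2$ term), and then eliminate the fixed-point count $n_1=e_{\bar L}$ via the Hurwitz formula for $\cH\to\cH/\bar L$. The bookkeeping you describe is exactly the paper's "straightforward computation."
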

\begin{proof} By \eqref{HurTame} and Proposition \ref{dielleprop},
$$
(\ell^3+1)(\ell^2-2)=|L|(2g_L-2)+(|L_\Lambda|-1)(\ell^3+1)+|L_\Lambda|\sum_{{\bar h}\in {\bar L}, {\bar h}\neq id}|\{{\bar P}\in \bar {\mathcal O}_1\mid {\bar h}({\bar P})={\bar P} \}|.
$$
On the other hand, by the Hurwitz genus formula applied to the covering $\cH\to \cH/\bar L$
$$
\sum_{{\bar h}\in {\bar L}, {\bar h}\neq id}|\{{\bar P}\in \bar {\mathcal O}_1\mid {\bar h}({\bar P})={\bar P} \}|=(\ell^2-\ell-2)-|\bar L|(2g_{\bar L}-2), 
$$
whence
$$
(\ell^3+1)(\ell^2-2)=|L|(2g_L-2)+(|L_\Lambda|-1)(\ell^3+1)+|L_\Lambda|(\ell^2-\ell-2)-|L|(2g_{\bar L}-2).
$$
Then the claim follows by straightforward computation.
\end{proof}

When $\Lambda \subseteq L$, $l_{\bar h, \bar P}=1$ for every $\bar h\in \bar L $, and for every $\bar P \in \bar {\mathcal O}_2$ with $\bar h(\bar P)=\bar P$. Therefore Proposition \ref{dielleprop} reads as follows.
\begin{corollary}\label{diellecor}
Let $L$ be a subgroup of $\aut(\cX)$ containing $\Lambda$. 
Then
$$
e_L=(\ell^2-\ell)(\ell^3+1)+(\ell^2-\ell+1)\sum_{{\bar h}\in {\bar L}, {\bar h}\neq id}|\{{\bar P}\in \bar \cH(\fqs)\mid {\bar h}({\bar P})={\bar P} \}|.
$$
\end{corollary}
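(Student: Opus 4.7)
The plan is to specialize Proposition \ref{dielleprop} under the extra hypothesis $\Lambda\subseteq L$, which forces $L_\Lambda=L\cap\Lambda=\Lambda$ and hence $|L_\Lambda|=\ell^{2}-\ell+1$. Substituting this into the term $(|L_\Lambda|-1)(\ell^{3}+1)$ of the proposition already yields the first summand $(\ell^{2}-\ell)(\ell^{3}+1)$ of the claim, so the real work is to simplify the contribution from $n_{1}$ and $n_{2}$ coming from the non-trivial cosets $\bar h \in \bar L$.

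The key step is to show that the multiplicity $l_{\bar h,\bar P}$ appearing in the definition of $n_{2}$ always equals $1$. Because $\phi:\cX\to\cH$ is the quotient morphism by $\Lambda$, and is unramified over $\bar{\mathcal O}_{2}$ (the ramification locus is exactly $\mathcal{O}_1$), every fibre $\phi^{-1}(\bar P)$ with $\bar P\in\bar{\mathcal O}_{2}$ has size $\ell^{2}-\ell+1$ and forms a single $\Lambda=L_\Lambda$-orbit. For any $h\in\pi^{-1}(\bar h)\cap L$ with $\bar h(\bar P)=\bar P$, equivariance of $\phi$ forces $h$ to preserve this fibre setwise, and therefore to fix the unique $L_\Lambda$-orbit it contains. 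Hence $l_{\bar h,\bar P}=1$ for every pair $(\bar h,\bar P)$ contributing to $n_{2}$.

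Once this is established, both $n_{1}$ and $n_{2}$ reduce to pure fixed-point counts of $\bar h$ on $\bar{\mathcal O}_{1}$ and $\bar{\mathcal O}_{2}$ respectively. Using the fact, recalled just before Proposition \ref{dielleprop}, that any point of $\cH$ fixed by a non-trivial element of $\PGU(3,\ell)$ lies in $\bar{\mathcal O}_{1}\cup\bar{\mathcal O}_{2}=\cH(\fqs)$, the two sums merge into the single sum $\sum_{\bar h\neq id}|\{\bar P\in\cH(\fqs):\bar h(\bar P)=\bar P\}|$ appearing in the statement, and the common factor $|L_\Lambda|=\ell^{2}-\ell+1$ is pulled out in front as required. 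The only mildly delicate point is the verification that $l_{\bar h,\bar P}=1$, and even this is essentially a bookkeeping consequence of the Galois nature of $\phi$; no substantial obstacle is expected.
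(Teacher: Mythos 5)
Your proposal is correct and follows exactly the paper's route: the paper derives the corollary from Proposition \ref{dielleprop} by the single observation that $\Lambda\subseteq L$ forces $L_\Lambda=\Lambda$ and $l_{\bar h,\bar P}=1$ (each fibre over $\bar{\mathcal O}_2$ being one $\Lambda$-orbit), which is precisely the argument you spell out. Your justification of $l_{\bar h,\bar P}=1$ via the unramifiedness of $\phi$ over $\bar{\mathcal O}_2$ is the right one, and the merging of the two sums is immediate since $\bar{\mathcal O}_1\cup\bar{\mathcal O}_2=\cH(\fqs)$ by definition.
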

We end this section with a result showing that if $L$ is tame and $\Lambda\subset L$, then the genus of $g_L$ is actually  the genus of a quotient curve of the Hermitian curve $\cH$.
\begin{corollary}\label{diellecor2}
Let $L$ be a tame subgroup of $\aut(\cX)$ containing $\Lambda$. 
Then $g_L$ coincides with the genus of the quotient curve $\cH/\bar L$.
\end{corollary}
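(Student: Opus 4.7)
The plan is to combine the formula for $e_L$ from Corollary \ref{diellecor} with the tame Hurwitz genus formula \eqref{HurTame} applied to the covering $\cX\to \cX/L$, and compare the result with the Hurwitz formula applied to the covering $\cH\to \cH/\bar L$. Since $\Lambda\subseteq L$, the intersection $L_\Lambda$ equals $\Lambda$, so $|L_\Lambda|=\ell^2-\ell+1$ and $|L|=(\ell^2-\ell+1)|\bar L|$.

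First, I would observe that every point of $\cH$ fixed by a non-trivial element of $\bar L\subseteq\PGU(3,\ell)$ lies in $\cH(\fqs)=\bar{\mathcal O}_1\cup \bar{\mathcal O}_2$, as recalled in Section \ref{section3}. Hence the ramification divisor degree for the tame quotient $\cH\to\cH/\bar L$ equals
$$e_{\bar L}=\sum_{\bar h\in\bar L,\bar h\neq id}|\{\bar P\in \cH(\fqs)\mid \bar h(\bar P)=\bar P\}|,$$
and the Hurwitz formula gives $\ell^2-\ell-2=|\bar L|(2g_{\bar L}-2)+e_{\bar L}$, using $2g_\cH-2=\ell^2-\ell-2$.

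Next, substituting the expression from Corollary \ref{diellecor} into the Hurwitz formula for $\cX\to\cX/L$ yields
$$(\ell^3+1)(\ell^2-2)=|L|(2g_L-2)+(\ell^2-\ell)(\ell^3+1)+(\ell^2-\ell+1)e_{\bar L}.$$
Rearranging,
$$(\ell^3+1)(\ell-2)=|L|(2g_L-2)+(\ell^2-\ell+1)e_{\bar L}.$$
Using $\ell^3+1=(\ell+1)(\ell^2-\ell+1)$ and $|L|=(\ell^2-\ell+1)|\bar L|$, one divides through by $\ell^2-\ell+1$ to obtain
$$\ell^2-\ell-2=|\bar L|(2g_L-2)+e_{\bar L}.$$

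Comparing this identity with the Hurwitz formula for $\cH\to\cH/\bar L$ displayed above, both right-hand sides share the term $e_{\bar L}$, so $|\bar L|(2g_L-2)=|\bar L|(2g_{\bar L}-2)$ and therefore $g_L=g_{\bar L}$, as required. The only nontrivial step is the identification of the ramification term $e_{\bar L}$ in the two formulas; once the arithmetic simplification $(\ell^3+1)(\ell-2)/(\ell^2-\ell+1)=\ell^2-\ell-2$ is made explicit, the result is immediate. In particular, no delicate local analysis of ramification at individual points of $\cX$ is needed beyond what is already packaged in Corollary \ref{diellecor}.
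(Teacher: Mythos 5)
Your proof is correct and follows essentially the same route as the paper: substitute Corollary \ref{diellecor} into \eqref{HurTame}, compare with the tame Hurwitz formula for $\cH\to\cH/\bar L$, and cancel the common ramification term after dividing by $\ell^2-\ell+1$. The paper leaves the arithmetic as a "straightforward computation"; you have simply carried it out explicitly, including the needed observation that all fixed points of non-trivial elements of $\bar L$ lie in $\cH(\fqs)$.
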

\begin{proof}
Let $g_\cH=\ell(\ell-1)/2$ be the genus of $\cH$, and let $g_{\bar L}$ be the genus of the quotient curve $\cH/{\bar L}$. Then by straightforward computation
\begin{equation}\label{cavendish1}
(\ell^3+1)(\ell^2-2)=(\ell^2-\ell+1)(2g_\cH-2)+(\ell^2-\ell)(\ell^3+1).
\end{equation}
Since ${\bar L}$ is tame, 
\begin{equation}\label{cavendish2}
2g_\cH-2= |\bar L|(2g_{\bar L}-2)+\sum_{{\bar h}\in {\bar L}, {\bar h}\neq id}|\{{\bar P}\in \bar \cH(\fqs)\mid {\bar h}({\bar P})={\bar P} \}|.
\end{equation}
Note that $|L|=(\ell^2-\ell+1)|\bar L|$. Taking into account \eqref{HurTame} and Corollary \ref{diellecor}, $g_L=g_{\bar L}$ follows by straightforward computation.
\end{proof}

%

\section{Curves $\cX/L$ with ${\bar L}$ subgroup of a group of type (A)}

In this section  subgroups $L$ of $\aut(\cX)$ stabilizing a point $P\in {\mathcal O}_1$ are investigated. Up to conjugacy, we may assume that $L$ is contained in the stabilizer $\aut(\cX)_{P_\infty}$ of  $P_\infty$ in $\aut(\cX)$. By the orbit-stabilizer theorem the size of $\aut(\cX)_{P_\infty}$ is $\ell^3(\ell^2-1)(\ell^2-\ell+1)$. Since $\aut(\cX)_{P_\infty}$ is non-tame,  in order to determine the genus of $\cX/L$  we will use Hilbert's ramification theory, see \cite[Ch. III.8]{stichtenothbook}.

Let $G$ be a subgroup of $\aut(\cX)$ and let $P$ be a point of $\cX$.  For an integer $i\ge -1$ the $i$-th ramification group of $G$ at $P$ is
$$
G_i(P)=\{h \in G\mid v_{P_\infty}(h^\star(t)-t)\ge i+1\},
$$
where $h^\star\in \aut(\K(\cX))$ is the pullback of $h$, $v_{P}$ is the discrete valuation of $\K(\cX)$ associated to $P$, and $t$ is any $P$-prime element. The group $G_0(P)$  coincides with the  stabilizer $\aut(\cX)_{P}$, whereas $G_1(P)$ is the only $p$-Sylow subgroup of $G_0(P)$, see e.g. 
\cite[Prop. III.8.5]{stichtenothbook}. Moreover, there exists a cyclic group $H$ in $G_0(P)$ such that $G_0(P)=G_1(P)\rtimes H$, see \cite[Thm. 11.49]{hirschfeld-korchmaros-torres2008}.
The Hurwitz genus formula together with the Hilbert different formula (see e.g. \cite[Thm. III.8.7]{stichtenothbook}) gives
\begin{equation}\label{huhi}
2g-2=|G|(2g_G-2)+\sum_{P\in \cX}d_P,\qquad \text{with }d_P=\sum_{i=0}^{\infty}(|G_i(P)|-1),
\end{equation}
where $g_G$ denotes the genus of the quotient curve $\cX/G$. 

Assume that $G=G_0(P_\infty)$, that is every element in $G$ fixes $P_\infty$. Since $\cX$ is a maximal curve,  no $p$-element in $G$ can fix a point $P$ different from $P_\infty$, see \cite[Thm. 9.76]{hirschfeld-korchmaros-torres2008} and \cite[Thm. 11.133]{hirschfeld-korchmaros-torres2008}. Therefore for any $P\neq P_\infty$ the integer $d_P$ in \eqref{huhi} coincides with $G_0(P)-1$. The following result then holds.

\begin{lemma}\label{stab11} For a subgroup $L$ of $\aut(\cX)_{P_\infty}$,  let $g_L$ be the genus of the quotient curve $\cX/L$. Then
$$(\ell^3+1)(\ell^2-2)=|L|(2g_L-2)+e_L+\sum_{i=1}^{\infty}(|L_i(P_\infty)|-1),$$
with $e_L$ as in \eqref{HurTame2}.
\end{lemma}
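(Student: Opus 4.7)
The plan is to apply the Hurwitz genus formula combined with the Hilbert different formula \eqref{huhi} to the Galois covering $\cX\to \cX/L$, and then to simplify the ramification sum using the hypothesis that $L\subseteq \aut(\cX)_{P_\infty}$. The left-hand side $2g-2$ is rewritten, via Theorem \ref{segoviath}, as $(\ell^3+1)(\ell^2-2)$. Thus it only remains to show that
$$\sum_{P\in \cX}d_P = e_L + \sum_{i=1}^{\infty}(|L_i(P_\infty)|-1).$$

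First I would split the sum over $P\in \cX$ as $d_{P_\infty}+\sum_{P\neq P_\infty}d_P$. For any $P\neq P_\infty$, the key observation is that the first ramification group $L_1(P)$ is the unique $p$-Sylow subgroup of $L_0(P)$ (by \cite[Prop. III.8.5]{stichtenothbook}). Since $\cX$ is $\fqq$-maximal and every element of $L$ already fixes $P_\infty$, the two cited results \cite[Thm. 9.76]{hirschfeld-korchmaros-torres2008} and \cite[Thm. 11.133]{hirschfeld-korchmaros-torres2008} imply that no non-trivial $p$-element of $L$ can fix a second point, so $L_1(P)$ is trivial and hence $L_i(P)$ is trivial for every $i\ge 1$. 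Therefore $d_P=|L_0(P)|-1=|L_P|-1$ for every $P\neq P_\infty$.

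At $P=P_\infty$ the hypothesis $L\subseteq \aut(\cX)_{P_\infty}$ gives $L_0(P_\infty)=L$, so
$$d_{P_\infty} = (|L|-1) + \sum_{i=1}^{\infty}(|L_i(P_\infty)|-1).$$
Adding the contribution $|L|-1 = |L_{P_\infty}|-1$ to the previous sum then reconstitutes the full quantity $e_L = \sum_{P\in\cX}(|L_P|-1)$ defined in \eqref{HurTame2}. Substituting back into \eqref{huhi} yields the claimed formula.

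There is no real obstacle here: the argument is essentially bookkeeping, with the only substantive input being the fact that wild ramification of the cover $\cX\to \cX/L$ is concentrated at $P_\infty$, which in turn rests on the maximality of $\cX$ and on the two structural results cited above. This is why the higher ramification groups $L_i(P_\infty)$ for $i\ge 1$ survive as an extra term, while away from $P_\infty$ only the tame contribution $|L_P|-1$ appears.
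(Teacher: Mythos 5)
Your proof is correct and follows essentially the same route as the paper: the paper likewise applies the Hurwitz--Hilbert formula \eqref{huhi}, invokes \cite[Thm. 9.76]{hirschfeld-korchmaros-torres2008} and \cite[Thm. 11.133]{hirschfeld-korchmaros-torres2008} to conclude that no non-trivial $p$-element of a group fixing $P_\infty$ can fix a second point, and thereby reduces $d_P$ to $|L_0(P)|-1$ for $P\neq P_\infty$, leaving the higher ramification terms only at $P_\infty$. The bookkeeping you carry out to reassemble $e_L$ from the order-zero terms is exactly the intended argument.
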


We now provide an explicit description of $\aut(\cX)_{P_\infty}$. For $a\in \fqs$ such that $a^{(\ell^2-\ell+1)(\ell^2-1)}=1$, for $b,c \in \fls$ such that $c^\ell+c=b^{\ell+1}$, let $\xi_{a,b,c}$ be the projectivity in $PGL(4,q^2)$ defined by the matrix
$$
\begin{pmatrix}
a^{\ell^2-\ell+1} & 0 & 0 & b \\
b^\ell a^{\ell^2-\ell+1} & a^{\ell^3+1} & 0 & c \\
0 & 0 & a^{\ell^2}  & 0 \\
0 & 0 & 0 & 1
\end{pmatrix}.
$$
It is easily seen that $\xi_{a,b,c}=\xi_{a,0,0}\xi_{1,b,c}$, with $\xi_{1,b,c}\in \Gamma\cap \aut(\cX)_{P_\infty}$. Also, by straightforward computation $\xi_{a,0,0}$ lies in $\aut(\cX)_{P_\infty}$. By a trivial counting argument, we then have 
$$
\aut(\cX)_{P_\infty}=\{\xi_{a,b,c}\mid a^{(\ell^2-\ell+1)(\ell^2-1)}=1, b,c\in \fls, b^{\ell+1}=c^\ell+c\}.
$$
The elements $\xi_{1,b,c}$ form a subgroup of $\aut(\cX)_{P_\infty}$ of size $\ell^3$; therefore  the first ramification group of $\aut(\cX)_{P_\infty}$ at $P_\infty$ is
$$
\{\xi_{1,b,c}\mid b,c\in \fls, b^{\ell+1}=c^\ell+c\}.
$$
In order to determine higher ramification groups at $P_\infty$ we need to compute the integer $v_{P_\infty}(h^\star(t)-t)$, with $t$ a $P_\infty$-prime element, for automorphisms $h=\xi_{1,b,c}$. By \cite[Sect. 4]{segovia1} 
$$
v_{P_\infty}(x)=-(\ell^3-\ell^2+\ell),\quad
v_{P_\infty}(y)=-(\ell^3+1),\quad
v_{P_\infty}(z)=-\ell^3.
$$
Therefore, $t$ can be assumed to be the rational function $z/y$. Since
$$
\xi_{1,b,c}^\star(z)=z,\qquad \xi_{1,b,c}^\star(y)=y+b^\ell x+c
$$
we have that
$$
\xi_{1,b,c}^\star(z/y)-(z/y)=\frac{c-zxb^\ell}{y(y+b^\ell x+c)},
$$
whence
$$
v_{P_\infty}(\xi_{1,b,c}^\star(t)-t)=\left\{
\begin{array}{ll}
\ell^2-\ell+2 & \text{if } b\neq 0\\
\ell^3+2 & \text{if } b=0
\end{array}
\right.
\,.$$
The following result is then obtained.
\begin{proposition}\label{stabi2} Let $L$ be a subgroup of $\aut(\cX)_{P_\infty}$. Then
$$
L_1(P_\infty)=L_2(P_\infty)=\ldots=L_{\ell^2-\ell+1}(P_\infty)=\{\xi_{1,b,c}\mid \xi_{1,b,c}\in L\},
$$
and
$$
L_{\ell^2-\ell+2}(P_\infty)=L_{\ell^2-\ell+3}(P_\infty)=\ldots= L_{\ell^3+1}(P_\infty)=\{\xi_{1,0,c}\mid \xi_{1,0,c}\in L\}.
$$
For $i>\ell^3+1$ the group $L_i(P_\infty)$ is trivial.
\end{proposition}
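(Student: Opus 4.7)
The plan is to combine the definition of higher ramification groups with the valuation computations that have just been carried out in the excerpt. Recall that by definition $L_i(P_\infty) = \{h \in L : v_{P_\infty}(h^\star(t) - t) \geq i+1\}$, where $t = z/y$ is the chosen $P_\infty$-prime element, and that the ramification filtration is nested: $L_1(P_\infty) \supseteq L_2(P_\infty) \supseteq \cdots$.

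First I would show that for $i \geq 1$ only elements of the form $\xi_{1,b,c}$ can possibly lie in $L_i(P_\infty)$. By \cite[Prop.~III.8.5]{stichtenothbook}, $L_1(P_\infty)$ is the unique $p$-Sylow subgroup of $L_0(P_\infty) = L$. Since the order of $\aut(\cX)_{P_\infty}$ factors as $\ell^3 \cdot (\ell^2-1)(\ell^2-\ell+1)$ with the second factor coprime to $p$, and since $\{\xi_{1,b,c} \mid b,c \in \fl,\ b^{\ell+1} = c^\ell + c\}$ is a normal subgroup of order exactly $\ell^3$, it is the unique $p$-Sylow of $\aut(\cX)_{P_\infty}$. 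Intersecting with $L$ gives $L_1(P_\infty) = \{\xi_{1,b,c} \in L\}$, and the nesting forces every $L_i(P_\infty)$ with $i \geq 1$ to consist of such elements.

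Next, I would read off the ramification breaks directly from the explicit values of $v_{P_\infty}(\xi_{1,b,c}^\star(t) - t)$ already established in the excerpt. For $\xi_{1,b,c}$ with $b \neq 0$ the value is $\ell^2 - \ell + 2$, so $\xi_{1,b,c} \in L_i(P_\infty)$ precisely when $i \leq \ell^2 - \ell + 1$. For $\xi_{1,0,c}$ with $c \neq 0$ the value is $\ell^3 + 2$, so such an element belongs to $L_i(P_\infty)$ precisely when $i \leq \ell^3 + 1$. The identity $\xi_{1,0,0}$ of course lies in every $L_i(P_\infty)$. Combining these three cases yields
\[
L_i(P_\infty) =
\begin{cases}
\{\xi_{1,b,c} \in L\} & \text{if } 1 \leq i \leq \ell^2 - \ell + 1,\\
\{\xi_{1,0,c} \in L\} & \text{if } \ell^2 - \ell + 2 \leq i \leq \ell^3 + 1,\\
\{\mathrm{id}\} & \text{if } i \geq \ell^3 + 2,
\end{cases}
\]
which is exactly the claim.

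There is no real obstacle here: the statement is essentially a bookkeeping consequence of the valuation formula $v_{P_\infty}(\xi_{1,b,c}^\star(t) - t) \in \{\ell^2 - \ell + 2,\ \ell^3 + 2\}$ combined with the identification of the wild inertia $L_1(P_\infty)$ as a Sylow $p$-subgroup. The only point requiring mild care is the Sylow identification, which is purely an order count in $\aut(\cX)_{P_\infty}$ together with the normality of the subgroup $\{\xi_{1,b,c}\}$.
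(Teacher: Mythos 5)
Your argument is correct and takes essentially the same route as the paper, which obtains the proposition directly from the valuation computation $v_{P_\infty}(\xi_{1,b,c}^\star(t)-t)\in\{\ell^2-\ell+2,\ \ell^3+2\}$ together with the identification of the first ramification group as the unique $p$-Sylow subgroup $\{\xi_{1,b,c}\}$ of order $\ell^3$ intersected with $L$. One minor slip: the parameters $b,c$ range over $\fls$, not $\fl$ (only then does the set $\{\xi_{1,b,c}\}$ have the order $\ell^3$ that your Sylow count relies on).
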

As to the computation of $e_L$ in Lemma \ref{stab11}, the following fact will be useful.
\begin{lemma}\label{obv} Let $L$ be a subgroup of $\aut(\cX)_{P_\infty}$. Then any point of $\cX$ which is fixed by a non-trivial element in $L$ belongs to ${\mathcal O}_1$.
\end{lemma}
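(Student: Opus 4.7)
My plan is to analyse a fixed point $P$ of a non-trivial $h=\xi_{a,b,c}\in L$ by working directly with the matrix representation given just before Proposition~\ref{stabi2}, splitting according to whether $a=1$ or $a\ne 1$.

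I would first handle the case $a=1$, in which $h=\xi_{1,b,c}$ with $(b,c)\ne(0,0)$ is a non-trivial element of the $p$-Sylow of $\aut(\cX)_{P_\infty}$ determined in Proposition~\ref{stabi2}. The maximality of $\cX$, already invoked in the paragraph before Lemma~\ref{stab11}, forbids any non-trivial $p$-element of $\aut(\cX)$ fixing $P_\infty$ from fixing any further point of $\cX$; hence $P=P_\infty\in\mathcal{O}_1$.

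For $a\ne 1$, I would reduce to the affine case (the alternative $P=P_\infty$ being trivial). The third row of the matrix of $\xi_{a,b,c}$ yields the fixed-point condition $(a^{\ell^2}-1)Z(P)=0$. Since the order of $a$ divides $(\ell^2-\ell+1)(\ell^2-1)$ and both factors are coprime to $\ell$ (being $\equiv 1$ and $\equiv -1\pmod \ell$, respectively), the order of $a$ is coprime to $\ell^2$; consequently $a^{\ell^2}=1$ would force $a=1$, a contradiction. Hence $Z(P)=0$, so $P$ lies in $\cX\cap\pi_0$.

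To conclude, I would recall the identification $\cX\cap\pi_0=\mathcal{O}_1$: the discussion around \eqref{lam} shows that every non-trivial $\alpha_u\in\Lambda$ fixes exactly the $\ell^3+1$ points of $\cX\cap\pi_0$, while the ramification description of $\phi:\cX\to\cH$ in Section~\ref{section3} forces $\mathcal{O}_1\subseteq\mathrm{Fix}(\Lambda)\subseteq\cX\cap\pi_0$, and equality follows by cardinality. Thus $P\in\mathcal{O}_1$. The whole argument is essentially one short computation; the only arithmetic check that could be a stumbling block is the coprimality $\gcd(\ell^2,(\ell^2-\ell+1)(\ell^2-1))=1$, and this is immediate modulo $\ell$.
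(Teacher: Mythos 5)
Your argument is correct, but it is genuinely different from the one in the paper. The paper's proof is purely group-theoretic on the quotient: if a non-trivial $\alpha\in L$ fixed some $P\in\cX\setminus\mathcal{O}_1$, then $\pi(\alpha)\in PGU(3,\ell)$ would fix both $\bar P_\infty\in\cH(\fl{}^2)$ and $\phi(P)\in\cH\setminus\cH(\fls)$, which by the classification of fixed points in \cite{garcia-stichtenoth-xing2001} forces $\pi(\alpha)=id$, i.e.\ $\alpha\in\Lambda$; since non-trivial elements of $\Lambda$ fix only points of $\mathcal{O}_1$, this gives $\alpha=id$. You instead work directly with the explicit $4\times 4$ matrices $\xi_{a,b,c}$, splitting into the $p$-element case $a=1$ (disposed of by the standard fact, already quoted before Lemma~\ref{stab11}, that a non-trivial $p$-element of an automorphism group of a maximal curve fixing $P_\infty$ fixes no other point) and the case $a\neq 1$, where the third and fourth rows of the matrix force $(a^{\ell^2}-1)Z(P)=0$ and the coprimality $\gcd\bigl(\ell^2,(\ell^2-\ell+1)(\ell^2-1)\bigr)=1$ rules out $a^{\ell^2}=1$, so $P\in\cX\cap\pi_0$. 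Your identification $\cX\cap\pi_0=\mathcal{O}_1$ via the ramification of $\phi$ and a cardinality count is also sound. What your route buys is self-containedness: it needs only the explicit description of $\aut(\cX)_{P_\infty}$ established just before the lemma, rather than the external fixed-point analysis of $PGU(3,\ell)$ on $\cH$; the cost is a case split and a small arithmetic check, whereas the paper's argument is three lines and generalizes immediately to any subgroup whose projection fixes an $\fls$-rational point of $\cH$.
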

\begin{proof}
Assume that $\alpha\in L$ fixes a point $P\in \cX\setminus {\mathcal O}_1$. Then $\pi(\alpha)$ is an element of $PGU(3,\ell)$ fixing both the infinite point $\bar P_\infty$ of $\cH$ and the point $\phi(P)$ in $\cH\setminus \cH(\fls)$. Then by \cite[Sect. 4]{garcia-stichtenoth-xing2001} $\pi(\alpha)$ is trivial, that is $\alpha\in \Lambda$. Since any non-trivial element in $\Gamma$ only fixes points in ${\mathcal O}_1$, we obtain $\alpha=id$. 
\end{proof}

In Section \ref{par(A)} we will deal with the case $L=\Sigma_1\times \Sigma_2$, where $\Sigma_1$ is contained in $\Gamma$ and $\Sigma_2$ is a subgroup of $\Lambda$, see Section \ref{par(A)}. To this end, we determine a subgroup $\Omega$ of $\Gamma\cap \aut(\cX)_{P_\infty}$ such that  $\Omega\cap \Lambda=\{id\}$. In Section \ref{gen(A)} the case $L=\pi^{-1}(\bar G)$ with $\bar G$ a group of type (A) will be dealt with.

Let ${\mu_1}$ be the highest power of $3$ dividing $\ell+1$. Let
$$
\Omega=\{\xi_{a,b,c}\mid a^\frac{\ell^2-1}{{\mu_1}}=1\}.
$$
Assume that $\alpha=\xi_{a,b,c}\in \Omega \cap \Lambda$. Then clearly $b=c=0$ holds, whence $\alpha=\xi_{a,0,0}$ for some $a$ such that $a^{(\ell^2-1)/{\mu_1}}=1$. Note that $\xi_{a,0,0}\in \Lambda$ implies $a^{\ell^2-\ell+1}=1$. But then $a=1$ since $\gcd(\frac{\ell^2-1}{{\mu_1}},\ell^2-\ell+1)=1$. Then the following holds.
\begin{lemma}
The subgroup generated by $\Omega$ and $\Lambda$ is the direct product $\Omega\times \Lambda$. The projection $\pi(\Omega\times \Lambda)$ is a subgroup of the stabilizer of the infinite point $\bar P_\infty$ of $\cH$ in $PGU(3,\ell)$ isomorphic to $\Omega$.
\end{lemma}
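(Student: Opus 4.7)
The plan is that both assertions should follow essentially formally from two facts already at our disposal: (a) $\Lambda$ is a central subgroup of $\aut(\cX)$, as recorded in the theorem introducing $\Lambda$; and (b) the computation just completed in the paragraph above, which shows that $\Omega\cap\Lambda=\{id\}$. So I expect the proof to be short, with no real obstacle.

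First I would establish the direct-product statement. Because $\Lambda$ is central, every element of $\Omega$ commutes with every element of $\Lambda$, so $\Omega$ and $\Lambda$ normalize (indeed centralize) one another. Combined with the trivial intersection $\Omega\cap\Lambda=\{id\}$, the usual criterion for an internal direct product gives $\langle\Omega,\Lambda\rangle=\Omega\times\Lambda$. Nothing more needs to be said here.

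For the second assertion, I would use that $\pi\colon\aut(\cX)\to PGU(3,\ell)$ is the natural projection modulo $\Lambda$, so $\ker\pi=\Lambda$. Therefore $\pi(\Omega\times\Lambda)=\pi(\Omega)$, and the restriction $\pi|_\Omega$ has kernel $\Omega\cap\Lambda=\{id\}$, hence is injective; thus $\pi(\Omega)\cong\Omega$. To check that $\pi(\Omega)$ stabilizes $\bar P_\infty$, I would use the intertwining relation $\phi\circ h=\pi(h)\circ\phi$ valid for every $h\in\aut(\cX)$, which is built into the fact that $\cH=\cX/\Lambda$ and $\phi$ is the quotient morphism. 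Since by construction every $\xi_{a,b,c}\in\Omega$ lies in $\aut(\cX)_{P_\infty}$, for each such $h$ we get
\[
\pi(h)(\bar P_\infty)=\pi(h)(\phi(P_\infty))=\phi(h(P_\infty))=\phi(P_\infty)=\bar P_\infty,
\]
which yields the claim.

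There is no serious obstacle: the only input apart from general group-theoretic bookkeeping is the centrality of $\Lambda$ and the trivial intersection $\Omega\cap\Lambda=\{id\}$, both of which are available. If anything, the only point requiring a moment's care is to remember that $\pi$ really does interchange the two actions (on $\cX$ and on $\cH=\cX/\Lambda$), but this is immediate from the definition of $\pi$ and $\phi$.
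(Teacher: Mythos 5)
Your proposal is correct and follows the same route as the paper, which gives no separate proof beyond the preceding computation that $\Omega\cap\Lambda=\{id\}$ and treats the rest (centrality of $\Lambda$, $\ker\pi=\Lambda$, and the fact that $\Omega\subseteq\aut(\cX)_{P_\infty}$ projects into the stabilizer of $\bar P_\infty$) as immediate. You have merely written out those routine steps explicitly.
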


\subsection{$L=\Sigma_1\times \Sigma_2$, $\Sigma_1< \Omega$, $\Sigma_2<\Lambda$}\label{par(A)}
Let $|\Sigma_1|=mp^{v+w}$, where
$$
p^{v+w}=|\Sigma_1\cap \{\xi_{1,b,c}\}|,\qquad p^v=|\Sigma_1\cap \{\xi_{1,0,c}\}|,
$$
and $m$ is a divisor of $(\ell^2-1)/{\mu_1}$. Let $|\Sigma_2|=d_2$. Then by Lemma \ref{stab11}, together with Proposition \ref{dielleprop} and Lemma \ref{obv}, 
it follows that
$$
\begin{array}{rl}
\ell^5-2\ell^3+\ell^2-2=  &md_2p^{v+w}(2g_L-2)+(\ell^2-\ell+1)(p^{v+w}-1)+(\ell^3-\ell^2+\ell)(p^w-1)\\{} & +(d_2-1)(\ell^3+1)+d_2\sum_{\bar h \in \pi(\Sigma_1),\bar h \neq id}|\{\bar P\in \bar {\mathcal O}_1\mid \bar h(\bar P)=\bar P\}|,
\end{array}
$$
that is
\begin{equation}\label{10set}
\begin{array}{rl}
(2g_L-1)md_2p^{v+w}= & (\l^3+1)(\l^2-d_2)-(\ell^2-\ell+1)p^w(p^v+\ell)+d_2\\ {} & -d_2\sum_{\bar h \in \pi(\Sigma_1),\bar h \neq id}|\{\bar P\in \bar {\mathcal O}_1, \bar P \neq \bar P_\infty \mid \bar h(\bar P)=\bar P\}|.
\end{array}
\end{equation}
In order to provide concrete values of genera $g_L$ we are going to consider subgroups of $PGU(3,\ell)_{\bar P_\infty}$ that are known in the literature, see \cite{garcia-stichtenoth-xing2001} and \cite{abdon-quoos2004}.
To this end it is useful to note that in both papers \cite{garcia-stichtenoth-xing2001} and \cite{abdon-quoos2004} the group $PGU(3,\ell)_{\bar P_\infty}$ is denoted as $\cA(P_\infty)$, and that the subgroup $\pi(\Omega)$ of $PGU(3,\ell)$ in the notation of both \cite{garcia-stichtenoth-xing2001} and \cite{abdon-quoos2004} is the subgroup of index ${\mu_1}$ in $\cA(P_\infty)$ consisting of elements $[a,b,c]$ with $a^{(\ell^2-1)/{\mu_1}}=1$. It should also be noted that for each $\Sigma_1<\Omega$ the integer 
$\sum_{\bar h \in \pi(\Sigma_1),\bar h \neq id}|\{\bar P\in \bar {\mathcal O}_1, \bar P \ne \bar P_\infty\mid \bar h(\bar P)=\bar P\}|$ is computed in \cite{garcia-stichtenoth-xing2001}. From \eqref{10set}, together with Theorem 4.4 in \cite{garcia-stichtenoth-xing2001},it follows that
$$
g_L=\frac{\l^5+\l^2-(\ell^2-\ell+1)p^w(p^v+\ell)-d_2(\l^3+(d-1)p^v\l-dp^{v+w})}{2md_2p^{v+w}},
$$
where $d=\gcd(m,\ell+1)$.

The following result then holds.
\begin{theorem}\label{thmA1} Let $d_2$ be any divisor of $\ell^2-\ell+1$.
\begin{enumerate}
\item[\rm{(i)}] Let $p\neq2$ and $m\mid \l ^{2}-1$ be such that $3$ does not divide $m$ and $m>1$. Let $d=\gcd(m, \l+1)$ and let $s:=min\left\{r\geq1 : p^{r}\equiv1 \textrm{ mod } \frac{m}{d}\right\}$. For each $0\leq w\leq h$, such that $s\mid w$, there exists a subgroup $L$ of $\Aut(\cX)$ with
$$g_{L}=\frac{(\l^{3}+1)(\l^{2}-p^{w})-(\l^{3}-\l)d_{2}-dd_{2}(\l-p^{w})}{2md_2p^{w}}$$
(cf. \cite[Prop. 4.6]{garcia-stichtenoth-xing2001}).

\item[\rm{(ii)}] Let $p\neq2$ and $m\mid \l-1$. Let $d=\gcd(m, \l+1)$. Let $s$ be the order of $p$ in $(\ZZ/m\ZZ)^{*}$, and let $$r=\displaystyle{\left\{\begin{array}[pos]{lll} \textrm{order of p in }  (\ZZ/\frac{m}{2}\ZZ)^{*}, && m \textrm{ even}\\
s, && m \textrm{ odd}
\end{array}\right.}.$$ For  each $0\le v\le h$ such that $s\mid v$, and for each $0\leq w\leq h-1$ such that $r\mid w$, there exists a subgroup $L$ of $\Aut(\cX)$ with
$$g_{L}=\frac{\l^{5}+\l^{2}-\l^{3}d_{2}-(\l^{2}-\l+1-dd_{2})p^{v+w}-(\l^{3}-\l^{2}+\l)p^{w}-d_{2}\l p^{v}(d-1)}{2md_2p^{v+w}}$$
(cf. \cite[Thm. 1]{abdon-quoos2004}).

\item[\rm{(iii)}] Let $p\neq2$ and $m\mid \l^{2}-1$ be such that $m$ does not divide $\l-1$, and $3$ does not divide $m$. Let $d=\gcd(m, \l+1)$. Let $s$ be the order of $p$ in $(\ZZ/m\ZZ)^*$, and $r$ be the order of $p$ in $(\ZZ/(m/d)\ZZ)^*$. For each $0\leq v\leq h$, such that $v\mid 2h$, $v$ does not divide $h$ and $s\mid v$, and for each $\frac{v}{2}\leq w\leq h$, such that $r\mid w$, there exists a subgroup $L$ of $\Aut(\cX)$ with
$$g_{L}=\frac{\l^{5}+\l^{2}-\l^{3}d_{2}-(\l^{2}-\l+1-dd_{2})p^{v+w}-(\l^{3}-\l^{2}+\l)p^{w}-d_{2}\l p^{v}(d-1)}{2md_2p^{v+w}}$$
(cf. \cite[Thm. 2]{abdon-quoos2004}).

\item[\rm{(iv)}] Let $p\neq 2$. For each $0\leq v\leq h$, and  for each $0\leq w\leq h-1$, there exists a subgroup $L$ of $\Aut(\cX)$ with
$$g_{L}=\frac{\l^{5}+\l^{2}-(\l^{2}-\l+1)p^{w}(p^{v}+\l)+\l d_{2}(p^{v}-\l^{2})-d_{2}p^{v}(\l-p^{w})}{2d_2p^{v+w}}$$
(cf. \cite[Thm. 3.2]{garcia-stichtenoth-xing2001}).


\item[\rm{(v)}] Let $p=2$. For all integers $v,w$ with $0\le v\le w<h$ there exists a subgroup $L$ of $\aut(\cX)$ with
$$
g_L=\frac{\l^{5}+\l^{2}-(\l^{2}-\l+1)2^{w}(2^{v}+\l)+\l d_{2}(2^{v}-\l^{2})-d_{2}2^{v}(\l-2^{w})}{d_22^{v+w+1}}
$$
(cf. \cite[Cor. 3.4(ii)]{garcia-stichtenoth-xing2001}).

\item[\rm{(vi)}] Let $p=2$. 
 For all integers $v,w$ with 
 $w\mid h$, $w\mid v$, $v\mid 2h$, $1\le v<n$, and $(2^v-1)/(2^w-1)\mid (2^h+1)$,
  there exist subgroups $L$ of $\aut(\cX)$ with
$$
g_L=\frac{\l^{5}+\l^{2}-(\l^{2}-\l+1)2^{w}(2^{v'}+\l)+\l d_{2}(2^{v'}-\l^{2})-d_{2}2^{v'}(\l-2^{w})}{d_22^{v'+w+1}}
$$
for each $v'$ with $0\le v'\le v$.
(cf. \cite[Cor. 3.4(i), Cor. 3.4(iii)]{garcia-stichtenoth-xing2001}).

\item[\rm{(vii)}] Let $p=2$ and $h$ be odd. Let $s\mid h$ and $0\leq k\leq s$. For each $1\leq v\leq h-1$, such that $v=s+k$, and for each $s\leq w\leq h-1$, there exists a subgroup $L$ of $\Aut(\cX)$ with
$$g_{L}=\frac{\l^{5}+\l^{2}-(\l^{2}-\l+1)2^{w}(2^{v}+\l)+\l d_{2}(2^{v}-\l^{2})-d_{2}2^{v}(\l-2^{w})}{d_22^{v+w+1}}$$
(cf. \cite[Thm. 4]{abdon-quoos2004}).

\item[\rm{(viii)}] Let $p=2$ and $h$ be even and such that $4$ does not divide $h$. Let $s\mid h$ be odd and $0\leq k\leq s$. For each $1\leq v\leq h-1$, such that $v=2s+k$, and for each $2s\leq w\leq h-1$, there exists a subgroup $L$ of $\Aut(\cX)$ with
$$g_{L}=\frac{\l^{5}+\l^{2}-(\l^{2}-\l+1)2^{w}(2^{v}+\l)+\l d_{2}(2^{v}-\l^{2})-d_{2}2^{v}(\l-2^{w})}{d_22^{v+w+1}}$$
(cf. \cite[Thm. 5]{abdon-quoos2004}).

\item[\rm{(ix)}] Let $p=2$ and write $h=2^{e}f$, with $e,f\in\NN$ and $f\geq 3$ odd. For each divisor $j$ of $f$, let $k_{j}$ be the order of $2$ in $(\ZZ/j\ZZ)^{*}$ and $r_{j}=\frac{\Phi(j)}{k_{j}}$, where $\Phi$ is the Euler function. For each $1\leq w\leq h-2$, such that $w=2^{e}\left[1+\sum_{\frac{j}{f}\neq1}l_{j}k_{j}\right]$, with $0\leq l_{j}\leq r_{j}$, there exists a subgroup $L$ of $\Aut(\cX)$ with
$$g_{L}=\frac{\l^{5}+\l^{2}-(\l^{2}-\l+1)2^{w}(2^{w+1}+\l)+\l d_{2}(2^{w+1}-\l^{2})-d_{2}2^{w+1}(\l-2^{w})}{d_22^{2w+2}}$$
(cf. \cite[Thm. 6]{abdon-quoos2004}).

\end{enumerate}

\end{theorem}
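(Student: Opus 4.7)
The plan is to deduce all nine cases uniformly from formula \eqref{10set} together with the dictionary between $\Sigma_1 < \Omega$ and its projection $\pi(\Sigma_1)$ inside the stabilizer $\cA(\bar P_\infty):=PGU(3,\ell)_{\bar P_\infty}$. By the Lemma preceding Section \ref{par(A)}, the restriction of $\pi$ to $\Omega$ is an isomorphism onto the index-${\mu_1}$ subgroup of $\cA(\bar P_\infty)$ consisting of elements $[a,b,c]$ with $a^{(\ell^2-1)/{\mu_1}}=1$. Since every subgroup cited from \cite{garcia-stichtenoth-xing2001} and \cite{abdon-quoos2004} is contained in precisely this index-${\mu_1}$ subgroup (all parameters $m$ appearing in (i)--(ix) are coprime to $3$, hence to ${\mu_1}$), each such subgroup lifts uniquely to a subgroup $\Sigma_1 \subset \Omega$. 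For the factor $\Sigma_2$, observe that $\Lambda$ is cyclic of order $\ell^2-\ell+1$, so a subgroup of order $d_2$ exists for every divisor $d_2$ of $\ell^2-\ell+1$; take $L = \Sigma_1 \times \Sigma_2$.

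The key algebraic input, common to every part, is the computation of the sum
\[
N(\Sigma_1):=\sum_{\bar h \in \pi(\Sigma_1),\,\bar h \neq id}\bigl|\{\bar P\in \bar{\mathcal O}_1,\ \bar P \neq \bar P_\infty \mid \bar h(\bar P)=\bar P\}\bigr|.
\]
This quantity has been carried out in the references for each parameter regime:
\cite[Thm.~4.4]{garcia-stichtenoth-xing2001} covers (i), \cite[Thm.~1]{abdon-quoos2004} covers (ii), \cite[Thm.~2]{abdon-quoos2004} covers (iii), \cite[Thm.~3.2]{garcia-stichtenoth-xing2001} covers (iv), \cite[Cor.~3.4]{garcia-stichtenoth-xing2001} covers (v)--(vi), and \cite[Thms.~4--6]{abdon-quoos2004} cover (vii)--(ix). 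In each regime, the reference provides the precise orders $p^{v+w}$ and $p^v$ that are realized inside the Sylow $p$-subgroup of $\cA(\bar P_\infty)$ (equivalently, inside the first ramification group at $P_\infty$ described by Proposition \ref{stabi2}), together with an explicit formula or count for $N(\Sigma_1)$ which agrees, after substitution, with the numerators appearing in the statement.

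Plugging $|L|=md_2p^{v+w}$, the value of $N(\Sigma_1)$ from the appropriate reference, and the parameters $v,w,m,d$ dictated by the case, into \eqref{10set} yields
\[
(2g_L-1)\,md_2p^{v+w}
=(\ell^3+1)(\ell^2-d_2)-(\ell^2-\ell+1)p^w(p^v+\ell)+d_2-d_2\,N(\Sigma_1),
\]
and solving for $g_L$ gives the formula in each subcase. The nine formulas thus become pure algebraic simplifications; Lemma \ref{obv} is used at the outset to guarantee that only fixed points in ${\mathcal O}_1$ contribute to $e_L$, so that Proposition \ref{dielleprop} reduces to the single term governed by $N(\Sigma_1)$.

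The genuinely nontrivial work has already been done in \cite{garcia-stichtenoth-xing2001,abdon-quoos2004}, namely the explicit construction of the Sylow-compatible subgroups realizing the allowed pairs $(v,w)$ and the point-counts $N(\Sigma_1)$. The only obstacles on our side are bookkeeping: verifying case by case that the subgroup produced in the reference lies inside $\pi(\Omega)$ (so that it actually lifts to $\Omega$, which forces $\gcd(m,3)=1$), and checking that the arithmetic conditions on $v,w$ stated in (i)--(ix) agree with the existence conditions used in the corresponding reference. Once these routine compatibilities are verified, the formulas follow by direct substitution into \eqref{10set}.
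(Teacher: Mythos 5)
Your proposal follows essentially the same route as the paper: derive \eqref{10set} from Lemma \ref{stab11}, Proposition \ref{dielleprop} and Lemma \ref{obv}, identify $\pi(\Omega)$ with the index-${\mu_1}$ subgroup of $\cA(\bar P_\infty)$ so that the subgroups of \cite{garcia-stichtenoth-xing2001} and \cite{abdon-quoos2004} lift to $\Sigma_1<\Omega$, import their fixed-point counts, and substitute. The only small imprecision is your blanket claim that every $m$ in (i)--(ix) is coprime to $3$ (in case (ii) $m\mid\ell-1$ may be divisible by $3$ when $\ell\equiv 1\pmod 3$, but then ${\mu_1}=1$ and the lifting still goes through since $m\mid(\ell^2-1)/{\mu_1}$), so the argument is sound.
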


\subsection{$L=\pi^{-1}(\bar G)$, $\bar G<PGU(3,\ell)_{\bar P_\infty}$}\label{gen(A)} Groups $L=\pi^{-1}(\bar G)$ that have not already been considered in Section \ref{par(A)}  are groups $\pi^{-1}(\bar G)$ with $\bar G$ containing elements $[a,b,c]$ with $a^{(\ell^2-1)/\mu_1}\neq 1$ (again the notation of both \cite{garcia-stichtenoth-xing2001} and \cite{abdon-quoos2004} is used to describe elements in $PGU(3,\ell)_{\bar P_\infty}$). Let $|\bar G|={\bar m}p^{v+w}$, with $\gcd(\bar m, p)=1$, and $p^v=|\{[a,b,c]\in G\mid b=0\}|$.
Then it is easily seen that $|L|=mp^{v+w}$ with $m=(\ell^2-\ell+1)\bar m$, and
$$
p^{v+w}=|L\cap \{\xi_{1,b,c}\}|,\qquad p^v=|L\cap \{\xi_{1,0,c}\}|.
$$
Clearly, $L_\Lambda=\Lambda$ holds.

By Lemma \ref{stab11}, together with Corollary \ref{diellecor} and Lemma \ref{obv}, 
it follows that
$$
\begin{array}{rl}
\ell^5-2\ell^3+\ell^2-2=  &mp^{v+w}(2g_L-2)+(\ell^2-\ell+1)(p^{v+w}-1)+(\ell^3-\ell^2+\ell)(p^w-1)\\{} & +(\ell^2-\ell)(\ell^3+1)+(\ell^2-\ell+1)\sum_{\bar h \in G,\bar h \neq id}|\{\bar P\in \bar {\mathcal O}_1\mid \bar h(\bar P)=\bar P\}|.
\end{array}
$$
In order to provide concrete values of genera $g_L$ we are going to consider subgroups $\bar G$
containing elements $[a,b,c]$ with $a^{(\ell^2-1)/\mu_1}\neq 1$ that have been described in either 
\cite{garcia-stichtenoth-xing2001} or \cite{abdon-quoos2004}.
\begin{theorem}\label{thmA2}
\begin{enumerate}
\item[\rm{(i)}] Let $p\neq2$, $\bar{m}$ be an integer such that $\bar{m}\mid \l ^{2}-1$ and $3\mid\bar{m}$. Let $d=\gcd(\bar{m}, \l+1)$ and let $s:=min\left\{r\geq1 : p^{r}\equiv1 \textrm{ mod } \frac{\bar{m}}{d}\right\}$. For each $0\leq w\leq h$, such that $s\mid w$, there exists a subgroup $L$ of $\Aut(\cX)$ with
$$g_{L}=\frac{(\l-p^{w})(\l+1-d)}{2\bar{m}p^{w}}$$
(cf. \cite[Prop. 4.6]{garcia-stichtenoth-xing2001}).

\item[\rm{(ii)}] Let $p\neq2$ and $\bar{m}\mid \l^{2}-1$ be such that $\bar{m}$ does not divide $\l-1$, and $3\mid\bar{m}$. Let $d=\gcd(\bar{m}, \l+1)$. Let $s$ be the order of $p$ in $(\ZZ/\bar{m}\ZZ)^{*}$,  and let $$r=\displaystyle{\left\{\begin{array}[pos]{lll} \textrm{order of p in }  (\ZZ/\frac{\bar{m}}{2}\ZZ)^{*}, && \bar{m} \textrm{ even}\\
s, && \bar{m} \textrm{ odd}
\end{array}\right.}.$$
For each $0\leq v\leq h$, such that $v\mid 2h$, $v$ does not divide $h$ and $s\mid v$, and for each $\frac{v}{2}\leq w\leq h$ such that $r\mid w$, there exists a subgroup $L$ of $\Aut(\cX)$ with
$$g_{L}=\frac{(\l^{2}-p^{v+w}-\l p^{w}-d\l p^{v}+d p^{v+w}+\l p^{v})}{2\bar{m}p^{v+w}}$$
(cf. \cite[Thm. 2]{abdon-quoos2004}).

\end{enumerate}

\end{theorem}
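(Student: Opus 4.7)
The plan is to combine the genus identity already derived in this subsection with fixed-point counts from the literature.

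First I would exhibit the required subgroups. In case (i), take $\bar G$ to be the subgroup of $\cA(\bar P_\infty)$ constructed in \cite[Prop.~4.6]{garcia-stichtenoth-xing2001}, whose order and underlying $v,w$ match the parameters in the statement. In case (ii), take $\bar G$ to be the subgroup built in \cite[Thm.~2]{abdon-quoos2004}. In either case, the hypothesis $3 \mid \bar m$ together with the description of $\pi(\Omega)$ as the subgroup of $\cA(\bar P_\infty)$ of index $\mu_1$ forces $\bar G$ to contain an element $[a,b,c]$ with $a^{(\ell^2-1)/\mu_1}\neq 1$, placing us genuinely outside the situation of Section \ref{par(A)}. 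Setting $L = \pi^{-1}(\bar G)$ then yields $L_\Lambda = \Lambda$ and $|L| = (\ell^2-\ell+1)\bar m p^{v+w} = m p^{v+w}$.

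Next I would substitute the fixed-point count
$$
N_{\bar G} = \sum_{\bar h \in \bar G,\,\bar h \neq \mathrm{id}}\bigl|\{\bar P \in \bar{\mathcal O}_1 : \bar h(\bar P) = \bar P\}\bigr|,
$$
computed in those same references (and expressed there in terms of $d = \gcd(\bar m, \ell+1)$), into the identity displayed immediately above the theorem statement, and solve for $g_L$. The claimed formulas then follow after routine algebraic simplification, with the contribution of the wild part of the ramification filtration at $P_\infty$ supplied by Proposition \ref{stabi2} and the tame part supplied by Corollary \ref{diellecor} in combination with Lemma \ref{obv}.

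The main obstacle is purely notational: one must align the parameters $\bar m, v, w, d, s, r$ of the present statement with the conventions of \cite{garcia-stichtenoth-xing2001, abdon-quoos2004}, where the role of $\cA(P_\infty)$ is played by $PGU(3,\ell)_{\bar P_\infty}$, and confirm that the divisibility and order hypotheses in each case coincide precisely with those under which the cited constructions yield a valid subgroup with the stated fixed-point count.
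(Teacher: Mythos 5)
Your proposal follows essentially the same route as the paper: the paper likewise sets $L=\pi^{-1}(\bar G)$ so that $L_\Lambda=\Lambda$ and $|L|=(\ell^2-\ell+1)\bar m p^{v+w}$, plugs the fixed-point counts from \cite[Prop.~4.6]{garcia-stichtenoth-xing2001} and \cite[Thm.~2]{abdon-quoos2004} into the Riemann--Hurwitz identity obtained from Lemma \ref{stab11}, Proposition \ref{stabi2}, Corollary \ref{diellecor} and Lemma \ref{obv}, and solves for $g_L$. The only point to watch is your side remark that $3\mid\bar m$ by itself forces $\bar G\not\subset\pi(\Omega)$ --- this uses $3\mid\ell+1$ (so that $3\nmid(\ell^2-1)/\mu_1$) --- but it does not affect the genus computation, which is the same as the paper's.
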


\section{Curves $\cX/L$ with ${\bar L}$ subgroup of a group of type (B)}

In this  section we investigate subgroups $L$ of $\aut(\cX)$ with $L=\Sigma_1\times \Sigma_2$, where $\Sigma_1$ is contained  $\Gamma$, $\Sigma_2$ is a subgroup of $\Lambda$, and ${\bar L}$ is a subgroup of a group of type (B). 
To this end, we determine a subgroup $\Omega$ of $\Gamma$ such that $\pi(\Omega)$ is contained in a group of type (B) and $\Omega\cap \Lambda=\{id\}$. 

The construction of $\Omega$ requires some technical preliminaries, especially on the Singer groups of $PGU(3,\ell)$. As in \cite{cossidente-korchmaros-torres1999}, we use a representation of a Singer group up to conjugation in $GL(3,q^2)$. This will allow us to deal with diagonal matrices, thus avoiding more involved matrix computation.

By \cite[Prop. 4.6]{cossidente-korchmaros-torres1999} there exists a matrix $A_1$ in $GL(3,q^2)\setminus GL(3,\ell^2)$ such that $A_1^tD\sigma(A_1)=D_1$, with
$$
D_1=\begin{pmatrix}0 & 1 & 0 \\ 0 & 0 & 1 \\ 1 & 0 & 0\end{pmatrix}.
$$
Then  a matrix $M$ is in $SU(3,\ell)$ if and only if $M_1=A_1^{-1}MA_1$ is such that 
\begin{equation}\label{cond1}
M_1^tD_1\sigma(M_1)=D_1,\quad \det(M_1)=1.
\end{equation}
Also, it is straightforward to check that the points of ${\mathbb P}^2(\K)$ whose homogeneous coordinates are the columns of $A_1$ lie in $\cH(\fqq)\setminus \cH(\fls)$.

Now we construct a group $\Omega_1$ of $GL(3,q^2)$, contained in the conjugate subgroup of $SU(3,\ell)$ by $A_1$.
Let ${\mu_2}=\gcd(\ell^2-\ell+1,3)$. It is easily seen that either ${\mu_2}=3$ or ${\mu_2}=1$ according to whether $\ell \equiv 2\pmod 3$ or not. Let $\Pi_{\frac{\ell^2-\ell+1}{{\mu_2}}}$ be the group of $(\frac{\ell^2-\ell+1}{{\mu_2}})$-th roots of unity. As $\gcd(\frac{\ell^2-\ell+1}{{\mu_2}},\ell+1)=1$, for each $\lambda \in \Pi_{\frac{\ell^2-\ell+1}{{\mu_2}}}$ there exists a unique $\tilde{\lambda}\in \Pi_{\frac{\ell^2-\ell+1}{{\mu_2}}}$ with $\tilde{\lambda}^{\ell+1}=\lambda^{-\ell}$.
Let $\Omega_1$ be the  group generated by $D_1$ and 
$$
T=\left(
\begin{array}{ccc}
\tilde{w} w & 0 & 0 \\
0 & \tilde{w} w^\ell & 0 \\
0 & 0 & \tilde{w}
\end{array}
\right),
$$
where $w$ is a primitive $(\frac{\ell^2-\ell+1}{{\mu_2}})$-th root of unity.

Let $\Theta=<T>$ and  $\Upsilon_1=<D_1>$. 
It is straightforward to check that $\Omega_1=\Theta\rtimes \Upsilon_1$. Also, every matrix $M_1$ in $\Omega_1$ satisfies \eqref{cond1}, and thus for each $M_1\in \Omega_1$ the matrix $A_1M_1A_1^{-1}$ belongs to $SU(3,\ell)$. 

For a matrix $M_1\in \Omega_1 $ let $\epsilon(M_1)$ be the projectivity in $PGL(4,\ell^2)$ defined by the $4\times 4$ matrix obtained from $A_1M_1A_1^{-1}$ by adding $0,0,1,0$ as a third row and as a third column. Then $\epsilon: \Omega_1\to \Gamma$ is an injective group homomorphism. Let $\Omega=\epsilon(\Omega_1)$. 

Let ${\bar P}_i$ be the point of $\cH(\fqs)\setminus \cH(\fls)$ whose homogeneous coordinates are the elements in the $i$-th column of $A_1$. Then the subgroup $\pi(\epsilon(\Theta))$ is contained in the stabilizer of ${\bar P}_i$ in $PGU(3,\ell)$. Also, the group $\pi(\epsilon(\Upsilon_1))$  acts regularly on  $\{{\bar P}_1,{\bar P}_2, {\bar P}_3\}$.  

We now prove that $\Omega\cap \Lambda=\{id\}$. Let $\alpha\in \Omega \cap \Lambda$.  Since $\pi(\alpha)$ fixes every point in $\cH$, $\alpha \in \epsilon(\Theta)$. Taking into account that every non-trivial element in $\Gamma\cap \Lambda$ has order $3$, and that $3$ does not divide the order of $T$, we obtain that $\alpha=1$. 

The following result then holds.
\begin{lemma}
The subgroup generated by $\Omega$ and $\Lambda$ is the direct product $\Omega\times \Lambda$. The projection $\pi(\Omega\times \Lambda)$ is a subgroup of a group of type {\rm{(B)}} isomorphic to $\Omega_1$.
\end{lemma}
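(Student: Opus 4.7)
The plan is to establish the two assertions in sequence, using the facts already collected in the paragraph preceding the lemma together with two earlier results from the excerpt: that $\Lambda$ centralizes $\Gamma$, and that $\Lambda = \ker\pi$.

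First, I will verify the direct product structure. Since $\Omega\cap\Lambda=\{id\}$ has just been shown, I only need to check that $\Omega$ and $\Lambda$ commute elementwise. But $\Omega\subseteq\Gamma$ by construction, and $\Lambda$ centralizes all of $\Gamma$ (established in the discussion of $\Gamma$ right after Theorem 2.2), so the required commutativity is automatic. Trivial intersection plus elementwise commutativity then gives the internal direct product $\Omega\times\Lambda$ in $\aut(\cX)$.

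Next, for the second assertion, I would first collapse $\pi(\Omega\times\Lambda)$ to $\pi(\Omega)$ using $\Lambda\subseteq\ker\pi$, and then pin down the isomorphism type. The restriction $\pi|_{\Omega}$ has kernel $\Omega\cap\Lambda=\{id\}$ and so is an isomorphism onto its image; composing with the already-established injection $\epsilon:\Omega_1\to\Omega$, which is surjective by the definition of $\Omega$, yields $\pi(\Omega)\cong\Omega_1$. For the containment in a group of type (B), I would exploit the shape of the generators of $\Omega_1$: since $T$ is diagonal, $A_1TA_1^{-1}$ fixes each column of $A_1$, hence $\pi(\epsilon(\Theta))$ lies in the pointwise stabilizer of $\{\bar P_1,\bar P_2,\bar P_3\}$ in $\PGU(3,\ell)$, which is a Singer group $\Sigma$ of order $\ell^2-\ell+1$; and since $D_1$ is the cyclic permutation matrix of the three columns, $\pi(\epsilon(\Upsilon_1))$ acts on this three-point set as $3$-cycles and therefore normalizes $\Sigma$. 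Altogether $\pi(\Omega)\leq N_{\PGU(3,\ell)}(\Sigma)$, which is precisely a group of type (B).

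The only non-routine step is the recognition that the stabilizer of any one of the points $\bar P_i$ in $\PGU(3,\ell)$ is the full Singer group of order $\ell^2-\ell+1$, and that this same Singer group fixes all three points. This comes down to a standard fact about Singer cycles in $\PGU(3,\ell)$: an element of order $\ell^2-\ell+1$ has exactly three fixed points in $\PG(2,q^2)$, forming a single Frobenius orbit over $\fls$, so the stabilizers of $\bar P_1$, $\bar P_2$, $\bar P_3$ coincide. Once this is in hand, the remainder of the argument is purely formal.
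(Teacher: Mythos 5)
Your proof is correct and follows essentially the same route as the paper, which establishes the lemma via the immediately preceding observations: $\Omega\cap\Lambda=\{id\}$ plus the fact that $\Lambda$ centralizes $\Gamma$ give the direct product, and the facts that $\pi(\epsilon(\Theta))$ stabilizes each $\bar P_i\in\cH(\fqs)\setminus\cH(\fls)$ while $\pi(\epsilon(\Upsilon_1))$ permutes $\{\bar P_1,\bar P_2,\bar P_3\}$ place $\pi(\Omega)$ inside the normalizer of a Singer group. You merely spell out a few steps the paper leaves implicit (the orbit--stabilizer identification of the point stabilizer with the Singer group, and the injectivity of $\pi|_{\Omega}$), all of which are sound.
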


Let ${\bar P}_1=(x_i,y_i,1)$. As ${\bar P}_1\in \cH(\fqq)\setminus \cH(\fls)$, up to a rearrangement of the indexes we can assume that $x_2=x_1^{\ell^2}, y_2=y_1^{\ell^2}$, and 
$x_3=x_1^{\ell^4}, y_3=y_1^{\ell^4}$.

For any point $Q_i=(x_i,y_i,z_0,1)$  of $\cX$ such that $\phi(Q_i)=\bar P_i$, 
the image $\epsilon(T)(Q_i)$ of $Q_i$ by $\epsilon(T)$ is the point $(x_i,y_i,\frac{z_0}{a_{31}x_i+a_{32}y_i+a_{33}}, 1),$ where
$(a_{31},a_{32},a_{33})$ is the third row of the matrix $ A_1TA_1^{-1}$.
Let $s_i=a_{31}x_i+a_{32}y_i+a_{33}$. Since $T$ has order $(\ell^2-\ell+1)/\mu_2$, we have that $s_i^{(\ell^2-\ell+1)/{\mu_2}}=1$.
Note that $s_1^{\ell^2}=s_2$ and $s_1^{\ell^4}=s_3$ hold.
%
%

\begin{lemma} Any $s_i$ is a primitive $\frac{\ell^2-\ell+1}{{\mu_2}}$-th root of unity.
\end{lemma}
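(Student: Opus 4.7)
The plan is first to identify each $s_i$ with the $(i,i)$-diagonal entry of $T$, and then to deduce primitivity by a short number-theoretic argument modulo $N := (\ell^2-\ell+1)/\mu_2$.

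For the first step, I would observe that the $i$-th column of $A_1$ is a scalar multiple of the vector $(x_i,y_i,1)^t$ representing $\bar P_i$ in homogeneous coordinates. Since $T$ is diagonal, $(x_i,y_i,1)^t$ is therefore an eigenvector of $A_1TA_1^{-1}$ with eigenvalue $T_{ii}$; reading off the third component of the image yields
$$s_i = a_{31}x_i + a_{32}y_i + a_{33} = T_{ii}.$$
Explicitly this gives $s_1 = \tilde w\, w$, $s_2 = \tilde w\, w^\ell$, and $s_3 = \tilde w$.

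Next, I would exploit the relations $s_2 = s_1^{\ell^2}$ and $s_3 = s_1^{\ell^4}$ noted just before the lemma. Since $\gcd(\ell,\ell^2-\ell+1)=1$ and $N\mid \ell^2-\ell+1$, we have $\gcd(\ell,N)=1$, so $\zeta\mapsto\zeta^\ell$ is an order-preserving permutation of $\Pi_N$. Hence $s_1$, $s_2$ and $s_3$ share the same multiplicative order, and it suffices to prove that $s_3=\tilde w$ is a primitive $N$-th root of unity.

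Writing $\tilde w=w^a$, the defining identity $\tilde w^{\ell+1}=w^{-\ell}$ translates into
$$a(\ell+1)\equiv -\ell\pmod{N}.$$
Any common divisor of $a$ and $N$ must then divide $\ell$, so by $\gcd(\ell,N)=1$ it must equal $1$; thus $\gcd(a,N)=1$ and $\ord(\tilde w)=N$. The only step that needs a bit of unpacking is the initial identification $s_i=T_{ii}$, which rests on the specific normalization of $A_1$ and the shape of $\epsilon$; once it is in place, the rest is routine modular arithmetic.
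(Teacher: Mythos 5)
Your proof is correct, and it takes a genuinely different route from the paper's. You identify each $s_i$ explicitly: since the $i$-th column of $A_1$ represents $\bar P_i=(x_i,y_i,1)$, it is an eigenvector of $A_1TA_1^{-1}$ with eigenvalue $T_{ii}$, and reading off the last coordinate of the image of $(x_i,y_i,1)^t$ gives $s_i=a_{31}x_i+a_{32}y_i+a_{33}=T_{ii}$, so $s_1=\tilde w w$, $s_2=\tilde w w^{\ell}$, $s_3=\tilde w$. Primitivity then follows from the congruence $a(\ell+1)\equiv-\ell\pmod{N}$, $N=(\ell^2-\ell+1)/\mu_2$, together with $\gcd(\ell,N)=1$; in fact, once you know $s_1=w^{a+1}$ you can finish even faster, since $(a+1)(\ell+1)\equiv\ell+1-\ell=1\pmod{N}$ shows directly that $a+1$ is a unit modulo $N$, and the Frobenius relations $s_2=s_1^{\ell^2}$, $s_3=s_1^{\ell^4}$ transfer the order to the other two. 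The paper argues geometrically instead: if $s_1^m=1$ then also $s_2^m=s_3^m=1$, so $\epsilon(T)^m$ fixes pointwise the three fibres $\phi^{-1}(\bar P_i)$, hence fixes pointwise three non-coplanar lines of $PG(3,\K)$ and must be the identical projectivity, and injectivity of $\epsilon$ forces $N\mid m$. Your computation is more elementary, avoids the incidence argument entirely, and yields the explicit values of the $s_i$ as a by-product; the paper's argument buys independence from the precise normalization of the columns of $A_1$, at the cost of invoking the projective-geometric fact about fixed lines.
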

\begin{proof} It is enough to prove the assertion for $i=1$. Assume that $s_1^m=1$. Then also $s_2^m=1$ and $s_3^m=1$ hold. Therefore, $\epsilon(T)^m$ fixes every point $Q$ such that $\phi(Q)={\bar P}_i$ for some $i=1,2,3$. Since $\ell^2-\ell+1>3$,   the three lines joining $P_\infty$ to the points $(x_i,y_i,0,1)$,  $i=1,2,3$, are fixed by $\epsilon(T)^m$ pointwise. As these three lines are not coplanar,  $\epsilon(T)^m$ is  the identical projectivity of $PG(3,{\mathbb K})$. This shows that $\ell^2-\ell+1$ divides $m$, and the proof is complete.
\end{proof}

%
%

Subgroups of the normalizers of a Singer group of $PGU(3,n)$ have been classified up to conjugacy in \cite[Chapter 4]{SHORT}, see also \cite[Lemma 4.1]{cossidente-korchmaros-torres2000}. As a straightforward consequence, the following result holds.
\begin{lemma}\label{lisst} The following is a complete list of subgroups of $\Omega$, up to conjugacy.
\begin{itemize}
\item[(a)] For every divisor $d$ of $(\ell^2-\ell+1)/{\mu_2} $, the cyclic subgroup of $\epsilon(\Theta)$ of order $d$, i.e. the subgroup generated by $\epsilon(T)^{(\ell^2-\ell+1)/d{\mu_2}}$.
\item[(b)] For every divisor $d$ of $(\ell^2-\ell+1)/{\mu_2} $, the subgroup of order $3d$ which is the semidirect product of 
the cyclic subgroup of $\epsilon(\Theta)$ of order $d$ with $\epsilon(\Upsilon_1)$.
\end{itemize}
\end{lemma}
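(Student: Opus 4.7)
My plan is to reduce the statement to a standard subgroup classification for a group of shape $C_n \rtimes C_3$ with $\gcd(n,3)=1$. First I would make the structure of $\Omega$ explicit: since $\epsilon$ is an injective homomorphism and $\Omega_1 = \Theta \rtimes \Upsilon_1$ with $\Theta = \langle T\rangle$ cyclic of order $n:=(\ell^2-\ell+1)/\mu_2$ and $\Upsilon_1 = \langle D_1\rangle$ cyclic of order $3$, one gets $\Omega \cong C_n \rtimes C_3$. Moreover, $\gcd(n,3)=1$ is immediate from the very definition of $\mu_2 = \gcd(3,\ell^2-\ell+1)$. A direct computation shows that $D_1 T D_1^{-1}$ is a non-trivial power of $T$ (it is obtained from $T$ by cyclically permuting the diagonal entries), so the action of $\epsilon(\Upsilon_1)$ on $\epsilon(\Theta)$ is faithful and of order $3$.

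Next, let $H$ be any subgroup of $\Omega$ and set $N = H \cap \epsilon(\Theta)$. Because $\epsilon(\Theta)$ is cyclic, $N$ is the unique subgroup of $\epsilon(\Theta)$ of order $d:=|N|$, and for the same reason (conjugation in $\Omega$ sends $\epsilon(\Theta)$ to itself and preserves orders) $N$ is actually normal in $\Omega$, not merely in $H$. The quotient $H/N$ embeds into $\Omega/\epsilon(\Theta) \cong C_3$, so it is either trivial or isomorphic to $C_3$.

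I would then handle the two cases separately. If $H/N$ is trivial, then $H=N$ is the cyclic subgroup of $\epsilon(\Theta)$ of order $d \mid n$, which gives case (a); no conjugation is needed. If $H/N \cong C_3$, then $|H|=3d$ with $\gcd(3,d)=1$, so by Schur--Zassenhaus $H = N \rtimes S$ for some $3$-Sylow $S$ of $H$. By Sylow's theorem applied to $\Omega$, every $3$-Sylow of $\Omega$ is conjugate to $\epsilon(\Upsilon_1)$, so there exists $g \in \Omega$ with $gSg^{-1} = \epsilon(\Upsilon_1)$; since $N$ is normal in $\Omega$, conjugation by $g$ yields $gHg^{-1} = N \rtimes \epsilon(\Upsilon_1)$, which is precisely the subgroup in case (b).

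I do not see any serious obstacle here; the only things to be careful about are verifying that the $C_3$-action on $\epsilon(\Theta)$ is faithful (so that the semidirect product really is non-abelian, and the described subgroups in (b) are pairwise non-conjugate to those in (a)), and verifying that $N$ is normal in the whole of $\Omega$ and not only in $H$ (so that conjugation in $\Omega$ keeps the $\epsilon(\Theta)$-part of $H$ fixed). As the paper suggests, one may alternatively just invoke the classification of subgroups of the normalizer of a Singer group of $\PGU(3,\ell)$ from \cite[Chapter 4]{SHORT} and \cite[Lemma 4.1]{cossidente-korchmaros-torres2000}, since $\pi(\Omega)$ sits inside such a normalizer and $\pi|_\Omega$ is injective by $\Omega\cap\Lambda=\{id\}$.
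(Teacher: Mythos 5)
Your proof is correct, but it takes a genuinely different route from the paper: the paper gives no argument at all beyond citing the classification of subgroups of the normalizer of a Singer group in \cite[Chapter 4]{SHORT} and \cite[Lemma 4.1]{cossidente-korchmaros-torres2000} and declaring the lemma a ``straightforward consequence,'' whereas you supply a self-contained elementary proof from the structure $\Omega\cong C_n\rtimes C_3$ with $n=(\ell^2-\ell+1)/\mu_2$. Your argument is sound: $N=H\cap\epsilon(\Theta)$ is characteristic in the cyclic normal subgroup $\epsilon(\Theta)$, hence normal in all of $\Omega$; $H/N$ embeds in $\Omega/\epsilon(\Theta)\cong C_3$; and in the nontrivial case Schur--Zassenhaus (or just the fact that $N$ is a normal Hall subgroup of $H$) plus Sylow conjugacy in $\Omega$ moves the order-$3$ complement onto $\epsilon(\Upsilon_1)$ while fixing $N$. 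The pairwise non-conjugacy of the listed subgroups also comes for free since their orders are all distinct ($3\nmid d$ for every $d\mid n$). What your approach buys is independence from the external classification and transparency about exactly which group-theoretic facts are used; what the paper's citation buys is brevity and consistency with the conjugacy classification it reuses elsewhere (e.g.\ in the type (B) orbit analysis). Two small points worth tightening: the coprimality $\gcd(n,3)=1$ is not literally ``immediate from the definition of $\mu_2$'' --- you need the one-line check that $9\nmid \ell^2-\ell+1$ (write $\ell=3k+2$ and compute $\ell^2-\ell+1=3(3k^2+3k+1)$) --- and the faithfulness of the $C_3$-action on $\epsilon(\Theta)$, which you correctly flag as needing verification, follows from $D_1TD_1^{-1}=T^{\ell-1}$ together with the fact that $\ell-1\not\equiv 1\pmod n$ whenever $n>1$ (since $\gcd(\ell^2-\ell+1,\ell-2)$ divides $3$). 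Neither point is a gap, only a place where the write-up should show the computation.
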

We deal separately with cases (a) and (b) of Lemma \ref{lisst}.

\subsection{$L=\Sigma_1\times \Sigma_2$, $\Sigma_1< \epsilon(\Theta)$, $\Sigma_2<\Lambda$}\label{jji}
Let $\Sigma_1=<\epsilon(T)^{i_1}>$, with $i_1= (\ell^2-\ell+1)/d{\mu_2}$, and let $\Sigma_2$ be the group generated by the projectivity defined by the diagonal matrix $[1,1,\beta^{i_2},1]$, with $\beta$ a primitive $(\ell^2-\ell+1)$-th root of unity and $i_2$ a divisor of $\ell^2-\ell+1$. 
Let $d_2=(\ell^2-\ell+1)/i_2$.
Without loss of generality assume that $s_1=\beta^{3^k}$, with $k=0$ when ${\mu_2}=1$ and $k>0$ for ${\mu_2}=3$. 
In order to compute integers $l_{\bar h,\bar P}$ as in Proposition \ref{dielleprop}, we need to investigate the action of $\Sigma_1$ on the orbits of $\phi^{-1}({\bar P}_i)$ under  $\Sigma_2$. Fix $Q_1=(x_1,y_1,z_0,1)\in \phi^{-1}({\bar P}_1)$.
The orbits of $\phi^{-1}({\bar P}_1)$ under the action of $\Sigma_2$ are 
$$
\Delta_j=\{(x_1,y_1,\beta^jz_0,1),(x_1,y_1,\beta^{j+i_2}z_0,1),\ldots, (x_1,y_1,\beta^{j+(d_2-1)i_2}z_0,1)\},$$
with  $j=0,\ldots,i_2-1$.
For $1\le t \le d-1$, the orbit $\Delta_j$ is fixed by $\epsilon(T)^{ti_1}$ if and only if 
$$
\beta^js_1^{-ti_1}=\beta^{j+ui_2},\quad \text{for some }u \in {\mathbb Z},
$$
that is,
%
$$
3^kti_1= v(\ell^2-\ell+1 )-ui_2, \quad \text{for some }u, v\in {\mathbb Z}.
$$
Equivalently,
\begin{equation}\label{divi}
i_2| 3^kti_1.
\end{equation}
Similarly it can be proved that $\epsilon(T)^{ti_1}$ fixes any orbit of $\phi^{-1}({\bar P}_2)$ (resp. $\phi^{-1}({\bar P}_3)$) under $\Sigma_2$
if and only if $i_2| 3^k\ell^2ti_1\quad  \text{(resp. }i_2| 3^k\ell^4ti_1 \text{)}.$
Since $\gcd(i_2,\ell)=1$,  either $\epsilon(T)^{ti_1}$ fixes all the orbits of $\phi^{-1}(\{{\bar P}_1 ,{\bar P}_2, {\bar P}_3\})$ under $\Sigma_2$ or none, according to whether \eqref{divi} holds or not. 

If $3$ does not divide $i_2$, then $\eqref{divi}$ is equivalent to $i_2| ti_1$. Therefore, the number of non-trivial  elements in $\Sigma_1$ fixing one (and hence every) orbit
of $\phi^{-1}(\{{\bar P}_1 ,{\bar P}_2, {\bar P}_3\})$ under $\Sigma_2$ is equal to the number of common multiples of $i_1$ and $i_2$ that are strictly less than $(\ell^2-\ell+1)/{\mu_2}$. If ${\rm{lcm}}(i_1,i_2)\le (\ell^2-\ell+1)/{\mu_2}$, then this number is $\frac{\ell^2-\ell+1}{{\mu_2}{\rm{lcm}}(i_1,i_2)}-1$; if ${\rm{lcm}}(i_1,i_2)= \ell^2-\ell+1$, then no orbit is fixed by a non-trivial element in $\Sigma_1$.

If $3$ divides $i_2$, then $\eqref{divi}$ is equivalent to $\frac{i_2}{3}| ti_1$. Arguing as in the previous case, it can be deduced that 
the number of non-trivial  elements in $\Sigma_1$ fixing one (and hence every) orbit
of $\phi^{-1}(\{{\bar P}_1 ,{\bar P}_2, {\bar P}_3\})$ under $\Sigma_2$ is  $\frac{\ell^2-\ell+1}{{\mu_2}{\rm{lcm}}(i_1,i_2/3)}-1$.

The last term of $e_L$ as in Proposition \ref{dielleprop}, can be written as
$$
|L_\Lambda|n_2=|\Sigma_2|\sum_{{\bar h}\in {\bar \Sigma_1}, {\bar h}\neq id}\,\,\,\sum_{{\bar P}\in \{\bar P_1,\bar P_2,\bar P_3\},{\bar h}({\bar P})={\bar P}}\,\,\,l_{{\bar h},{\bar P}};
$$
then, it is equal to 
$$
\left\{
\begin{array}{ll}
3(\ell^2-\ell+1)(\frac{\ell^2-\ell+1}{{\mu_2}{\rm{lcm}}(i_1,i_2)}-1) &\text{if } 3 \nmid i_2, \,{\rm{lcm}}(i_1,i_2)\le (\ell^2-\ell+1)/{\mu_2},\\
0 &\text{if }3 \nmid i_2,\,  {\rm{lcm}}(i_1,i_2)= \ell^2-\ell+1,\\
3(\ell^2-\ell+1)(\frac{\ell^2-\ell+1}{{\mu_2}{\rm{lcm}}(i_1,i_2/3)}-1) &\text{if }3 \mid i_2. \\
\end{array}
\right.
$$

By \eqref{HurTame} and Proposition \ref{dielleprop} the following result holds.
\begin{theorem} \label{thmB1}
Let ${\mu_2}=\gcd(\ell^2-\ell+1,3)$.
\begin{itemize}
\item For $i_1$ divisor of $(\ell^2-\ell+1)/{\mu_2}$, $i_2$ divisor of $\ell^2-\ell+1$ such that $3\nmid i_2$ and  ${\rm{lcm}}(i_1,i_2)\le (\ell^2-\ell+1)/{\mu_2}$, there exists a subgroup $L$ of $\aut(\cX)$ with
$$
g_L=\frac{1}{2}\left( (\ell+2){\mu_2} i_1i_2-(\ell+1){\mu_2} i_1-\frac{3i_1i_2}{{\rm{lcm}}(i_1,i_2)}\right)+1.
$$

\item For $i_1$ divisor of $(\ell^2-\ell+1)/{\mu_2}$, $i_2$ divisor of $\ell^2-\ell+1$ such that $3\nmid i_2$ and  ${\rm{lcm}}(i_1,i_2)=(\ell^2-\ell+1)$, there exists a subgroup $L$ of $\aut(\cX)$ with
$$
g_L=\frac{1}{2}\left( (\ell+2){\mu_2} i_1i_2-(\ell+1){\mu_2} i_1-\frac{3{\mu_2} i_1 i_2}{\ell^2-\ell+1}\right)+1.
$$

\item For $i_1$ divisor of $(\ell^2-\ell+1)/{\mu_2}$, $i_2$ divisor of $\ell^2-\ell+1$ such that $3\mid i_2$, there exists a subgroup $L$ of $\aut(\cX)$ with
$$
g_L=\frac{1}{2}\left( (\ell+2){\mu_2} i_1i_2-(\ell+1){\mu_2} i_1-\frac{3i_1i_2}{{\rm{lcm}}(i_1,i_2/3)}\right)+1.
$$

\end{itemize}
\end{theorem}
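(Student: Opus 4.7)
The plan is to apply the tame Hurwitz genus formula \eqref{HurTame}, expand $e_L$ via Proposition \ref{dielleprop}, and substitute the case-by-case value of $|L_\Lambda| n_2$ already obtained in the discussion preceding the statement. Tameness of $L$ is automatic: $|\Sigma_1|$ divides $(\ell^2-\ell+1)/\mu_2$ and $|\Sigma_2|$ divides $\ell^2-\ell+1$, both coprime to $p$ since $\ell=p^h$.

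First I would record the structural data: from $\Omega\cap\Lambda=\{id\}$ together with $\Sigma_2\subseteq\Lambda$, one has $L_\Lambda=\Sigma_2$, so $|L_\Lambda|=d_2=(\ell^2-\ell+1)/i_2$ and $|L|=(\ell^2-\ell+1)^2/(\mu_2 i_1 i_2)$. Next I would observe that $n_1=0$: the subgroup $\bar L=\pi(\Sigma_1)$ lies in $\pi(\epsilon(\Theta))$, which by construction stabilizes only the three points $\bar P_1,\bar P_2,\bar P_3$ of $\bar{\mathcal O}_2=\cH(\fqs)\setminus\cH(\fls)$, so no non-trivial element of $\bar L$ fixes any point of $\bar{\mathcal O}_1$. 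Combined with the three expressions for $|L_\Lambda| n_2$ coming from the divisibility criterion \eqref{divi}, Proposition \ref{dielleprop} yields
\[
e_L=(d_2-1)(\ell^3+1)+|L_\Lambda| n_2.
\]

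The last step is purely algebraic: substitute $e_L$ into \eqref{HurTame}, factor $\ell^3+1=(\ell+1)(\ell^2-\ell+1)$ to cancel the common factor on both sides, and solve for $g_L$. After multiplying through by $\mu_2 i_1 i_2/(\ell^2-\ell+1)$, the key simplification relies on the elementary identity
\[
(\ell+1)^2(\ell-1)+3=(\ell+2)(\ell^2-\ell+1),
\]
which converts the non-polynomial combination $(\ell+1)(\ell^2-1)\mu_2 i_1 i_2/(\ell^2-\ell+1)$ into $(\ell+2)\mu_2 i_1 i_2-3\mu_2 i_1 i_2/(\ell^2-\ell+1)$. In cases (i) and (iii), the leftover $3\mu_2 i_1 i_2/(\ell^2-\ell+1)$ is cancelled by the ``$+3$'' coming from the ``$-1$'' inside $|L_\Lambda| n_2$, leaving the claimed formulas with denominators ${\rm{lcm}}(i_1,i_2)$ and ${\rm{lcm}}(i_1,i_2/3)$ respectively; in case (ii) that ``$-1$'' is absent and the surviving term yields the slightly different summand $3\mu_2 i_1 i_2/(\ell^2-\ell+1)$. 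I do not anticipate a genuine obstacle: the substantive combinatorial work, namely the classification of orbits $\Delta_j$ fixed by $\epsilon(T)^{ti_1}$, has already been carried out above, and what remains is the routine bookkeeping governed by the cubic identity just displayed.
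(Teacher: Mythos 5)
Your proposal is correct and takes essentially the same route as the paper: the paper's proof of Theorem \ref{thmB1} is exactly the orbit analysis of Section \ref{jji} (yielding the three case-by-case values of $|L_\Lambda|n_2$ via the divisibility criterion \eqref{divi}) substituted into \eqref{HurTame} and Proposition \ref{dielleprop}, with $L_\Lambda=\Sigma_2$ and $n_1=0$ because every non-trivial element of the Singer subgroup $\pi(\epsilon(\Theta))$ fixes exactly $\{\bar P_1,\bar P_2,\bar P_3\}\subseteq \bar{\mathcal O}_2$. Your algebraic reduction via the identity $(\ell+1)^2(\ell-1)+3=(\ell+2)(\ell^2-\ell+1)$ reproduces all three displayed formulas, including the surviving term $3\mu_2 i_1 i_2/(\ell^2-\ell+1)$ in the case ${\rm{lcm}}(i_1,i_2)=\ell^2-\ell+1$ where $|L_\Lambda|n_2=0$.
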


\subsection{$L=(\Sigma_1\rtimes \epsilon(\Upsilon_1)) \times \Sigma_2)$, $\Sigma_1< \epsilon(\Theta)$, $\Sigma_2<\Lambda$}
Here we assume that $p\neq 3$.
Let $\Sigma_1=<\epsilon(T)^{i_1}>$, with $i_1= (\ell^2-\ell+1)/d{\mu_2}$. 
Let $\Sigma_2$ be the group generated by the projectivity defined by the diagonal matrix $[1,1,\beta^{i_2},1]$, with $\beta$ a primitive $(\ell^2-\ell+1)$-th root of unity and $i_2$ a divisor of $\ell^2-\ell+1$;
let $d_2=(\ell^2-\ell+1)/i_2$.

The action of $\pi(L)$ on $\cH$ is described in \cite{cossidente-korchmaros-torres2000}. 
\begin{lemma}[Proposition 4.2 in \cite{cossidente-korchmaros-torres2000}] If ${\mu_2}=1$, then the group $\pi(L)$ has $3$ short orbits, namely  $\{\bar P_1,\bar P_2,\bar P_3\}$ and $2$ short orbits of size $d$ consisting of $\fls$-rational points of $\cH$. 
If ${\mu_2}=3$, then the only short orbit of $\pi(L)$ is $\{\bar P_1,\bar P_2,\bar P_3\}$.
\end{lemma}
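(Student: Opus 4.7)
Since $\pi$ has kernel $\Lambda$, and $\Sigma_2\subset\Lambda$ while $\Sigma_1$ and $\epsilon(\Upsilon_1)$ meet $\Lambda$ trivially, $\pi$ restricts to an isomorphism $\Sigma_1\rtimes\epsilon(\Upsilon_1)\to\pi(L)$, so $|\pi(L)|=3d$. In the $A_1$-basis, $\pi(\Sigma_1)$ is generated by a diagonal matrix while $\pi(\epsilon(D_1))$ cyclically permutes the three coordinate points; hence $\{\bar P_1,\bar P_2,\bar P_3\}$ is a single $\pi(L)$-orbit of length $3$, with stabilizer $\pi(\Sigma_1)$, and is short whenever $d>1$.

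To hunt for further short orbits I would enumerate the fixed points on $\cH$ of the remaining non-trivial elements of $\pi(L)$. Because $\gcd(\ell-1,\ell^2-\ell+1)=1$, the three eigenvalues of any $T^k\ne \mathrm{id}$ in $\pi(\Sigma_1)$ are pairwise distinct, so $T^k$ fixes on $\PP^2$ only $\bar P_1,\bar P_2,\bar P_3$. The remaining elements have the form $T^iD_1^j$ with $j\in\{1,2\}$. A direct matrix calculation gives $(T^iD_1^j)^3=\det(T^iD_1^j)\cdot I=I$ (since $\det T=\det D_1=1$), so each such element has order $3$, with eigenvalues $1,\omega,\omega^2$ for a primitive cube root of unity $\omega\in\K$ and eigenvectors $(\lambda_1\lambda_2,\mu\lambda_2,\mu^2)$, $\mu\in\{1,\omega,\omega^2\}$, where $\lambda_1,\lambda_2$ are the first two diagonal entries of $T^i$.

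The central computation is to substitute these eigenvectors into the Hermitian equation $X_1X_2^\ell+X_2X_3^\ell+X_3X_1^\ell=0$ of $\cH$ in the $A_1$-basis. Using $\lambda_1\lambda_2=\tilde{w}^{-i}$, $\tilde{w}^{\ell+1}=w^{-\ell}$, and the order of $w$, one verifies the identities $\lambda_1\lambda_2^\ell=1$ and $(\lambda_1\lambda_2)^\ell=\lambda_2$, and the Hermitian form collapses to $\lambda_2\bigl(\mu^\ell+\mu^{2\ell+1}+\mu^2\bigr)$. For $\mu=1$ this equals $3\lambda_2\ne 0$ (since $p\ne 3$); for $\mu\in\{\omega,\omega^2\}$ a case split on $\ell\pmod 3$ shows that it vanishes precisely when $\ell\equiv 1\pmod 3$, i.e.\ when $\mu_2=1$.

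Consequently, if $\mu_2=3$ then no $T^iD_1^j$ fixes any point of $\cH$, and $\{\bar P_1,\bar P_2,\bar P_3\}$ is the only short orbit. If $\mu_2=1$, each of the $2d$ elements $T^iD_1^j$ fixes exactly two points of $\cH$; the orbit-stabilizer theorem then forces these to assemble into precisely $2$ orbits of length $d$, each with stabilizer a cyclic group of order $3$. The main obstacle is the final verification that these orbits lie in $\cH(\fls)$: one must pull the fixed points back through the change of basis $A_1$ and check $\fls$-rationality, a computation relying on the compatibility of $\omega,w,\tilde{w}$ with the $\fls$-Frobenius, but it follows once the explicit eigenvectors are unwound.
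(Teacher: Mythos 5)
The paper offers no proof of this lemma at all: it is quoted verbatim from Proposition 4.2 of Cossidente--Korchm\'aros--Torres, so your direct verification is by construction a different route, and a useful one. The computation checks out. Since $\pi$ kills $\Sigma_2$ and is injective on $\Omega$, $|\pi(L)|=3d$; nontrivial elements of $\pi(\Sigma_1)$ have three distinct eigenvalues because $\gcd(\ell-1,\ell^2-\ell+1)=\gcd(\ell,\ell^2-\ell+1)=1$, so they fix only the triangle; $\det T=1$ gives $\tilde w^{-3}=w^{\ell+1}$, which together with $\tilde w^{\ell+1}=w^{-\ell}$ and $w^{\ell^2-\ell+1}=1$ yields exactly your identities $\lambda_1\lambda_2^{\ell}=1$ and $(\lambda_1\lambda_2)^{\ell}=\lambda_2$; the Hermitian form $X_1X_2^{\ell}+X_2X_3^{\ell}+X_3X_1^{\ell}$ does collapse to $\lambda_2(\mu^{\ell}+\mu^{2\ell+1}+\mu^2)$ on the eigenvectors, which is $3\lambda_2\neq 0$ for $\mu=1$ and, for $\mu$ a primitive cube root of unity, vanishes precisely when $\ell\equiv 1\pmod 3$, i.e.\ when $\mu_2=1$ (recall $p\neq3$ is in force in this subsection, so $\ell\equiv 0\pmod 3$ is excluded). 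The incidence count $2\cdot 2d=4d$, combined with the fact that a stabilizer outside the triangle cannot meet $\pi(\Sigma_1)$ nontrivially and hence has order exactly $3$, then forces exactly two further orbits of length $d$; and the case $j=2$ reduces to $j=1$ because $\{T^iD_1^2\}$ is the set of inverses of the $\{T^{i'}D_1\}$.

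The one step you leave genuinely open is the $\fls$-rationality of those two orbits, and ``unwinding the eigenvectors through $A_1$'' is the painful way to close it, since $A_1$ is only known abstractly. The clean argument: by \cite[Prop.~2.2]{garcia-stichtenoth-xing2001}, quoted in Section~\ref{section3}, every point of $\cH$ fixed by a nontrivial element of $PGU(3,\ell)$ lies in $\cH(\fqs)$; and the stabilizer in $PGU(3,\ell)$ of a point of $\cH(\fqs)\setminus\cH(\fls)$ must also fix its two Frobenius conjugates, hence lies in the pointwise stabilizer of that triangle, which is the Singer cyclic group of order $\ell^2-\ell+1$. When $\mu_2=1$ this order is prime to $3$, so your order-$3$ elements $T^iD_1^j$ cannot fix any point of $\cH(\fqs)\setminus\cH(\fls)$, and their fixed points on $\cH$ land in $\cH(\fls)$ as required. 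With that inserted the proof is complete. (A pedantic remark: for $d=1$ the triangle is a regular orbit rather than a short one, but that imprecision is already present in the quoted statement, and you correctly flag the condition $d>1$.)
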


As a consequence, the last term $|L_\Lambda|n_2$ of $e_L$ as in Proposition \ref{dielleprop} is just
$$
|\Sigma_2|\sum_{{\bar h}\in {\bar \Sigma_1}, {\bar h}\neq id}\,\,\,\sum_{{\bar P}\in \{\bar P_1,\bar P_2,\bar P_3\},{\bar h}({\bar P})={\bar P}}\,\,\,l_{{\bar h},{\bar P}},
$$
which has already been computed in Section \ref{jji}.
We will distinguish the cases $\mu_2=1$ and $\mu_2=3$.

\subsubsection{${\mu_2}=1$}
Apart from $\{\bar P_1,\bar P_2,\bar P_3\}$, the group $\pi(L)$ has further $2$ short orbits of size $d$ consisting of $\fls$-rational points of $\cH$. 
By \eqref{HurTame} and Proposition \ref{dielleprop} the following result holds.
\begin{theorem}\label{thmB2} Let $p\neq 3$, $\gcd(\ell^2-\ell+1,3)=1$.
For $i_1,i_2$ divisors of $(\ell^2-\ell+1)$  there exists a subgroup $L$ of $\aut(\cX)$ with
$$
g_L=\frac{1}{3}\left(\frac{1}{2}\left( (\ell+2) i_1i_2-(\ell+1) i_1-\frac{3i_1i_2}{{\rm{lcm}}(i_1,i_2)}\right)+1\right).
$$
\end{theorem}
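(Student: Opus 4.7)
The plan is to apply the Hurwitz genus formula \eqref{HurTame} together with Proposition~\ref{dielleprop}. The group $L$ is tame (since $p\neq 3$ and $p\nmid\ell^2-\ell+1$ because $\ell\equiv 0\pmod p$), has order $|L|=3d_1d_2$ with $d_1=|\Sigma_1|=(\ell^2-\ell+1)/i_1$ and $d_2=|\Sigma_2|=(\ell^2-\ell+1)/i_2$, and satisfies $L_\Lambda=\Sigma_2$. So \eqref{HurTame} will read $(\ell^3+1)(\ell^2-2)=3d_1d_2(2g_L-2)+e_L$, and everything comes down to evaluating $e_L$ via the decomposition of Proposition~\ref{dielleprop}.

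The orbit structure of $\pi(L)$ on $\cH$ is provided by the preceding lemma (Proposition~4.2 of \cite{cossidente-korchmaros-torres2000}): on $\bar{\mathcal O}_2$ the only short orbit is $\{\bar P_1,\bar P_2,\bar P_3\}$, while on $\bar{\mathcal O}_1=\cH(\fls)$ there are two short orbits of size $d_1$, each with point stabilizer of order $3$. This immediately yields
\[
n_1 \;=\; 2\cdot d_1\cdot(3-1)\;=\;4d_1.
\]
For $n_2$, since $\pi(\epsilon(\Upsilon_1))$ cyclically permutes $\bar P_1,\bar P_2,\bar P_3$, the stabilizer in $\bar L$ of any $\bar P_i$ lies entirely in $\pi(\Sigma_1)$; consequently the contribution of the triangle $\{\bar P_1,\bar P_2,\bar P_3\}$ is identical to the one already worked out for the diagonal group in Section~\ref{jji}. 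Using $\mu_2=1$, hence $3\nmid\ell^2-\ell+1$ and $3\nmid i_2$, that computation gives
\[
|L_\Lambda|\,n_2 \;=\; 3(\ell^2-\ell+1)\!\left(\frac{\ell^2-\ell+1}{\operatorname{lcm}(i_1,i_2)}-1\right).
\]

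Plugging these into Proposition~\ref{dielleprop} and into the Hurwitz relation reduces the proof to algebraic manipulation. The only non-obvious identity I would use in the simplification is
\[
(\ell+1)^{2}(\ell-1)+3 \;=\; (\ell+2)(\ell^{2}-\ell+1),
\]
which, together with the factorization $\ell^3+1=(\ell+1)(\ell^2-\ell+1)$, collapses the rational expression for $2g_L-2$ into the compact form of the theorem. The main obstacle is organizational rather than technical: to be sure that no point of $\cH$ outside the three identified short orbits is stabilized by a non-trivial element of $\pi(L)$, so that $n_1$ and $n_2$ are really exhausted by the contributions above. This is exactly what the short-orbit classification for normalizers of Singer cyclic groups of $\PGU(3,\ell)$ delivers, so no further case analysis should be required.
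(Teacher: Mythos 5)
Your proposal is correct and follows essentially the same route as the paper: it invokes the short-orbit description from Proposition 4.2 of Cossidente--Korchm\'aros--Torres (two orbits of size $d_1$ in $\cH(\fls)$ giving $n_1=4d_1$, plus the triangle $\{\bar P_1,\bar P_2,\bar P_3\}$ whose pointwise stabilizer is exactly $\pi(\Sigma_1)$, so the $n_2$-contribution is the one already computed in Section~\ref{jji}), and then substitutes into \eqref{HurTame} and Proposition~\ref{dielleprop}. The final algebra, including the identity $(\ell+1)^2(\ell-1)+3=(\ell+2)(\ell^2-\ell+1)$, checks out and reproduces the stated genus.
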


\subsubsection{${\mu_2}=3$}
The only short orbit of $\pi(L)$ is $\{\bar P_1,\bar P_2,\bar P_3\}$.
By \eqref{HurTame} and Proposition \ref{dielleprop} the following result holds.
\begin{theorem}\label{thmB3} Let $\gcd(\ell^2-\ell+1,3)=3$.
\begin{itemize}
\item For $i_1$ divisor of $(\ell^2-\ell+1)/3$, $i_2$ divisor of $\ell^2-\ell+1$ such that $3\nmid i_2$ and  ${\rm{lcm}}(i_1,i_2)\le (\ell^2-\ell+1)/3$, there exists a subgroup $L$ of $\aut(\cX)$ with
$$
g_L=\frac{1}{2}\left( (\ell+2) i_1i_2-(\ell+1) i_1-\frac{i_1i_2}{{\rm{lcm}}(i_1,i_2)}\right)+1.
$$

\item For $i_1$ divisor of $(\ell^2-\ell+1)/3$, $i_2$ divisor of $\ell^2-\ell+1$ such that $3\nmid i_2$ and  ${\rm{lcm}}(i_1,i_2)=(\ell^2-\ell+1)$, there exists a subgroup $L$ of $\aut(\cX)$ with
$$
g_L=\frac{1}{2}\left( (\ell+2) i_1i_2-(\ell+1) i_1-\frac{3 i_1 i_2}{\ell^2-\ell+1}\right)+1.
$$

\item For $i_1$ divisor of $(\ell^2-\ell+1)/3$, $i_2$ divisor of $\ell^2-\ell+1$ such that $3\mid i_2$, there exists a subgroup $L$ of $\aut(\cX)$ with
$$
g_L=\frac{1}{2}\left( (\ell+2) i_1i_2-(\ell+1) i_1-\frac{i_1i_2}{{\rm{lcm}}(i_1,i_2/3)}\right)+1.
$$

\end{itemize}
\end{theorem}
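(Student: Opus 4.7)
The plan is to apply the tame Hurwitz formula \eqref{HurTame} together with Proposition \ref{dielleprop}, reusing the fixed--point counting already carried out in Section \ref{jji}. Since $\epsilon(T)$ has order $(\ell^2-\ell+1)/3$, the subgroup $\Sigma_1=\langle \epsilon(T)^{i_1}\rangle$ has order $(\ell^2-\ell+1)/(3i_1)$; combined with $|\epsilon(\Upsilon_1)|=3$ and $|\Sigma_2|=(\ell^2-\ell+1)/i_2$, this yields
\[
|L|=\frac{(\ell^2-\ell+1)^2}{i_1 i_2}, \qquad L_{\Lambda}=\Sigma_2,\qquad |L_\Lambda|=\frac{\ell^2-\ell+1}{i_2}.
\]

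I first claim $n_1=0$. By the quoted orbit lemma, when $\mu_2=3$ the only short orbit of $\pi(L)$ on $\cH$ is $\{\bar P_1,\bar P_2,\bar P_3\}\subset \bar \cO_2$. Hence no point of $\bar \cO_1$ has non-trivial stabilizer in $\bar L$, forcing $n_1=0$.

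For $n_2$, only the three points $\bar P_1,\bar P_2,\bar P_3$ can be fixed by a non-trivial element of $\bar L$. The non-identity cosets of $\pi(\Sigma_1)$ in $\pi(L)$ are represented by $\epsilon(D_1)$ and $\epsilon(D_1)^2$, which act on the triple as a $3$-cycle and therefore fix none of its points. Consequently only elements of $\pi(\Sigma_1)$ contribute to $n_2$, and $|L_\Lambda|n_2$ coincides verbatim with the quantity computed in Section \ref{jji}, split into the three subcases according to whether $3\mid i_2$ and whether $\mathrm{lcm}(i_1,i_2)$ reaches $(\ell^2-\ell+1)/3$ or equals $\ell^2-\ell+1$.

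Inserting $e_L=(|L_\Lambda|-1)(\ell^3+1)+|L_\Lambda|n_2$ into \eqref{HurTame} and solving for $g_L$ produces the three formulas of the statement after routine simplification. The one algebraic step that makes everything collapse is the identity
\[
(\ell-1)(\ell+1)^2+3=(\ell^2-\ell+1)(\ell+2),
\]
which combines the $(\ell^3+1)(\ell^2-2)$ and $-(\ell^3+1)$ contributions into a clean multiple of $(\ell^2-\ell+1)^2$; once this is applied, the remaining terms divide by $|L|=(\ell^2-\ell+1)^2/(i_1 i_2)$ exactly, and the three cases of $|L_\Lambda|n_2$ translate directly into the three formulas for $g_L$. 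The main obstacle is the orbit analysis of the third paragraph: checking that the coset representatives outside $\pi(\Sigma_1)$ contribute no fixed points in $\bar\cO_1\cup\bar\cO_2$, which is precisely what permits reducing the new computation to the one already carried out in Section \ref{jji}.
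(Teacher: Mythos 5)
Your proposal is correct and follows essentially the same route as the paper: the tame Hurwitz formula \eqref{HurTame} combined with Proposition \ref{dielleprop}, with $n_1=0$ because the only short orbit of $\pi(L)$ for $\mu_2=3$ is $\{\bar P_1,\bar P_2,\bar P_3\}\subset\bar{\mathcal O}_2$, and $|L_\Lambda|n_2$ reduced to the computation of Section \ref{jji} since the elements outside $\pi(\Sigma_1)$ permute the three points cyclically. Your bookkeeping of $|L|$, $|L_\Lambda|$ and the identity $(\ell+1)(\ell^2-1)+3=(\ell^2-\ell+1)(\ell+2)$ reproduce the three stated formulas exactly.
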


\begin{remark}
{\rm{When $L=\pi^{-1}(\bar G)$ with $\bar G$ a group of type (B), then by Corollary \ref{diellecor2} the genus  $g_L$ coincides with  $g_{\bar L}$. All the possibilities for $g_{\bar L}$ are determined in \cite{cossidente-korchmaros-torres2000}. It should be noted that the statement of Proposition 4.2(3) in \cite{cossidente-korchmaros-torres2000} contains a misprint, as $(q^2-q+1-3n)/6n$ should read $(q^2-q+1+3n)/6n$.}}
\end{remark}

\section{Curves $\cX/L$ with ${\bar L}$ subgroup of a group of type (C)}
In this section we investigate subgroups $L$ of $\aut(\cX)$ with $L=\Sigma_1\times \Sigma_2$, where $\Sigma_1$ is contained  $\Gamma$, $\Sigma_2$ is a subgroup of $\Lambda$, and ${\bar L}$ is a subgroup of a group of type (C). 
To this end, we determine a subgroup $\Omega$ of $\Gamma$ such that $\pi(\Omega)$ is contained in a group of type (C) and $\Omega\cap \Lambda=\{id\}$.
The construction of $\Omega$ requires some technical preliminaries. In particular, a group conjugate to $SU(3,\ell)$ in $GL(3,q^2)$ needs to be considered.

Let $b,c\in \fls$ be such that $b^{\ell+1}=c^\ell+c=-1$, and let
$$
A_2=\begin{pmatrix}
0 & 0 & \frac{1}{b}\\
\frac{c+1}{b} & \frac{-c}{b^2} & 0\\
\frac{-1}{b} & \frac{1}{b^2} & 0
\end{pmatrix}.
$$
Then $A_2$ is a matrix in $GL(3,\ell^2)$ such that $A_2^tD\sigma(A_2)=I_3$. A matrix $M$ is in $SU(3,\ell)$ if and only if $M_2=A_2^{-1}MA_2$ is such that 
\begin{equation}\label{cond2}
M_2^t\sigma(M_2)=I_3,\quad \det(M_2)=1.
\end{equation}
Let ${\mu_1}$ be the largest power of $3$ dividing  $\ell+1$, and let
 $\Pi_{\frac{\ell+1}{{\mu_1}}}$ be the group of $(\frac{\ell+1}{{\mu_1}})$-th roots of unity. As $\gcd(\frac{\ell+1}{{\mu_1}},3)=1$, for each $\lambda \in \Pi_{\frac{\ell+1}{{\mu_1}}}$ there exists a unique $\tilde{\lambda}\in \Pi_{\frac{\ell+1}{{\mu_1}}}$ with $\tilde{\lambda}^{3}=\lambda$.
Let $\Omega_2$ be the the group generated by the matrices
$$
T_1=
\begin{pmatrix}
\frac{w}{\tilde{w}} & 0 & 0 \\
0 & \frac{1}{\tilde{w}} & 0 \\
0 & 0 & \frac{1}{\tilde{w}}
\end{pmatrix},
T_2=
\begin{pmatrix}
\frac{1}{\tilde{w}} & 0 & 0 \\
0 & \frac{w}{\tilde{w}} & 0 \\
0 & 0 & \frac{1}{\tilde{w}}
\end{pmatrix},
U_1=
\begin{pmatrix}
0 & 1 & 0 \\
0 & 0 & 1 \\
1 & 0 & 0
\end{pmatrix},
U_2=
\begin{pmatrix}
0 & 0 & -1 \\
0 & -1 & 0 \\
-1 & 0 & 0
\end{pmatrix},
$$
where $w$ is a primitive $(\frac{\ell+1}{\mu_1})$-th root of unity.
Let $\Theta_1=<T_1>$, $\Theta_2=<T_2>$, $\Upsilon_1=<U_1>$, $\Upsilon_2=<U_2>$, and $\Upsilon=<U_1,U_2>$.

It is straightforward to check that every matrix in $\Omega_2$ satisfies \eqref{cond2}, and that $\Omega_2=(\Theta_1\times \Theta_2)\rtimes \Upsilon$. Moreover, $\Upsilon$ is isomorphic to $Sym_3$. For a matrix $M_2\in \Omega_2 $ let $\epsilon(M_2)$ be the projectivity in $PGL(4,\ell^2)$ defined by the $4\times 4$ matrix obtained from $A_2M_2A_2^{-1}$ by adding $0,0,1,0$ as a third row and as a third column. Then $\epsilon:\Omega_2\to \Gamma$ is an injective group homomorphism. Let $\Omega=\epsilon(\Omega_2)$. 

Let ${\bar P}_i$ be the point in $PG(2,\ell^2)$ whose homogeneous coordinates are the elements in the $i$-th column of $A_2$. Then the subgroup $\pi(\Omega)$ is contained in the stabilizer of the triangle ${\bar P}_1{\bar P}_2{\bar P}_3$ in $PGU(3,\ell)$.   

We now prove that $\Omega\cap \Lambda=\{id\}$. Let $\alpha\in \Omega \cap \Lambda$.  Since $\pi(\alpha)$ fixes pointwise the set of points in $\cH$ belonging to the triangle ${\bar P}_1{\bar P}_2{\bar P}_3$ in $PGU(3,\ell)$, $\alpha \in \epsilon(\Theta_1\times \Theta_2)$. Taking into account that every non-trivial element in $\Gamma\cap \Lambda$ has order $3$, and that $3$ does not divide the order of $\Theta_1\times \Theta_2$ by construction, we obtain that $\alpha=1$. 

Therefore, the following lemma holds.
\begin{lemma}
The subgroup generated by $\Omega$ and $\Lambda$ is the direct product $\Omega\times \Lambda$. The projection $\pi(\Omega\times \Lambda)$ is a subgroup of a group of type {\rm{(C)}} isomorphic to $\Omega_2$.
\end{lemma}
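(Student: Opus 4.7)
The plan is to verify the two claims of the lemma by a short direct argument, essentially formalizing the computation already sketched just before the statement. I would split the proof into three steps: the triviality of $\Omega\cap\Lambda$, the direct product decomposition, and the analysis of the projection under $\pi$.

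First I would establish that $\Omega\cap\Lambda=\{id\}$. Let $\alpha\in\Omega\cap\Lambda$. Since $\alpha\in\Lambda$, its image $\pi(\alpha)$ is the identity of $PGU(3,\ell)$, so in particular $\pi(\alpha)$ fixes the vertices ${\bar P}_1,{\bar P}_2,{\bar P}_3$. Writing $\alpha=\epsilon(M_2)$ with $M_2\in\Omega_2=(\Theta_1\times\Theta_2)\rtimes\Upsilon$, the permutation of the three vertices induced by $\alpha$ corresponds to the $\Upsilon$-component of $M_2$; hence that component must be trivial, i.e.\ $M_2\in\Theta_1\times\Theta_2$. Now every non-trivial element of $\Gamma\cap\Lambda$ has order $3$ (as recalled in the preliminaries on $\Xi$), whereas $|\Theta_1\times\Theta_2|$ divides $((\ell+1)/\mu_1)^2$, an integer coprime to $3$ by the very definition of $\mu_1$ as the largest power of $3$ dividing $\ell+1$. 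Thus $\alpha$ has order coprime to $3$ and order dividing $3$, forcing $\alpha=id$.

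Next I would deduce the direct product structure. The group $\Lambda$ is central in $\aut(\cX)$ (this is the first theorem of Section~2 cited from \cite{segovia1}), so $\Omega$ and $\Lambda$ centralize each other. A subgroup generated by two mutually centralizing subgroups with trivial intersection is isomorphic to their direct product, so $\langle\Omega,\Lambda\rangle=\Omega\times\Lambda$.

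Finally I would analyze $\pi(\Omega\times\Lambda)$. Since $\ker\pi=\Lambda$, one has $\pi(\Omega\times\Lambda)=\pi(\Omega)$, and the triviality of $\Omega\cap\Lambda$ makes $\pi|_\Omega$ injective. Combined with the fact that $\epsilon:\Omega_2\to\Gamma$ is an injective homomorphism by construction, this yields $\pi(\Omega)\cong\Omega\cong\Omega_2$. To conclude that $\pi(\Omega)$ lies in a subgroup of type (C), I would observe that each generator of $\Omega_2$ either preserves each of the diagonal axes (the $T_i$) or permutes the three basis vectors (the $U_i$); conjugating by $A_2$ translates this into the statement that $\pi(\Omega)$ preserves the triangle ${\bar P}_1{\bar P}_2{\bar P}_3$. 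Since this triangle is self-conjugate with respect to the Hermitian form (as $A_2^tD\sigma(A_2)=I_3$ shows the columns of $A_2$ form a self-polar triple), $\pi(\Omega)$ is contained in the self-conjugate triangle stabilizer, which is a group of type (C). The main technical point, and the only step requiring care, is the coprimality argument in the first step; everything else is a formal consequence of the setup.
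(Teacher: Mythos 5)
Your proof is correct and follows essentially the same route as the paper: you reduce a putative $\alpha\in\Omega\cap\Lambda$ to $\epsilon(\Theta_1\times\Theta_2)$ via the fixed triangle and then invoke the fact that every non-trivial element of $\Gamma\cap\Lambda$ has order $3$ while $|\Theta_1\times\Theta_2|$ is coprime to $3$, exactly as in the discussion preceding the lemma. The remaining points (centrality of $\Lambda$ giving the direct product, injectivity of $\pi|_\Omega$ and of $\epsilon$, and the self-polarity of the triangle ${\bar P}_1{\bar P}_2{\bar P}_3$ coming from $A_2^tD\sigma(A_2)=I_3$) are handled as the paper intends; no gaps.
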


Note that the action of $\pi(\Omega)$ on $\cH$ can be viewed as the action of the group of projectivities defined by the matrices in $\Omega_2$ on the set of points of the plane curve with equation $X^{\ell+1}+Y^{\ell+1}+T^{\ell+1}=0$.

For a divisor $d$ of $(\ell+1)/{\mu_1}$ and for $i=1,2$, let $C_{d}^{(i)}$ be the subgroup of $\epsilon(\Theta_i)$ of order $d$.
We consider subgroups $\Sigma_1$ of $\Omega$ of the following types:
\begin{itemize}
\item[(a)] $C^{(1)}_{d_1}\times C^{(2)}_{d_2}$;

\item[(b)] the cyclic subgroup of order $(\ell+1)/({\mu_1} d_1)$ generated by $\epsilon(T_1)^{d_1}\epsilon(T_2)^{2d_1}$, with $d_1$ a divisor of $(\ell+1)/{\mu_1}$;

\item[(c)] $(C^{(1)}_{d_1}\times C^{(2)}_{d_1})\rtimes \Upsilon_2$;
\item[(d)]  $(C^{(1)}_{d_1}\times C^{(2)}_{d_1})\rtimes \Upsilon_1$;
\item[(e)]  $(C^{(1)}_{d_1}\times C^{(2)}_{d_1})\rtimes \Upsilon$.
\end{itemize}
Cases (a)-(e) are dealt with separately.

\subsection{$L=\Sigma_1\times \Sigma_2$, with $\Sigma_1$ as in (a), $\Sigma_2<\Lambda$, $|\Sigma_2|=d$}
The action of $\pi(\Sigma_1)$ on $\cH$ was investigated in \cite[Example 5.11]{garcia-stichtenoth-xing2001}.
Any non-trivial element in $\pi(\Sigma_1)$ fixes no point in $\bar {\mathcal O}_2$. Moreover,
$$
\sum_{{\bar h} \in \pi(\Sigma_1), {\bar h}\neq id}|\{\bar P \in \bar {\mathcal O}_1\mid \bar h(\bar P)=\bar P\}|=(\ell+1)(d_1+d_2+\gcd(d_1,d_2)-3).
$$
By \eqref{HurTame} and Proposition \ref{dielleprop} the following result holds.
\begin{theorem}\label{thmC1} Let ${\mu_1}$ be the highest power of $3$ which divides $\ell+1$. For any two divisors $d_1,d_2$ of $(\ell+1)/{\mu_1}$, and for any $d$ divisor of $\ell^2-\ell+1$, there exists a subgroup $L$ of $\aut(\cX)$ with
$$
g_L=1+\frac{(\l^3+1)(\l^2-d-1)-d(\l+1)(d_1+d_2+\gcd(d_1,d_2)-3)}{2dd_1d_2}.
$$
\end{theorem}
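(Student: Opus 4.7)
The plan is to apply Proposition \ref{dielleprop} together with the Hurwitz formula \eqref{HurTame} directly, exploiting the fact that the action of $\pi(\Sigma_1)$ on $\cH$ has already been analyzed in the cited reference. First I would set $L=\Sigma_1\times \Sigma_2$ with $\Sigma_1=C^{(1)}_{d_1}\times C^{(2)}_{d_2}\subset \Omega$ and $|\Sigma_2|=d$, so that $|L|=dd_1d_2$. Since $\Omega\cap\Lambda=\{id\}$ was established before this subsection, $L_\Lambda=L\cap\Lambda=\Sigma_2$, whence $|L_\Lambda|=d$ and $\pi$ restricts to an isomorphism $\Sigma_1\to\pi(\Sigma_1)=\bar L$. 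Tameness of $L$ is immediate: $p\mid\ell$, so $p$ is coprime to both $\ell+1$ and $\ell^2-\ell+1$, hence to $|L|$.

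Next I would compute $e_L$ via Proposition \ref{dielleprop}. By the cited result \cite[Example 5.11]{garcia-stichtenoth-xing2001}, no non-trivial element of $\pi(\Sigma_1)$ fixes a point of $\bar{\mathcal O}_2=\cH(\fqs)\setminus\cH(\fls)$, so the contribution $n_2$ vanishes; the same reference supplies
\[
n_1=\sum_{\bar h\in\pi(\Sigma_1),\,\bar h\neq id}|\{\bar P\in\bar{\mathcal O}_1\mid \bar h(\bar P)=\bar P\}|=(\ell+1)(d_1+d_2+\gcd(d_1,d_2)-3).
\]
Substituting these into Proposition \ref{dielleprop} yields
\[
e_L=(d-1)(\ell^3+1)+d(\ell+1)(d_1+d_2+\gcd(d_1,d_2)-3).
\]

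Finally, I would substitute this expression for $e_L$ and $|L|=dd_1d_2$ into the Hurwitz formula \eqref{HurTame} and solve for $g_L$. Grouping the $(\ell^3+1)$-terms as $(\ell^3+1)(\ell^2-2)-(d-1)(\ell^3+1)=(\ell^3+1)(\ell^2-d-1)$ and dividing by $2dd_1d_2$ gives exactly the claimed formula. There is no real obstacle beyond this bookkeeping: the crucial inputs (the structure $L_\Lambda=\Sigma_2$, the injectivity of $\pi$ on $\Sigma_1$, and the ramification data of $\pi(\Sigma_1)$ on $\cH$) are already in hand, and the remainder is a straightforward algebraic manipulation.
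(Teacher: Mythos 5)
Your proposal is correct and follows essentially the same route as the paper: identify $L_\Lambda=\Sigma_2$, import the fixed-point data for $\pi(\Sigma_1)=\pi(C^{(1)}_{d_1}\times C^{(2)}_{d_2})$ from \cite[Example 5.11]{garcia-stichtenoth-xing2001} (no fixed points in $\bar{\mathcal O}_2$, and $n_1=(\ell+1)(d_1+d_2+\gcd(d_1,d_2)-3)$), and then combine Proposition \ref{dielleprop} with \eqref{HurTame}. The algebra checks out and reproduces the stated formula exactly.
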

\subsection{$L=\Sigma_1\times \Sigma_2$, with $\Sigma_1$ as in (b), $\Sigma_2<\Lambda$, $|\Sigma_2|=d$}
By \cite[Example 5.10]{garcia-stichtenoth-xing2001}
any non-trivial element in $\pi(\Sigma_1)$ fixes no point in $\bar {\mathcal O}_2$. Moreover,
$$
g_{\bar L}=1+\frac{\mu_1 d_1(\l-2)(\l+1)}{2(\l+1)}
$$
when $(\ell+1)/({\mu_1} d_1)$ is odd, whereas
$$
g_{\bar L}=1+\frac{\mu_1 d_1(\l-3)(\l+1)}{2(\l+1)}
$$
when $(\ell+1)/({\mu_1} d_1)$ is even.
By  Proposition \ref{dielleprop2} the following result holds.
\begin{theorem}\label{thmC2} Let ${\mu_1}$ be the highest power of $3$ which divides $\ell+1$. 
\begin{itemize}
\item For any divisor $d_1$ of $(\ell+1)/{\mu_1}$ such that $(\ell+1)/({\mu_1} d_1)$ is odd, and for every divisor $d$ of $\ell^2-\ell+1$, there exists a subgroup $L$ of $\aut(\cX)$ with
$$
g_L=\frac{1}{2}\left( {\mu_1} d_1\left(\frac{\ell^2-\ell+1}{d}\right)(\ell^2-d-1)+2\right).
$$
\item For any divisor $d_1$ of $(\ell+1)/{\mu_1}$ such that $(\ell+1)/({\mu_1} d_1)$ is even, and for every divisor $d$ of $\ell^2-\ell+1$, there exists a subgroup $L$ of $\aut(\cX)$ with
$$
g_L=\frac{1}{2}\left( {\mu_1} d_1\left(\frac{\ell^2-\ell+1}{d}\right)(\ell^2-d-1)-{\mu_1} d_1+2\right).
$$
\end{itemize}

\end{theorem}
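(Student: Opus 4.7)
The plan is to apply Proposition \ref{dielleprop2} directly, since the data needed (the genus $g_{\bar L}$ of $\cH/\bar L$ and the fact that no non-trivial element of $\bar L$ fixes a point outside $\bar{\cO}_1$) is already recorded in the paragraph preceding the statement. So the proof will be essentially an algebraic consolidation.

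First I would verify the hypotheses of Proposition \ref{dielleprop2}. Since $|\Sigma_1|=(\ell+1)/(\mu_1 d_1)$ divides $\ell+1$ and $|\Sigma_2|=d$ divides $\ell^2-\ell+1$, both orders are coprime to $p$, so $|L|=d(\ell+1)/(\mu_1 d_1)$ is prime to $p$ and $L$ is tame. By the quoted result \cite[Example 5.10]{garcia-stichtenoth-xing2001}, no non-trivial element of $\pi(\Sigma_1)=\bar L$ fixes a point of $\bar{\cO}_2$, so the second hypothesis is met as well.

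Next I would pin down $L_\Lambda$. As $\Sigma_1\subset \Omega$ and $\Omega\cap \Lambda=\{id\}$ by the lemma preceding the statement, $\Sigma_1\cap \Lambda=\{id\}$, hence $L_\Lambda=\Sigma_2$ and $|L_\Lambda|=d$. Consequently $|\bar L|=(\ell+1)/(\mu_1 d_1)$ and $|L|=d(\ell+1)/(\mu_1 d_1)$.

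Then I would substitute into the formula of Proposition \ref{dielleprop2}:
\[
g_L=g_{\bar L}+\frac{(\ell^3+1)(\ell^2-d-1)-d(\ell^2-\ell-2)}{2|L|}.
\]
Using $\ell^3+1=(\ell+1)(\ell^2-\ell+1)$ and $\ell^2-\ell-2=(\ell+1)(\ell-2)$, the correction term telescopes to
\[
\frac{\mu_1 d_1\bigl[(\ell^2-\ell+1)(\ell^2-d-1)-d(\ell-2)\bigr]}{2d}.
\]
Plugging in the value of $g_{\bar L}$ from \cite[Example 5.10]{garcia-stichtenoth-xing2001}, namely $g_{\bar L}=1+\mu_1 d_1(\ell-2)/2$ when $(\ell+1)/(\mu_1 d_1)$ is odd, the term $-\mu_1 d_1(\ell-2)/2$ in the correction cancels against $g_{\bar L}-1$, leaving
\[
g_L=1+\frac{\mu_1 d_1(\ell^2-\ell+1)(\ell^2-d-1)}{2d},
\]
which is the first claimed formula. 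For the even case $g_{\bar L}=1+\mu_1 d_1(\ell-3)/2$, and the same cancellation produces an extra $-\mu_1 d_1/2$, yielding the second formula.

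The only non-trivial point is the appeal to the quoted computations in \cite{garcia-stichtenoth-xing2001} for both the fixed-point behaviour on $\bar{\cO}_2$ and the value of $g_{\bar L}$; everything else is bookkeeping and routine simplification, so I do not expect any substantial obstacle.
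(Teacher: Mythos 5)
Your proposal is correct and follows exactly the paper's route: the paper likewise quotes \cite[Example 5.10]{garcia-stichtenoth-xing2001} for the value of $g_{\bar L}$ and the absence of fixed points in $\bar{\mathcal O}_2$, and then invokes Proposition \ref{dielleprop2} with $L_\Lambda=\Sigma_2$, $|L_\Lambda|=d$, $|L|=d(\ell+1)/(\mu_1 d_1)$. Your algebraic simplification of the correction term and the cancellation against $g_{\bar L}-1$ reproduce both displayed formulas, so there is nothing to add.
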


\subsection{$p\neq 2$, $L=\Sigma_1\times \Sigma_2$, with $\Sigma_1$ as in (c), $\Sigma_2<\Lambda$, $|\Sigma_2|=d$}\label{schleck}
As $C_{d_1}^{(1)}\times C_{d_1}^{(2)}$ is a normal subgroup of $\Sigma_1$,   the subgroup $\pi(\Upsilon_2)$ acts on the quotient curve $\cH/\pi(C_{d_1}^{(1)}\times C_{d_1}^{(1)})$.
Such action is equivalent to the action of the involutory projectivity defined by $U_2$ on the plane curve $\cE$ with equation $X^{\frac{\ell+1}{ d_1}}+Y^{\frac{\ell+1}{d_1}}+T^{\frac{\ell+1}{d_1}}=0$. The fixed points of $U_2$ on $\cE$ are the points on the line $X=T$, together with $(-1,0,1)$ if $(\ell+1)/d_1$ is odd. It is straightforward to check that any point of $\cH$ lying over one of those fixed points of $\cX$ is $\fl$-rational.
As $\cE$ is non-singular, the genus ${\bar g}$ of $\cE/U_2$ is given by
$$
\left(\frac{\ell+1}{d_1}-1\right)\left(\frac{\ell+1}{d_1}-2\right)-2=2(2{\bar{g}}-2)+2\lceil(\ell+1)/(2d_1)\rceil,
$$
that is
\begin{equation}\label{contador}
{\bar g}=\frac{((\ell+1)/(d_1)-2)^2}{4},\quad \text{ if } (\ell+1)/ d_1\text{ is even,}
\end{equation}
and
\begin{equation}\label{evans}
{\bar g}=\frac{((\ell+1)/(d_1)-3)((\ell+1)/(d_1)-1)}{4},\quad  \text{ if } (\ell+1)/d_1 \text{ is odd.}
\end{equation}
Note that no point in $\bar {\mathcal O}_2$ is fixed by a non trivial element in $\pi(\Sigma_1)$. Also,
since $\cE/U_2$ is isomorphic to $\cH/\pi(\Sigma_1)$, from Hurwitz's genus formula it follows
$$
\sum_{{\bar h} \in \pi(\Sigma_1), {\bar h}\neq id}|\{\bar P \in \bar {\mathcal O}_1\mid \bar h(\bar P)=\bar P\}|=\ell^2-\ell-2-2d_1^2(2\bar g-2).
$$

By \eqref{HurTame} and Proposition \ref{dielleprop2} the following result holds.
\begin{theorem}\label{thmC3} Assume that $p\neq 2$. Let ${\mu_1}$ be the highest power of $3$ dividing $\ell+1$. 
 For any divisor $d_1$ of $(\ell+1)/{\mu_1}$, and for every divisor $d$ of $\ell^2-\ell+1$, there exists a subgroup $L$ of $\aut(\cX)$ with
$$
g_L=\bar g+\frac{(\ell+1)(\ell^2-1)}{4d_1^2}\left(\frac{\ell^2-\ell+1}{d}-1\right),
$$
with $\bar g$ as in \eqref{contador} for $(\ell+1)/d_1$ even, and with $\bar g$ as in \eqref{evans} for  $(\ell+1)/d_1$ odd.
\end{theorem}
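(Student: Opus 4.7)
The plan is to apply Proposition \ref{dielleprop2} to $L=\Sigma_1\times\Sigma_2$, where $\Sigma_1=(C_{d_1}^{(1)}\times C_{d_1}^{(2)})\rtimes\Upsilon_2$ is the case-(c) subgroup of $\Omega$ and $\Sigma_2$ is the (unique) cyclic subgroup of $\Lambda$ of order $d$. Since $\Sigma_1\subseteq\Omega$ and $\Omega\cap\Lambda=\{id\}$, one has $L_\Lambda=L\cap\Lambda=\Sigma_2$, hence $|L_\Lambda|=d$ and $|L|=2d_1^2 d$; the hypothesis that no non-trivial element of $\bar L=\pi(\Sigma_1)$ fixes a point of $\cH\setminus\bar{\mathcal O}_1$ has already been recorded in the paragraph preceding the theorem. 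The genus $g_{\bar L}$ of $\cH/\bar L$ coincides with the genus $\bar g$ of $\cE/U_2$, because $\pi(C_{d_1}^{(1)}\times C_{d_1}^{(2)})$ is normal in $\bar L$, the intermediate quotient $\cH/\pi(C_{d_1}^{(1)}\times C_{d_1}^{(2)})$ is the smooth plane Fermat-type curve $\cE$ of degree $n:=(\ell+1)/d_1$ (smoothness uses $\gcd(\ell+1,p)=1$), and the residual action of $\pi(\Upsilon_2)$ on $\cE$ is that of $U_2$.

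I would then compute $\bar g$ via the Hurwitz formula applied to the tame double cover $\cE\to\cE/U_2$,
$$2g(\cE)-2=2(2\bar g-2)+|\mathrm{Fix}(U_2)|,\qquad g(\cE)=\binom{n-1}{2}.$$
The main obstacle is the precise count of $|\mathrm{Fix}(U_2)|$. The involution $U_2$ acts on $\mathbb{P}^2$ via $[x:y:t]\mapsto[t:y:x]$, whose geometric fixed locus is the line $X=T$ together with the isolated point $[-1:0:1]$. Intersecting with $\cE$, the line contributes the $n$ solutions of $Y^n=-2X^n$, which yield $n$ distinct points exactly because $p\neq 2$ (this is where the characteristic hypothesis enters), while $[-1:0:1]$ lies on $\cE$ iff $(-1)^n=-1$, i.e.\ iff $n$ is odd. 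Hence $|\mathrm{Fix}(U_2)|$ equals $n$ or $n+1$ according to the parity of $n$, and solving for $\bar g$ recovers \eqref{contador} and \eqref{evans} respectively.

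Finally, substituting $|L|=2d_1^2 d$, $|L_\Lambda|=d$ and $g_{\bar L}=\bar g$ into the formula of Proposition \ref{dielleprop2} produces a correction term whose numerator is $(\ell^3+1)(\ell^2-d-1)-d(\ell^2-\ell-2)$. Using the identity $\ell^3+\ell^2-\ell-1=(\ell+1)(\ell^2-1)$, this numerator factors as $(\ell+1)(\ell^2-1)(\ell^2-\ell+1-d)$, and dividing by $4d_1^2 d$ gives exactly the closed form claimed in the theorem.
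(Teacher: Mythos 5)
Your proposal is correct and follows essentially the same route as the paper: identify the intermediate quotient $\cH/\pi(C_{d_1}^{(1)}\times C_{d_1}^{(2)})$ with the Fermat curve $\cE$, count the fixed points of $U_2$ on $\cE$ (the $n$ points on $X=T$ plus $(-1,0,1)$ when $n=(\ell+1)/d_1$ is odd), apply Hurwitz to the double cover $\cE\to\cE/U_2$ to get \eqref{contador} and \eqref{evans}, and then plug $|L|=2d_1^2d$, $|L_\Lambda|=d$, $g_{\bar L}=\bar g$ into Proposition \ref{dielleprop2}. Your factorization of the numerator via $(\ell+1)(\ell^2-1)(\ell^2-\ell+1)=(\ell^3+1)(\ell^2-1)$ checks out, so the closed form matches the theorem.
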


\subsection{$p\neq 3$, $L=\Sigma_1\times \Sigma_2$, with $\Sigma_1$ as in (d), $\Sigma_2<\Lambda$, $|\Sigma_2|=d$}\label{schleck2}
As $C_{d_1}^{(1)}\times C_{d_1}^{(2)}$ is a normal subgroup of $\Sigma_1$,   the subgroup $\pi(\Upsilon_1)$ acts on the quotient curve $\cH/\pi(C_{d_1}^{(1)}\times C_{d_1}^{(2)})$.
Such action is equivalent to the action of the projectivity defined by $U_1$ on the plane curve $\cE$ with equation $X^{\frac{\ell+1}{ d_1}}+Y^{\frac{\ell+1}{d_1}}+T^{\frac{\ell+1}{d_1}}=0$. Let $f$ be a primitive third root of unity in $\fls$. If 
$3$ does not divide $(\ell+1)/d_1$, then the fixed points of $U_1$ on $\cE$ are precisely $(f,f^2,1)$ and $(f^2,f,1)$; otherwise no point on $\cE$ is fixed by $U_1$.
Arguing as in Section \ref{schleck} we get that
$$
\sum_{{\bar h} \in \pi(\Sigma_1), {\bar h}\neq id}|\{\bar P \in \bar {\mathcal O}_1\mid \bar h(\bar P)=\bar P\}|=\ell^2-\ell-2-3d_1^2(2\bar g-2),
$$
where 
\begin{equation}\label{contador2}
{\bar g}=\frac{((\ell+1)/(d_1)-1)((\ell+1)/(d_1)-2)}{6},\quad \text{ if }3 \nmid (\ell+1)/ d_1,
\end{equation}
and
\begin{equation}\label{evans2}
{\bar g}=\frac{((\ell+1)/(d_1)-1)((\ell+1)/(d_1)-2)+4}{6},\quad  \text{ if } 3 \mid (\ell+1)/d_1.
\end{equation}

By \eqref{HurTame} and Proposition \ref{dielleprop2} the following result holds.
\begin{theorem}\label{thmC4} Assume that $p\neq 3$. Let ${\mu_1}$ be the highest power of $3$ dividing $\ell+1$. 
 For any divisor $d_1$ of $(\ell+1)/{\mu_1}$, and for every divisor $d$ of $\ell^2-\ell+1$, there exists a subgroup $L$ of $\aut(\cX)$ with
$$
g_L=\bar g+\frac{(\ell+1)(\ell^2-1)}{6d_1^2}\left(\frac{\ell^2-\ell+1}{d}-1\right),
$$
with $\bar g$ as in \eqref{contador2} when $3$ does not divide $(\ell+1)/d_1$, and with $\bar g$ as in \eqref{evans2} if  $3\mid (\ell+1)/d_1$.
\end{theorem}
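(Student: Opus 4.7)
\textbf{Proof proposal for Theorem \ref{thmC4}.}

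The plan is to follow the template of Section \ref{schleck}, replacing the role played there by $\Upsilon_2$ with that of $\Upsilon_1$. Since $d_1\mid(\ell+1)/\mu_1$ and $d\mid\ell^2-\ell+1$ are both coprime to $p$, and $p\neq 3$, the whole group $L=\Sigma_1\times\Sigma_2$ of order $3d_1^2 d$ is tame. Moreover, $\Sigma_2\subseteq\Lambda$ together with $\Omega\cap\Lambda=\{id\}$ forces $L_\Lambda=\Sigma_2$, so $|L_\Lambda|=d$ and $\bar L=\pi(\Sigma_1)$. The strategy is to apply Proposition \ref{dielleprop2}; this requires (i) verifying that no non-trivial element of $\bar L$ fixes a point of $\bar{\mathcal O}_2$, and (ii) computing the genus $g_{\bar L}$ of $\cH/\bar L$.

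For step (ii), since $C_{d_1}^{(1)}\times C_{d_1}^{(2)}$ is a normal subgroup of $\Sigma_1$, the action of $\bar L$ on $\cH$ factors through $\langle U_1\rangle$ acting on the quotient $\cH/\pi(C_{d_1}^{(1)}\times C_{d_1}^{(2)})$, and the latter is identified (via the $A_2$-conjugation already used in the section) with the smooth plane curve $\cE:X^{(\ell+1)/d_1}+Y^{(\ell+1)/d_1}+T^{(\ell+1)/d_1}=0$ of genus $g_\cE=\bigl((\ell+1)/d_1-1\bigr)\bigl((\ell+1)/d_1-2\bigr)/2$. The projectivity $U_1$ has order $3$ (coprime to $p$), and its eigenvectors are $(1,1,1)$, $(f,f^2,1)$ and $(f^2,f,1)$ with $f\in\fls$ a primitive third root of unity. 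The first is never on $\cE$ (the sum $3$ is nonzero in characteristic $\neq 3$); the other two lie on $\cE$ if and only if $f^{(\ell+1)/d_1}\neq 1$, that is, iff $3\nmid(\ell+1)/d_1$. Applying Riemann--Hurwitz to the tame degree-$3$ cover $\cE\to\cE/\langle U_1\rangle$, namely
\[
2g_\cE-2=3(2\bar g-2)+2\,|\mathrm{Fix}(U_1)|,
\]
produces exactly the formulas \eqref{contador2} and \eqref{evans2}.

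For step (i), every non-trivial element of $\pi(\Sigma_1)$ is either purely diagonal in the $A_2$-conjugated coordinates, in which case its fixed points on $\cE$ lie on one of the coordinate lines $X=0$, $Y=0$, $T=0$ and have the shape $(0,c,1)$ with $c^{\ell+1}=-1$ (hence $\fls$-rational), or of the form $U_1 D$ or $U_1^2 D$ with $D$ diagonal; in the latter case, $\det(U_1D)=1$ since $U_1D\in\mathrm{SU}$, so the eigenvalues are cube roots of unity in $\fls$ and the resulting fixed points have coordinates that are products of $(\ell+1)$-th roots of unity and third roots of unity in $\fls$, hence are $\fls$-rational. This is the same pattern already used in Section \ref{schleck} for $U_2$, and guarantees that all fixed points on $\cH$ of non-trivial elements of $\bar L$ lie in $\bar{\mathcal O}_1=\cH(\fls)$.

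With both hypotheses of Proposition \ref{dielleprop2} verified, the Proposition yields
\[
g_L=\bar g+\frac{(\ell^3+1)(\ell^2-d-1)-d(\ell^2-\ell-2)}{6\,d_1^2\,d}.
\]
A routine rearrangement, based on the identity $(\ell^3+1)(\ell^2-d-1)-d(\ell^2-\ell-2)=(\ell^2-1)\bigl((\ell^3+1)-d(\ell+1)\bigr)=(\ell+1)(\ell^2-1)(\ell^2-\ell+1-d)$, converts this into the stated expression. The only genuinely delicate step is (i); the generic argument is straightforward, but one must be slightly careful with the $U_1 D$ case to ensure that the fixed-point coordinates really land in $\fls$ rather than in a proper extension, and this is the point on which the proof leans most heavily on the parallel treatment in Section \ref{schleck}.
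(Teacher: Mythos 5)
Your proposal is correct and follows essentially the same route as the paper: reduce to the Fermat curve $\cE$ via the normal subgroup $C_{d_1}^{(1)}\times C_{d_1}^{(2)}$, count the fixed points of the order-$3$ permutation $U_1$ to get $\bar g$ from Riemann--Hurwitz, check that non-trivial elements of $\pi(\Sigma_1)$ fix only $\fls$-rational points of $\cH$, and conclude with Proposition \ref{dielleprop2}. (The paper merely asserts the fixed-point verification "as in Section \ref{schleck}", which you spell out correctly; note only that in that step the relevant incidence condition $c^{\ell+1}=-1$ refers to the Hermitian model of $\cH$, not to $\cE$.)
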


\subsection{$p\neq 2,3$, $L=\Sigma_1\times \Sigma_2$, with $\Sigma_1$ as in (e), $\Sigma_2<\Lambda$, $|\Sigma_2|=d$}
As $C_{d_1}^{(1)}\times C_{d_1}^{(2)}$ is a normal subgroup of $\Sigma_1$,   the subgroup $\pi(\Upsilon)$ acts on the quotient curve $\cH/\pi(C_{d_1}^{(1)}\times C_{d_1}^{(2)})$. Such action is equivalent to that of all permutations of the coordinates $(X,Y,T)$ on the plane curve $\cE$ with equation $X^{\frac{\ell+1}{ d_1}}+Y^{\frac{\ell+1}{d_1}}+T^{\frac{\ell+1}{d_1}}=0$. It is straightforward to check that:
\begin{itemize} 
\item $(X,Y,T)\mapsto (T,Y,X)$ fixes the points on the line $X=T$, together with $(-1,0,1)$ if $(\ell+1)/d_1$ is odd;
\item $(X,Y,T)\mapsto (X,T,Y)$ fixes the points on the line $Y=T$, together with $(0,-1,1)$ if $(\ell+1)/d_1$ is odd;
\item $(X,Y,T)\mapsto (Y,X,T)$ fixes the points on the line $X=Y$, together with $(-1,1,0)$ if $(\ell+1)/d_1$ is odd;
\item $(X,Y,T)\mapsto (Y,T,X)$ fixes the points  $(f,f^2,1)$ and $(f^2,f,1)$ if $3$ does not divide $(\ell+1)/d_1$, and fixes no point if $3\mid (\ell+1)/d_1$;
\item $(X,Y,T)\mapsto (T,X,Y)$ fixes the points  $(f,f^2,1)$ and $(f^2,f,1)$ if $3$ does not divide $(\ell+1)/d_1$, and fixes no point if $3\mid (\ell+1)/d_1$.
\end{itemize}
Therefore,  for the genus $\bar g$ of the quotient curve of $\cH/\pi(C_{d_1}^{(1)}\times C_{d_1}^{(1)})$ with respect to $\pi(\Upsilon)$ it turns out that
\begin{equation}\label{bugno}
\bar g=\frac{1}{12}\left(\left(\frac{\ell+1}{d_1}\right)^2-6\frac{\ell+1}{d_1}+o\right),
\end{equation}
where
$$
o=\begin{cases} 
12 \text{ if } 3\mid (\ell+1)/d_1,2\mid (\ell+1)/d_1,  \\ 
9 \text{ if } 3\mid (\ell+1)/d_1, 2 \nmid (\ell+1)/d_1,\\ 
8 \text{ if } 3 \nmid (\ell+1)/d_1, 2\mid (\ell+1)/d_1,\\ 
5 \text{ if } 3\nmid (\ell+1)/d_1, 2 \nmid (\ell+1)/d_1
\end{cases}.
$$
Arguing as in the preceedings Sections, we obtain the following result as a consequence of  \eqref{HurTame} and Proposition \ref{dielleprop2}.
\begin{theorem}\label{thmC5} Assume that $p\neq 3$. Let ${\mu_1}$ be the highest power of $3$ dividing $\ell+1$. 
 For any divisor $d_1$ of $(\ell+1)/{\mu_1}$, and for every divisor $d$ of $\ell^2-\ell+1$, there exists a subgroup $L$ of $\aut(\cX)$ with
$$
g_L=\bar g+\frac{(\ell+1)(\ell^2-1)}{12d_1^2}\left(\frac{\ell^2-\ell+1}{d}-1\right),
$$
with $\bar g$ as in \eqref{bugno}.
\end{theorem}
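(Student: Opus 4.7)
The plan is to invoke Proposition \ref{dielleprop2}. Because $p\neq 2,3$, both $\Sigma_1$ (of order $6d_1^2$) and $\Sigma_2$ (of order $d$ dividing $\ell^2-\ell+1$) are tame, so $L$ is tame with $|L|=6d_1^2d$ and $|L_\Lambda|=d$.

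I would first verify the hypothesis of Proposition \ref{dielleprop2}, namely that no non-trivial element of $\bar L=\pi(\Sigma_1)$ fixes a point of $\bar{\mathcal O}_2=\cH(\fqs)\setminus\cH(\fls)$. By the remark at the beginning of this section, the action of $\pi(\Omega)$ on $\cH$ is equivalent to the action of $\Omega_2$ on the Fermat-type model $X^{\ell+1}+Y^{\ell+1}+T^{\ell+1}=0$. As in the arguments of Sections \ref{schleck} and \ref{schleck2}, the fixed locus of any non-trivial element of $\pi(\Sigma_1)$ on this model is contained in the three coordinate lines (intersected with the curve) together with, at most, the two points $(f,f^2,1)$ and $(f^2,f,1)$ with $f$ a primitive cube root of unity; all such points give $\fls$-rational points of $\cH$, hence lie in $\bar{\mathcal O}_1$.

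The core step is to compute the genus $g_{\bar L}$ of $\cH/\pi(\Sigma_1)$. Since $C_{d_1}^{(1)}\times C_{d_1}^{(2)}$ is a normal subgroup of $\Sigma_1$, the quotient $\cH/\pi(\Sigma_1)$ is isomorphic to $\cE/\pi(\Upsilon)$, where $\cE$ is the smooth plane curve $X^m+Y^m+T^m=0$ with $m=(\ell+1)/d_1$, and $\pi(\Upsilon)\cong \Sym_3$ acts by permutations of the coordinates. Since $2g_\cE-2=m(m-3)$, the tame Riemann--Hurwitz formula applied to the degree-$6$ Galois cover $\cE\to\cE/\pi(\Upsilon)$ gives
\begin{equation*}
m(m-3)=6(2\bar g-2)+\sum_{\tau\in\pi(\Upsilon),\,\tau\neq id}N_\tau,
\end{equation*}
where $N_\tau$ denotes the number of points of $\cE$ fixed by $\tau$. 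From the bulleted list in the statement, each of the three transpositions contributes $m$ fixed points on its invariant coordinate line together with one extra point precisely when $m$ is odd; each of the two $3$-cycles contributes $2$ fixed points if $3\nmid m$ and none if $3\mid m$. Splitting into the four cases determined by $\gcd(m,2)$ and $\gcd(m,3)$ produces the piecewise formula \eqref{bugno}.

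Finally, substituting $|L|=6d_1^2d$ and $|L_\Lambda|=d$ into Proposition \ref{dielleprop2} yields
\begin{equation*}
g_L=\bar g+\frac{(\ell^3+1)(\ell^2-d-1)-d(\ell^2-\ell-2)}{12\,d_1^2\,d},
\end{equation*}
and the elementary identity
\begin{equation*}
(\ell^3+1)(\ell^2-d-1)-d(\ell^2-\ell-2)=(\ell+1)(\ell^2-1)(\ell^2-\ell+1-d)
\end{equation*}
rewrites this in the form claimed. The main technical step is the case analysis for $\bar g$; the remaining work is either routine polynomial manipulation or a direct appeal to results already available in the paper.
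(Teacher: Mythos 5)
Your proposal is correct and follows essentially the same route as the paper: identify $\cH/\pi(\Sigma_1)$ with $\cE/\pi(\Upsilon)$ via the normal subgroup $C_{d_1}^{(1)}\times C_{d_1}^{(2)}$, compute $\bar g$ by tame Riemann--Hurwitz from the listed fixed-point counts (your case analysis reproduces \eqref{bugno} exactly), and then feed $|L|=6d_1^2d$, $|L_\Lambda|=d$ into Proposition \ref{dielleprop2}; your closing factorization $(\ell^3+1)(\ell^2-d-1)-d(\ell^2-\ell-2)=(\ell+1)(\ell^2-1)(\ell^2-\ell+1-d)$ checks out. The only remark worth making is that you correctly work under $p\neq 2,3$ (as in the subsection heading), which is what tameness of $\Upsilon\cong\Sym_3$ actually requires, even though the theorem as stated only says $p\neq 3$.
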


\begin{remark}
{\rm{When $L=\pi^{-1}(\bar G)$ with $\bar G$ a group of type (C), then by Corollary \ref{diellecor2} the genus  $g_L$ coincides with  $g_{\bar L}$. Some possibilities for $g_{\bar L}$ are determined in \cite{garcia-stichtenoth-xing2001} when either $\bar G$ is isomorphic to $\Sigma_1$ as in cases (a)-(b), or $\bar G$ is isomorphic to $\Upsilon$.
It should be noted that other possibilities for $g_{\bar L}$ are computed here, namely the integers $\bar g$ as in \eqref{contador},\eqref{evans},\eqref{contador2},\eqref{evans2},\eqref{bugno}. To our knowledge these integers provide genera of quotient curves of the Hermitian curve that have not appeared in the literature so far.
}}
\end{remark}

\section{Curves $\cX/L$ with ${\bar L}$ subgroup of a group of type (D)}
In this section we will consider the case where ${\bar L}$ is contained in one of the following subgroups $\bar G_1$ and $\bar G_2$ of $PGU(3,\ell)$ stabilizing the line with equation $X=0$.
\begin{itemize}
\item Let $\Psi_1$ be the subgroup of $GL(3,\ell^2)$ consisting of matrices
$$
M_{a_{1},a_2,a_3,a_4}=
\begin{pmatrix} 1 & 0 & 0 \\
0 & a_1 & a_2 \\
0 & a_3 & a_4
\end{pmatrix}
$$
with $a_1^\ell=a_1$, $a_4^\ell=a_4$, $a_3^\ell=-a_3$, $a_2^\ell=-a_2$, $a_1a_4-a_2a_3=1$. Let $\bar G_1$ be the subgroup of $PGL(3,\ell^2)$ of the projectivities defined by the matrices in $\Psi_1$. Clearly $\bar G_1$ is isomorphic to $\Psi_1$. It is straightforward to check that $\Psi_1$ is a subgroup of $SU(3,\ell)$; in particular,
 $\bar G_1$ is a subgroup of $PGU(3,\ell)$ preserving the line $X=0$. Also, it is easily seen that the map
$$
M_{a_{1},a_2,a_3,a_4}\mapsto \begin{pmatrix}a_1 & \lambda a_2 \\ {a_3}{\lambda^{-1}} & a_4\end{pmatrix}
$$
with $\lambda^{\ell-1}=-1$,
defines a isomorphism of $\Psi_1$ and $SL(2,\ell)$, and the action of $\bar G_1$ on the points of $\cH$ on the line $X=0$ is isomorphic to the action of $SL(2,\ell)$ on the projective line $PG(1,\ell)$.
\item Let $\bar G_2$ be the subgroup of $GL(3,\ell^2)$ generated by the projectivities defined by the matrices
$$
W=\begin{pmatrix}1 & 0 & 0 \\ 0 & 0 & 1 \\0 & 1 & 0\end{pmatrix}, \quad M_a=\begin{pmatrix}a & 0 & 0 \\ 0 & a^{\ell+1} & 0 \\ 0 & 0 & 1\end{pmatrix}
$$
with $a$ ranging over the set of non-zero elements in $\fls$. It is easily seen that $\bar G_2$ is a subgroup of $PGU(3,\ell)$ preserving the line with equation $X=0$, and that $\bar G_2$ is isomorphic to the diedral group of order $2(\ell^2-1)$.

\end{itemize}

\subsection{${\bar L}$ subgroup of $\bar G_1$}
Let $\Omega$ be the subgroup of $PGL(4,\ell^2)$ consisting of matrices ${\tilde M}_{a_1,a_2,a_3,a_4}$, with ${M}_{a_1,a_2,a_3,a_4}\in \Psi_1$. As $\Psi_1$ is contained in $SU(3,\ell)$, we have that $\Omega$ is actually a subgroup of $\Gamma$. It is straightforward to check that $\Omega\cap \Lambda$ is trivial.
Also, $\bar G_1=\pi(\Omega)$ clearly holds.

\begin{lemma} Any non-trivial element in $\bar G_1$ fixes no point of $\cH$ outside the line with equation $X=0$.
\end{lemma}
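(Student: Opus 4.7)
The plan is to suppose, for a contradiction, that some non-trivial $\bar M \in \bar G_1$ coming from $M = M_{a_1,a_2,a_3,a_4} \in \Psi_1$ fixes a point $P \in \cH$ with $X \neq 0$. Normalizing $P=(1:Y:T)$, the fixed-point condition reduces to $(M'-I)(Y,T)^t = 0$, where $M'$ denotes the bottom-right $2\times 2$ block of $M$. If $1$ is not an eigenvalue of $M'$ then $(Y,T)=(0,0)$ and so $P=(1:0:0)$, which does not lie on $\cH$ since $X^{\ell+1}=1\neq 0 = Y^\ell T + Y T^\ell$. Otherwise, since $\det(M')=1$, the characteristic polynomial of $M'$ must be $(\lambda-1)^2$, forcing $a_1+a_4=2$; and as $M\neq I$, the $1$-eigenspace of $M'$ is one-dimensional.

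The crux is to exhibit a non-zero $1$-eigenvector $(y_0,t_0)$ of $M'$ and verify that $Q=(0:y_0:t_0)$ lies on $\cH$, i.e.\ that $y_0^\ell t_0 + y_0 t_0^\ell = 0$. When $a_1\neq 1$ I would take $(y_0,t_0)=(a_2,1-a_1)$: the identity $a_2 a_3 = a_1 a_4 - 1 = -(a_1-1)^2$ confirms it is an eigenvector, and using $a_2^\ell = -a_2$ together with $a_1^\ell = a_1$ one computes directly $y_0^\ell t_0 + y_0 t_0^\ell = -a_2(1-a_1)+a_2(1-a_1)=0$. The remaining case $a_1=a_4=1$ forces $a_2 a_3 = 0$, so a $1$-eigenvector is $(1,0)$ or $(0,1)$ according as $a_3=0$ or $a_2=0$, and both satisfy the Hermitian relation trivially.

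Because $M$ then has $1$ as a triple eigenvalue and the $1$-eigenspace of $M$ on $\K^3$ is spanned by $(1,0,0)$ and $(0,y_0,t_0)$, the fixed points of $\bar M$ in $\PG(2,\K)$ form the projective line joining $(1:0:0)$ and $Q$, parametrized as $(s_1:s_2 y_0:s_2 t_0)$. Substituting into $X^{\ell+1}=Y^\ell T + Y T^\ell$ yields $s_1^{\ell+1}=s_2^{\ell+1}(y_0^\ell t_0+y_0 t_0^\ell)=0$, whence $s_1=0$. Hence the only point of $\cH$ fixed by $\bar M$ is $Q$, which lies on the line $X=0$, contradicting $X(P)\neq 0$. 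The main obstacle is the explicit eigenvector choice and the verification $Q \in \cH$ in the repeated-eigenvalue case; once these are in place, the rest of the argument is elementary linear algebra together with a single polynomial intersection computation along the line of fixed points.
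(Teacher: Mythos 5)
Your proof is correct, but it takes a different route from the paper's. The paper argues directly on the hypothetical fixed point: normalizing $P=(1:y_1:t_1)$ with $t_1=1$, it writes the two linear relations $(a_1-1)y_1+a_2=0$ and $a_3y_1+(a_4-1)=0$, applies the Frobenius map $x\mapsto x^\ell$ to both (using $a_1^\ell=a_1$, $a_4^\ell=a_4$, $a_2^\ell=-a_2$, $a_3^\ell=-a_3$), and then eliminates to get $(a_1-1)(y_1+y_1^\ell)=0$ and $a_3(y_1+y_1^\ell)=0$; since $y_1+y_1^\ell=x_1^{\ell+1}\neq 0$ by the Hermitian equation, this forces $M_{a_1,a_2,a_3,a_4}=I_3$, i.e.\ the element is trivial. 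You instead analyze the eigenstructure of $M$: you show that if $1$ is an eigenvalue of the lower block $M'$ then it is a double eigenvalue, exhibit the explicit eigenvector $(a_2,1-a_1)$ (or $(1,0)$, $(0,1)$ in the unipotent subcase), verify via the same semilinear conditions that the corresponding point $Q$ lies on $\cH\cap\{X=0\}$, and then intersect the whole line of fixed points with $\cH$ to see that $Q$ is its only point of $\cH$. Both arguments rest on exactly the same Frobenius-compatibility of the entries of $\Psi_1$ and on the shape of the Hermitian form; the paper's version is shorter and avoids case distinctions, while yours yields strictly more information, namely a complete description of the fixed locus of every non-trivial element of $\bar G_1$ (a line meeting $\cH$ in the single point $Q$ on $X=0$, or a triangle with vertices $(1:0:0)$ and two points of $\cH(\fls)$ on $X=0$). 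All the auxiliary claims you use — that $a_1\neq 1$ forces $a_2\neq 0$ so the chosen eigenvector is nonzero, that $M'\neq I$ makes the $1$-eigenspace one-dimensional, and that $(1-a_1)^\ell=1-a_1$ — check out, so there is no gap.
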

\begin{proof} Assume that ${M}_{a_1,a_2,a_3,a_4}(x_1,y_1,t_1)^t=\varrho (x_1,y_1,t_1)^t$ for some $\varrho \in {\mathbb K}$,  $\varrho\neq 0$, and for some   $(x_1,y_1,t_1)\in {\mathbb K}^3$ with $x_1\neq 0$ and $x_1^{\ell+1}=y_1^\ell t_1+ y_1 t_1^\ell$. Clearly this can only happen for $\varrho=1$ and $y_1t_1\neq 0$, whence we can assume that $t_1=1$ and that
$$
(a_1-1)y_1+a_2=0,\qquad a_3y_1+(a_4-1)=0.
$$
Taking $\ell$-th powers in both equalities we then have
$$
(a_1-1)y_1^\ell-a_2=0,\qquad -a_3y_1^\ell+(a_4-1)=0.
$$
It is then straightforward to deduce that $a_1=1$, $a_2=0$, $a_3=0$, $a_4=1$, that is ${M}_{a_1,a_2,a_3,a_4}=I_3$.
\end{proof}

Assume that $L$ is a tame subgroup of $\aut(\cX)$ with $L=\Sigma_1\times \Sigma_2$, where $\Sigma_1 <\Omega$ and $\Sigma_2<\Lambda$. Let $d=|\Sigma_2|$. By Proposition \ref{dielleprop2} we then have that
\begin{equation}\label{mosn}
g_L=g_{\bar L}+\frac{(\ell^3+1)(\ell^2-d-1)-d(\ell^2-d-2)}{2d|\Sigma_1|}
\end{equation}
where $g_{\bar L}$ is the genus of the quotient curve $\cH/\pi(\Sigma_1)$.

In \cite{cossidente-korchmaros-torres2000} a number of genera of quotient curves $\cH/\pi(\Sigma_1)$ with $\Sigma_1$ a subgroup of $\Omega$ are computed.
The following results are then  straightforward consequences of \eqref{mosn} together with Proposition 3.3 in \cite{cossidente-korchmaros-torres2000}.
\begin{theorem}\label{7punto1} Assume that $p=2$. 

Let $d$ be any divisor of $\ell^2-\ell+1$. Then there exist subgroups $L$ of $\aut(\cX)$ with the following properties.
\begin{itemize}
\item For any $e\mid \ell+1$, $|L|=de$ and
$$
g_L=\frac{1}{2e}\left(\frac{\ell^3+1}{d}(\ell^2-d-1)\right)+1.
$$


\item For any $e\mid \ell-1$, $|L|=de$ and
$$
g_L=\frac{1}{2e}\left(\frac{\ell^3+1}{d}(\ell^2-d-1)-2(e-1)\right)+1.
$$


\end{itemize}

\end{theorem}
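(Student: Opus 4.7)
The plan is to construct $L$ as a direct product $\Sigma_1\times\Sigma_2$, with $\Sigma_1$ a cyclic subgroup of $\Omega$ of order $e$ and $\Sigma_2$ a subgroup of $\Lambda$ of order $d$, and then invoke Proposition~\ref{dielleprop2} together with the known genus of the corresponding quotient of the Hermitian curve. Since $\Omega$ is isomorphic to $\Psi_1\cong SL(2,\ell)$ and $\ell=2^h$, we have $SL(2,\ell)=PSL(2,\ell)$, so its cyclic subgroups include a cyclic subgroup (nonsplit torus) of every order $e\mid\ell+1$ and a cyclic subgroup (split torus) of every order $e\mid\ell-1$. Tameness is immediate: $d\mid\ell^2-\ell+1$ and $e\mid\ell\pm1$ are all odd for $\ell$ even, so $p=2\nmid|L|=de$. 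Also, by the preceding lemma, any non-trivial element of $\bar L\subset\bar G_1$ fixes no point of $\cH$ outside the line $X=0$, and the $\ell+1$ points of $\cH$ on that line all lie in $\bar{\mathcal O}_1=\cH(\fls)$; hence the hypothesis of Proposition~\ref{dielleprop2} is met.

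Applying Proposition~\ref{dielleprop2} yields
\begin{equation}\label{gLplan}
g_L=g_{\bar L}+\frac{(\ell^3+1)(\ell^2-d-1)-d(\ell^2-\ell-2)}{2de},
\end{equation}
so the problem reduces to computing the genus $g_{\bar L}$ of $\cH/\pi(\Sigma_1)$ in each of the two cases. This is the content of Proposition~3.3 of \cite{cossidente-korchmaros-torres2000}, but it can also be read directly off the action of $\pi(\Sigma_1)$ on the $\ell+1$ points of $\cH$ lying on $X=0$, via the identification of $\Psi_1$ with $SL(2,\ell)$ acting on $\PG(1,\ell)$. For $e\mid\ell+1$ a non-trivial element of the nonsplit torus has no fixed point on $\PG(1,\ell)$, so $\pi(\Sigma_1)$ acts freely on $\cH$ and Hurwitz gives $g_{\bar L}=\frac{(\ell+1)(\ell-2)}{2e}+1$. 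For $e\mid\ell-1$ each non-trivial element of the split torus fixes exactly two points of $\PG(1,\ell)$, which are common fixed points of the whole cyclic group; the tame Hurwitz formula then yields $g_{\bar L}=\frac{\ell(\ell-1)}{2e}$.

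Substituting these two values into \eqref{gLplan} and simplifying produces exactly the two formulas claimed in the theorem. The simplification is routine: in the first case the terms $d(\ell+1)(\ell-2)=d(\ell^2-\ell-2)$ coming from $g_{\bar L}$ and from the numerator of \eqref{gLplan} cancel, leaving $1+\frac{(\ell^3+1)(\ell^2-d-1)}{2de}$; in the second case $d\ell(\ell-1)$ cancels $d(\ell^2-\ell)$ leaving a residual $+\frac{1}{e}=1-\frac{e-1}{e}$, which combines to the stated formula. The only point requiring some care is to justify that the cyclic subgroups of orders $e\mid\ell\pm1$ indeed correspond to the two torus types inside $SL(2,\ell)$ with the fixed-point behaviour just described; this is a standard fact about $SL(2,\ell)$ acting on $\PG(1,\ell)$ and is, as far as I can see, the only non-trivial ingredient besides Proposition~\ref{dielleprop2} itself.
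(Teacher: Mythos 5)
Your proposal is correct and takes essentially the same route as the paper: $L=\Sigma_1\times\Sigma_2$ with $\Sigma_1$ a cyclic (torus) subgroup of $\Omega\cong SL(2,\ell)$ and $\Sigma_2<\Lambda$ of order $d$, the lemma confining fixed points of $\bar G_1$ to the $\ell+1$ points of $\cH$ on the line $X=0$ (all in $\bar{\mathcal O}_1$), and then Proposition \ref{dielleprop2}. The only differences are that you rederive the two Hermitian quotient genera $g_{\bar L}$ directly from the fixed-point behaviour of the split and nonsplit tori on $\PG(1,\ell)$ where the paper simply cites Proposition 3.3 of \cite{cossidente-korchmaros-torres2000}, and that your intermediate formula is the correct specialization of Proposition \ref{dielleprop2} (the paper's display \eqref{mosn} misprints the term $d(\ell^2-\ell-2)$ as $d(\ell^2-d-2)$, though its final statements agree with your computation).
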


\begin{theorem}\label{7punto2} Assume that $p\neq 2$. Let $d$ be any divisor of $\ell^2-\ell+1$. Then there exist subgroups $L$ of $\aut(\cX)$ with the following properties.
%
%

\begin{itemize}

\item For any divisor $e$ of $(\ell+1)/2$,  $|L|=2de$ and
$$
g_L=\frac{1}{4e}\left(\frac{\ell^3+1}{d}(\ell^2-d-1)-(\ell+1)\right)+1.
$$

\item For any divisor $e$ of $(\ell+1)/2$,
$|L|=4de$ and
$$
g_L=\frac{1}{8e}\left(\frac{\ell^3+1}{d}(\ell^2-d-1)-(\ell+1)-2o\right)+1,
$$
with 
$$
o=\begin{cases}
2e & \text{ if } \ell \equiv 1 \pmod 4 \\
0 & \text{ if } \ell \equiv 3 \pmod 4 \\
\end{cases}.
$$

\item For any divisor $e$ or $(\ell-1)/2$, 
$|L|=2de$ and
$$
g_L=\frac{1}{4e}\left(\frac{\ell^3+1}{d}(\ell^2-d-1)-(\ell+1)-4e+4\right)+1.
$$

\item For any divisor $e$ or $(\ell-1)/2$, $|L|=4de$ and
$$
g_L=\frac{1}{8e}\left(\frac{\ell^3+1}{d}(\ell^2-d-1)-(\ell+1)-2o\right)+1
$$
with 
$$
o=\begin{cases}
4e-2 & \text{ if } \ell \equiv 1 \pmod 4 \\
2e-2 & \text{ if } \ell \equiv 3 \pmod 4 \\
\end{cases}.
$$

\item When $p\ge 5$, $\ell^2\equiv 1\pmod {16}$,  $|L|=48d$ and
$$
g_L=\frac{1}{96}\left(\frac{\ell^3+1}{d}(\ell^2-d-1)-(\ell+1)-2o\right)+1,
$$
with 
$$
o=\begin{cases}
46 & \text{ if } \ell \equiv 1 \pmod 4 \text{ and } \ell \equiv 1 \pmod 3\\
30 & \text{ if } \ell \equiv 1 \pmod 4 \text{ and } \ell \equiv 2 \pmod 3\\
16 & \text{ if } \ell \equiv 3 \pmod 4 \text{ and } \ell \equiv 1 \pmod 3\\
0 & \text{ if } \ell \equiv 3 \pmod 4 \text{ and } \ell \equiv 2 \pmod 3
\end{cases}.
$$

\item When $p\ge 5$,  $|L|=24d$ and
$$
g_L=\frac{1}{48}\left(\frac{\ell^3+1}{d}(\ell^2-d-1)-(\ell+1)-2o\right)+1,
$$
with 
$$
o=\begin{cases}
22 & \text{ if } \ell \equiv 1 \pmod 4 \text{ and } \ell \equiv 1 \pmod 3\\
6 & \text{ if } \ell \equiv 1 \pmod 4 \text{ and } \ell \equiv 2 \pmod 3\\
16 & \text{ if } \ell \equiv 3 \pmod 4 \text{ and } \ell \equiv 1 \pmod 3\\
0 & \text{ if } \ell \equiv 3 \pmod 4 \text{ and } \ell \equiv 2 \pmod 3
\end{cases}.
$$

\item When $p\ge 7$, $\ell^2\equiv 1 \pmod 5$, $|L|=120d$ and
$$
g_L=\frac{1}{240}\left(\frac{\ell^3+1}{d}(\ell^2-d-1)-(\ell+1)-2o\right)+1
$$
with
$$
o=\begin{cases}
118 & \text{ if } \ell \equiv 1\pmod 3, \,\ell \equiv 1 \pmod 4 \text{ and } \ell \equiv 1 \pmod 5\\
78 & \text{ if } \ell \equiv 2\pmod 3,\,\ell \equiv 1 \pmod 4 \text{ and } \ell \equiv 1 \pmod 5\\
78 & \text{ if } \ell \equiv 1\pmod 3,\,\ell \equiv 3 \pmod 4 \text{ and } \ell \equiv 1 \pmod 5\\
48 & \text{ if } \ell \equiv 2\pmod 3,\,\ell \equiv 3 \pmod 4 \text{ and } \ell \equiv 1 \pmod 5\\
70 & \text{ if } \ell \equiv 1\pmod 3,\,\ell \equiv 1 \pmod 4 \text{ and } \ell \equiv 4 \pmod 5\\
30 & \text{ if } \ell \equiv 2\pmod 3,\,\ell \equiv 1 \pmod 4 \text{ and } \ell \equiv 4 \pmod 5\\
40 & \text{ if } \ell \equiv 1\pmod 3,\,\ell \equiv 3 \pmod 4 \text{ and } \ell \equiv 4 \pmod 5\\
0 & \text{ if } \ell \equiv 2\pmod 3,\,\ell \equiv 3 \pmod 4 \text{ and } \ell \equiv 4 \pmod 5
\end{cases}.
$$
\end{itemize}
\end{theorem}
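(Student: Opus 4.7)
The plan is to reduce the whole statement to formula \eqref{mosn} by the same mechanism used in the previous theorem. Concretely, since $\bar L=\pi(\Sigma_1)$ is a subgroup of $\bar G_{1}\cong \mathrm{SL}(2,\ell)$, and the lemma preceding Theorem \ref{7punto1} shows that no non-trivial element of $\bar G_{1}$ fixes a point of $\cH$ lying outside the line $X=0$, all such fixed points lie on $\cH\cap\{X=0\}$, which is contained in $\bar{\mathcal O}_1$. Therefore the hypothesis of Proposition \ref{dielleprop2} is satisfied, and \eqref{mosn} applies for every choice of tame $L=\Sigma_1\times \Sigma_2$ with $\Sigma_1<\Omega$, $\Sigma_2<\Lambda$, $|\Sigma_2|=d$. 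The task thus reduces to identifying, for each case in the statement, a subgroup $\Sigma_1<\Omega\cong\mathrm{SL}(2,\ell)$ of the prescribed order and reading off the genus $g_{\bar L}$ of the associated quotient $\cH/\pi(\Sigma_1)$.

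The classification of subgroups of $\mathrm{SL}(2,\ell)$ for odd $\ell$ is classical (Dickson): up to conjugacy the tame subgroups are the cyclic groups $C_{2e}$ contained in a non-split torus (with $e\mid (\ell+1)/2$), the cyclic groups $C_{2e}$ contained in a split torus (with $e\mid (\ell-1)/2$), their normalizers (dihedral-type groups of order $4e$), the binary tetrahedral group $2.A_{4}=\mathrm{SL}(2,3)$ of order $24$ (which embeds whenever $p\ge 5$), the binary octahedral group $2.S_{4}$ of order $48$ (which embeds iff $\ell^{2}\equiv 1\pmod{16}$), and the binary icosahedral group $2.A_{5}=\mathrm{SL}(2,5)$ of order $120$ (which embeds iff $\ell^{2}\equiv 1\pmod 5$). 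These seven families are precisely the seven cases enumerated in the statement, and their orders $|\Sigma_{1}|\in\{2e,4e,2e,4e,48,24,120\}$ explain the $|L|=d|\Sigma_{1}|$ factors.

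For each such subgroup I would then invoke Proposition 3.3 of \cite{cossidente-korchmaros-torres2000}, where the genus $g_{\bar L}$ of the quotient $\cH/\pi(\Sigma_{1})$ is explicitly computed in each of these cases. Substituting $g_{\bar L}$ and $|\Sigma_{1}|$ into \eqref{mosn} gives, after elementary manipulation,
\begin{equation*}
g_{L}=g_{\bar L}+\frac{1}{2d|\Sigma_{1}|}\bigl((\ell^{3}+1)(\ell^{2}-d-1)-d(\ell^{2}-d-2)\bigr),
\end{equation*}
which matches each of the seven formulas in the statement once the values of $g_{\bar L}$ are rewritten in the common form $\tfrac{1}{|\Sigma_{1}|}\bigl(\text{main term}-(\ell+1)-2o\bigr)+1$ (the $(\ell+1)$ coming from the ramification above the $\ell+1$ points of $\cH\cap\{X=0\}$, and $o$ encoding the additional fixed points of the polyhedral/dihedral action). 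The cyclic and dihedral cases (1)--(4) are the unconditional ones; the three exceptional cases (5)--(7) require the congruence conditions on $\ell$ both for the existence of the subgroup in $\mathrm{SL}(2,\ell)$ and for the evaluation of $o$.

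The main obstacle is bookkeeping the ramification data in cases (5)--(7): the values of $o$ depend on the residues of $\ell$ modulo $3$, $4$ and $5$ because those residues determine whether the stabilizers of order $3$, $4$, $5$ in the binary polyhedral groups fix points of $\cH\cap\{X=0\}$ or not, and correspondingly whether the corresponding short orbits on the projective line $\mathrm{PG}(1,\ell)$ are already present. Once the fixed-point counts for the elements of order $3$, $4$, $5$ in $2.A_{4}$, $2.S_{4}$, $2.A_{5}$ acting on $\mathrm{PG}(1,\ell)$ are tabulated according to the congruence classes of $\ell$, the Riemann--Hurwitz formula for the covering $\cH\to\cH/\pi(\Sigma_{1})$ produces exactly the piecewise values of $o$ listed in the statement, and substitution into \eqref{mosn} concludes the proof.
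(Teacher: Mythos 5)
Your proposal follows essentially the same route as the paper: Section 7.1 establishes that no non-trivial element of $\bar G_1$ fixes a point of $\cH$ off the line $X=0$ (whose $\ell+1$ points of $\cH$ all lie in $\bar{\mathcal O}_1$), so Proposition \ref{dielleprop2} yields \eqref{mosn}, and the theorem is then obtained by substituting the genera $g_{\bar L}$ from Proposition 3.3 of \cite{cossidente-korchmaros-torres2000} for the tame subgroups of $\Omega\cong\mathrm{SL}(2,\ell)$ (cyclic in split/non-split tori, their index-two overgroups, and the binary polyhedral groups), exactly as you outline. Your identification of the seven families and of the existence conditions matches the paper's implicit use of Dickson's classification, so the proposal is correct and not a genuinely different argument.
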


\begin{remark}
\rm{It should be noted that by the discussion in \cite[Sect. 3]{cossidente-korchmaros-torres2000}, Theorems \ref{7punto1} and \ref{7punto2} describe  genera $g_L$ for all tame subgroups $L=\Sigma_1\times \Sigma_2$ such that $\Sigma_1$ is a subgroup of  $\Omega$ containing the diagonal matrix $J=[1,-1,1,-1]$, and $\Sigma_2<\Lambda$.}
\end{remark}

\subsection{${\bar L}$ subgroup of $\bar G_2$}
We determine a subgroup $\Omega$ of $\Gamma$ such that $\pi(\Omega)$ is contained in $\bar G_2$ and $\Omega\cap \Lambda=\{id\}$ holds.
Let ${\mu_1}$ be the maximum power of $3$ dividing $\ell+1$, and let $\gamma$ be a primitive $((\ell^2-1)/{\mu_1})$-th root of unity in $\fls$. Let $\Omega$ the group generated by the projectivities defined by the matrices $V_\gamma=[\gamma^{2-\ell},\gamma^{\ell+1},\gamma, 1]$ and 
$${\bar W}
=\begin{pmatrix}
-1 & 0 & 0 & 0\\
0 & 0 & 0 & 1\\
0 & 0 & 1 & 0\\
0 & 1 & 0 & 0
\end{pmatrix}.
$$
\begin{lemma} The group $\pi(\Omega)$ is contained in $\bar G_2$.
\end{lemma}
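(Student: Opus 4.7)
The plan is to reduce the containment to generators and carry out two short matrix calculations. Recall that the projection $\pi\colon \aut(\cX)\to PGU(3,\ell)$ is induced by the quotient $\cX/\Lambda\cong \cH$ via $\phi(1:x:y:z)=(1:x:y)$, so concretely $\pi$ sends a projectivity that preserves the plane $Z=0$ to the projectivity of $PG(2,\ell^2)$ obtained by discarding the third coordinate. Since $V_\gamma$ is diagonal, and $\bar W$ has $(0,0,1,0)$ as its third row and as its third column, both generators of $\Omega$ preserve $Z=0$, so $\pi$ applied to them is just the restriction of their action to $(X,Y,T)$. Because $\pi$ is a group homomorphism, it will suffice to show that the images of $V_\gamma$ and $\bar W$ lie in $\bar G_2=\langle W, M_a\mid a\in\fls^\times\rangle$.

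For $V_\gamma$, the restriction to $(X,Y,T)$ is the diagonal projectivity $\mathrm{diag}(\gamma^{2-\ell},\gamma^{\ell+1},1)$. Setting $a=\gamma^{2-\ell}\in\fls^\times$, the candidate element $M_a\in \bar G_2$ equals $\mathrm{diag}(\gamma^{2-\ell},\gamma^{(2-\ell)(\ell+1)},1)$, so the identification $\pi(V_\gamma)=M_{\gamma^{2-\ell}}$ reduces to checking
\[
\gamma^{(2-\ell)(\ell+1)}=\gamma^{\ell+1}, \quad\text{i.e.,}\quad \gamma^{1-\ell^2}=1.
\]
This is immediate because $\gamma$ has order $(\ell^2-1)/\mu_1$, which divides $\ell^2-1$. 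Hence $\pi(V_\gamma)=M_{\gamma^{2-\ell}}\in\bar G_2$.

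For $\bar W$, a direct computation shows that the restriction to $(X,Y,T)$ is the projectivity with matrix $\mathrm{diag}(-1,1,1)\cdot W$. When $p=2$, we have $-1=1$ in $\fls$ so this is just $W\in\bar G_2$. When $p$ is odd, $\ell$ is odd, hence $(-1)^{\ell+1}=1$, so $\mathrm{diag}(-1,1,1)=M_{-1}$ and $\pi(\bar W)=M_{-1}W\in\bar G_2$. In either characteristic both generators are sent into $\bar G_2$, and since $\pi$ is a group homomorphism, $\pi(\Omega)\subseteq\bar G_2$, as required. There is no genuine obstacle to this lemma: the only bookkeeping is the congruence $(2-\ell)(\ell+1)\equiv \ell+1$ modulo the order of $\gamma$, and the characteristic-two case for $\bar W$.
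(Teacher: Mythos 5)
Your proof is correct and follows essentially the same route as the paper: verify the containment on the two generators, identify $\pi(V_\gamma)$ with $M_{\gamma^{2-\ell}}$ via the congruence $(2-\ell)(\ell+1)\equiv \ell+1 \pmod{\ell^2-1}$, and write the image of $\bar W$ as $M_{-1}$ times $W$. The only (harmless) difference is that you make the $p=2$ case explicit, which the paper leaves implicit.
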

\begin{proof} The image by $\pi$ of the projectivity defined by $V_\gamma$ is associated to the matrix
$$
\begin{pmatrix}\gamma^{2-\ell} & 0 & 0 \\ 0 & \gamma^{\ell+1} & 0 \\ 0 & 0 & 1\end{pmatrix}.
$$
This is the matrix $M_a$ for $a=\gamma^{2-\ell}$, as $(\gamma^{2-\ell})^{\ell+1}=\gamma^{-(\ell^2-1-\ell-1)}=\gamma^{\ell+1}$.

The image by $\pi$ of the projectivity defined by $\bar W$ is  associated to the matrix
$$
W'=\begin{pmatrix}-1 & 0 & 0 \\ 0 &  0 & 1 \\ 0 & 1 & 0\end{pmatrix}.
$$
Since for $a=-1$ we have $M_{a}W'=W$, this projectivity belong to $\bar G_2$, and the proof is complete. 
\end{proof}

\begin{lemma} The intersection $\Omega\cap \Lambda$ is trivial.
\end{lemma}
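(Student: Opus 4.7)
The plan is to put every element of $\Omega$ into a normal form and then check when such an element can coincide with an element of $\Lambda$. Since a direct calculation shows $\bar{W}^{2}=I$, the same calculation (applied in the basis $e_{1},e_{2},e_{3},e_{4}$, noting that $\bar{W}$ sends $e_{1}\mapsto-e_{1}$, $e_{2}\leftrightarrow e_{4}$, $e_{3}\mapsto e_{3}$) gives
$$V_\gamma':=\bar{W}V_\gamma \bar{W}^{-1}=\diag(\gamma^{2-\ell},\,1,\,\gamma,\,\gamma^{\ell+1}),$$
which is diagonal and therefore commutes with $V_\gamma$. Consequently every element of $\Omega=\langle V_\gamma,\bar{W}\rangle$ can be written in the form $V_\gamma^{A}V_\gamma'^{B}\bar{W}^{\epsilon}$ with $A,B\in\ZZ$ and $\epsilon\in\{0,1\}$.

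If $\epsilon=1$, then $V_\gamma^{A}V_\gamma'^{B}\bar{W}$ has its nonzero entries exactly in the four positions $(1,1),(2,4),(3,3),(4,2)$; in particular the off-diagonal entries in positions $(2,4)$ and $(4,2)$ are nonzero. Such a matrix is not proportional to any element of $\Lambda$, since $\Lambda$ consists of diagonal matrices of the shape $\diag(u,u,1,u)$. This rules out the case $\epsilon=1$.

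When $\epsilon=0$ the element is
$$V_\gamma^{A}V_\gamma'^{B}=\diag\bigl(\gamma^{(A+B)(2-\ell)},\,\gamma^{A(\ell+1)},\,\gamma^{A+B},\,\gamma^{B(\ell+1)}\bigr),$$
and projective equality with $\diag(u,u,1,u)$, with $u^{\ell^{2}-\ell+1}=1$, forces (after dividing by the third diagonal entry) the identifications $u=\gamma^{(A+B)(1-\ell)}$ together with two congruences modulo $N:=(\ell^{2}-1)/\mu_{1}$, the order of $\gamma$. Taking their sum and difference yields
$$3(A+B)(\ell-1)\equiv 0\pmod{N}\qquad\text{and}\qquad (A-B)(\ell+1)\equiv 0\pmod{N}.$$

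The key arithmetic point, which I see as the main obstacle, is that $\mu_{1}$ is a power of $3$ dividing $\ell+1$, while $3$ cannot divide both $\ell-1$ and $\ell+1$ simultaneously; hence $\gcd(\mu_{1},\ell-1)=1$ and $3$ is coprime to $(\ell+1)/\mu_{1}$. Writing $N=(\ell-1)(\ell+1)/\mu_{1}$, the two congruences above simplify exactly to
$$A+B\equiv 0\pmod{\tfrac{\ell+1}{\mu_{1}}}\qquad\text{and}\qquad A-B\equiv 0\pmod{\ell-1}.$$
The first of these immediately gives $(A+B)(\ell-1)\equiv 0\pmod{N}$, so $u=\gamma^{(A+B)(1-\ell)}=1$. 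Therefore $V_\gamma^{A}V_\gamma'^{B}$ is the identity projectivity, and $\Omega\cap\Lambda=\{id\}$ follows.
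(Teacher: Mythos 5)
Your proof is correct. It follows the same broad strategy as the paper's own argument --- reduce to the diagonal elements of $\Omega$ and do arithmetic on the exponents of $\gamma$ --- but the decisive input is different. The paper tacitly uses that every diagonal element of $\Omega$ is, projectively, a power $V_\gamma^i$ (consistent with your two\--parameter normal form, since $V_\gamma'\equiv V_\gamma^{-\ell}$ modulo scalars because $\gamma^{\ell^2-1}=1$), reads off $u=\gamma^{-i}$ from the last two coordinates, and then invokes the membership condition $u^{\ell^2-\ell+1}=1$ of $\Lambda$ together with $\gcd\bigl(\ell^2-\ell+1,(\ell^2-1)/\mu_1\bigr)=1$ to force $\gamma^i=1$. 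You never use $u^{\ell^2-\ell+1}=1$: instead you exploit all the proportionality conditions coming from the shape $\diag(u,u,1,u)$ and the facts $\gcd\bigl(3,(\ell+1)/\mu_1\bigr)=1$ and $\gcd(\mu_1,\ell-1)=1$. Consequently your argument proves the slightly stronger statement that $\Omega$ meets the full one\--parameter group $\{\diag(u,u,1,u)\}$ trivially, not merely its subgroup $\Lambda$, at the cost of being longer; the paper's route is shorter but leans on the coprimality of $\ell^2-\ell+1$ with the order of $\gamma$. Both the normal\--form claim and each congruence manipulation in your write\--up check out.
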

\begin{proof} Any element $\alpha$ in $\Omega\cap \Lambda$  is defined by a diagonal matrix of $\Omega$, that is $[\gamma^{i(2-\ell)},\gamma^{i(\ell+1)},\gamma^i, 1]$ for some $i$.
Moreover, $\gamma^{i(\ell^2-\ell+1)}=1$. Since $\gamma^{i(\ell^2-1)/{\mu_1}}=1$ and $\gcd(\ell^2-\ell+1,(\ell^2-1)/{\mu_1})=1$, this can only happen $\gamma^i=1$. Then the assertion is proved.
\end{proof}
By the above lemmas, the map $\pi$ defines an isomorphism of $\Omega$  onto the subgroup of $\bar G_2$ generated by the projectivities defined by $W$ and $M_a$, with $a$ an $((\ell^2-1)/{\mu_1})$-th root of unity. A number of genera of quotient curves $\cH/\pi(\Sigma_1)$ for $\Sigma_1$ tame subgroup of $\pi(\Omega)$ have been established in \cite[Thm. 5.4, Ex. 5.6]{garcia-stichtenoth-xing2001}. Taking into accout Proposition \ref{dielleprop2}, we are in a position to compute genera $g_L$ for tame subgroups $L=\Sigma_1\times \Sigma_2$ with $\Sigma_1<\Omega$ and $\Sigma_2<\Lambda$.
\begin{theorem}\label{7punto3}
Assume that $p\neq 2$. Let $\mu_1$ be the highest power of $3$ dividing $\ell+1$. Let $d$ be any divisor of $\ell^2-\ell+1$. Then there exist subgroups $L$ of $\aut(\cX)$ with the following properties.
\begin{itemize}
\item For any divisor $e$ of $(\ell^2-1)/{\mu_1}$, $|L|=2ed$ and
$$
g_L=\frac{1}{4e}\left(\frac{\ell^3+1}{d}(\ell^2-d-1)+ (\ell+1)(1-u-\tilde u)+2(e+u)-\delta\right),
$$
where $u=\gcd(e,\ell+1)$, $\tilde u=\gcd(e,\ell-1)$, 
$$\delta
=\begin{cases}0 & \text{ if } e \text{ divides } (\ell^2-1)/2 \\
e & \text{ otherwise }\end{cases}.
$$

\item When $\ell\equiv 1 \pmod 4$, for any even divisor $e$ of $\ell-1$, $|L|=2ed$ and
$$
g_L=\frac{1}{4e}\left(\frac{(\ell^3+1)}{d}(\ell^2-d-1)-\ell+3\right).
$$

\item When $\ell\equiv 3 \pmod 4$, for any even divisor $e$ of $\ell-1$, $|L|=2ed$ and
$$
g_L=\frac{1}{4e}\left(\frac{(\ell^3+1)}{d}(\ell^2-d-1)-\ell+2e+3\right).
$$

\item When $\ell\equiv 1 \pmod 4$, for any odd divisor $e$ of $\ell-1$, $|L|=2ed$ and
$$
g_L=\frac{1}{4e}\left(\frac{(\ell^3+1)}{d}(\ell^2-d-1)+2\right).
$$

\item When $\ell\equiv 3 \pmod 4$, for any odd divisor $e$ of $\ell-1$, $|L|=2ed$ and
$$
g_L=\frac{1}{4e}\left(\frac{(\ell^3+1)}{d}(\ell^2-d-1)+2e+2\right).
$$

\end{itemize}

\end{theorem}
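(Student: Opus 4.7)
The plan is to derive each of the five formulas by a direct application of Proposition~\ref{dielleprop2} to tame subgroups $L=\Sigma_1\times\Sigma_2$ with $\Sigma_1<\Omega$ and $\Sigma_2<\Lambda$, $|\Sigma_2|=d$. Since $\Omega\cap\Lambda=\{\text{id}\}$, we have $L_\Lambda=\Sigma_2$, so Proposition~\ref{dielleprop2} yields
\[
g_L \;=\; g_{\bar L}\;+\;\frac{(\ell^3+1)(\ell^2-d-1)-d(\ell^2-\ell-2)}{2|L|},
\]
where $\bar L=\pi(\Sigma_1)$ and $g_{\bar L}$ is the genus of $\cH/\bar L$. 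The task thus splits into verifying the hypothesis of Proposition~\ref{dielleprop2}, importing the genera $g_{\bar L}$ from the literature, and performing elementary algebra.

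First I would check that no nontrivial element of $\bar L$ fixes a point of $\cH$ outside $\bar{\mathcal O}_1=\cH(\fls)$. Every element of $\bar G_2$ preserves the non-tangent line $X=0$, which meets $\cH$ in $\ell+1$ points of $\cH(\fls)$. The diagonal generators $M_a$ fix $(0:1:0)$ and $(0:0:1)$ on $\cH$, plus possibly additional $\fls$-rational points of $\cH\cap\{X=0\}$ when $a$ has small order; the involutions of the form $WM_a$ fix points of $\cH$ whose ratios $X^{\ell+1}/Y^{\ell+1}$ are norms from $\fls^*$ to $\fl^*$, hence are again $\fls$-rational. It follows that all fixed points of nontrivial elements of $\pi(\Omega)$ lie in $\bar{\mathcal O}_1$, so Proposition~\ref{dielleprop2} applies.

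Next I would identify the subgroups $\Sigma_1<\Omega$ corresponding to each of the five items. Under $\pi$, the group $\Omega$ is mapped isomorphically onto the subgroup of $\bar G_2$ generated by $W$ and the diagonals $M_a$ with $a^{(\ell^2-1)/\mu_1}=1$. The tame subgroups of $\Omega$ of interest are dihedral: a cyclic part of order $e\mid(\ell^2-1)/\mu_1$ extended by an involution of the form $WM_b$, giving $|\Sigma_1|=2e$ and hence $|L|=2ed$ in each item. The five items in the statement reflect a case distinction based on the parity of $e$ and on the residue $\ell\pmod 4$, which controls whether additional fixed points of the involutory generator arise on $\cH$; these govern the arithmetic invariants $u=\gcd(e,\ell+1)$, $\tilde u=\gcd(e,\ell-1)$, and the indicator $\delta$ for whether $e$ divides $(\ell^2-1)/2$. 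For each of these subgroups \cite[Thm.~5.4, Ex.~5.6]{garcia-stichtenoth-xing2001} supplies an explicit formula for $g_{\bar L}$ in terms of exactly these invariants.

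Substituting these values of $g_{\bar L}$ into the displayed identity and simplifying yields the five closed-form expressions for $g_L$. The main technical obstacle will be the careful bookkeeping of which involutions of $\bar G_2$ belong to $\pi(\Omega)$, given the constraint $a^{(\ell^2-1)/\mu_1}=1$, and how this interacts with the parity of $e$ and the residue of $\ell \pmod 4$ that give rise to the separate items; the algebraic manipulations themselves are routine.
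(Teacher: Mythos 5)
Your proposal follows exactly the paper's route: construct $\Omega<\Gamma$ with $\Omega\cap\Lambda=\{\mathrm{id}\}$ so that $L=\Sigma_1\times\Sigma_2$ has $L_\Lambda=\Sigma_2$ of order $d$, verify that nontrivial elements of $\pi(\Sigma_1)\subset\bar G_2$ fix only $\fls$-rational points of $\cH$, apply Proposition~\ref{dielleprop2}, and import the genera $g_{\bar L}$ from \cite[Thm.~5.4, Ex.~5.6]{garcia-stichtenoth-xing2001}. This is the same argument the paper gives (indeed your explicit check of the fixed-point hypothesis for the reflections $WM_a$ is slightly more careful than the paper's), so the proposal is correct.
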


\begin{remark} \rm{When $L=\pi^{-1}(\bar G)$ with $\bar G$ a group of type (D), then by Corollary \ref{diellecor2} the genus  $g_L$ coincides with  $g_{\bar L}$. As  already mentioned in the present section, a number of possibilities for $g_{\bar L}$ are determined in \cite[Prop. 3.3]{cossidente-korchmaros-torres2000}, and in \cite[Thm. 5.4]{garcia-stichtenoth-xing2001}.}
\end{remark}


\section{The case $\ell=5$}\label{elle5}
To exemplify how the results of this paper provide many new genera for $\fqs$-maximal curves we consider in this section the case $q=5^3$.
Up to our knowledge, the $32$ integers listed in the following table are new values in the spectrum of genera of ${\mathbb F}_{5^6}$-maximal curves.

\begin{center}
New genera of ${\mathbb F}_{5^6}$-maximal curves
 \begin{tabular}{|c|c || c|c |}\hline
$g$ & Ref. & $g$ & Ref.  \\\hline
5 & \ref{7punto3} & 
9 & \ref{thmA1}(i), \ref{7punto3} \\
14& \ref{thmA1}(ii)  &
21& \ref{thmA1}(ii) \\
22& \ref{thmB1}, \ref{thmB3} &
25& \ref{thmC5} \\
27& \ref{7punto3} &
37& \ref{thmC1}, \ref{thmC4}, \ref{7punto2}, \ref{7punto3} \\
38& \ref{thmA1}(i), \ref{thmA1}(ii), \ref{7punto2}, \ref{7punto3} &
70& \ref{thmA1}(ii), \ref{thmB3} \\
73& \ref{thmC5} &
74& \ref{7punto2} \\
76& \ref{thmA1}(i), \ref{thmA1}(ii), \ref{thmC1}, \ref{thmC2}, \ref{thmC3}, \ref{7punto2}, \ref{7punto3}&
77& \ref{7punto3} \\
86& \ref{thmA1}(iv) &
109& \ref{thmC1},  \ref{7punto3}\\
121& \ref{thmC4},  \ref{7punto2} &
140& \ref{thmA1}(ii) \\
148& \ref{thmB3},  \ref{thmC4} &
180& \ref{thmC3},  \ref{7punto3}\\
208& \ref{thmB1} &
220&  \ref{thmA1}(i),  \ref{thmA1}(ii),  \ref{thmC1}, \ref{thmC2}, \ref{thmC3},  \ref{7punto2},  \ref{7punto3}\\
221& \ref{7punto3} &
241&  \ref{thmC5} \\
242& \ref{7punto2} &
282& \ref{thmA1}(iv)\\
361& \ref{thmC1},  \ref{7punto3}&
362&  \ref{thmA1}(i),  \ref{thmA1}(ii),  \ref{7punto2},  \ref{7punto3}\\
442& \ref{thmA1}(iv),  \ref{thmB1},  \ref{thmC1}, \ref{thmC2}&
484& \ref{thmB3},  \ref{thmC4}\\
724&  \ref{thmA1}(i),  \ref{thmA1}(ii),   \ref{thmC1}, \ref{thmC2}, \ref{thmC3}, \ref{7punto2}, \ref{7punto3}&
725& \ref{7punto3}\\ \hline
\end{tabular}
\end{center}

\end{document}